\documentclass[reqno,a4paper,12pt]{amsart}
\usepackage{a4wide}
\usepackage{graphicx}
\usepackage{tikz-cd} 
\title[Quartic del Pezzo surfaces over $\F_p(t)$ without quadratic points]{Quartic del Pezzo surfaces over $\F_p(t)$ without quadratic points}

\author{Giorgio Navone}
\address{Giorgio Navone, Department of Mathematics, University College London, 25 Gordon St, WC1H 0AY, London, United Kingdom }
\email{giorgio.navone.22@ucl.ac.uk}
\urladdr{https://sites.google.com/view/giorgio-navone}

\author{Katerina Santicola}
\address{Katerina Santicola, Department of Mathematical Sciences, University of Bath, Claverton Down, BA2 7AY, Bath, United Kingdom}
\email{ks3056@bath.ac.uk}
\urladdr{https://sites.google.com/view/katerinasanticola}

\author{Harry C. Shaw}
\address{Harry C. Shaw, Department of Mathematical Sciences, University of Bath, Claverton Down, BA2 7AY, Bath, United Kingdom}
\email{hcs50@bath.ac.uk}
\urladdr{https://sites.google.com/view/harry-c-shaw}

\author{Haowen Zhang} \address{Haowen Zhang, Mathematical Institute, Leiden University, Einsteinweg 55, 2333 CC Leiden, The Netherlands} \email{h.zhang@math.leidenuniv.nl} \urladdr{https://hzhang.perso.math.cnrs.fr/}

\date{\today}

\usepackage{color}
\usepackage{mathtools}
\usepackage{musicography}
\usepackage{amsmath}
\definecolor{darkgreen}{rgb}{0,0.5,0}

\usepackage[backref]{hyperref} 
\definecolor{darkred}{rgb}{0.5,0,0}
\definecolor{darkgreen}{rgb}{0,0.5,0}
\definecolor{darkblue}{rgb}{0,0,0.5}
\hypersetup{linkcolor=darkgreen,filecolor=darkgreen,urlcolor=darkgreen,citecolor=darkgreen,colorlinks}
\usepackage{enumitem}

\newcounter{starcond}

\newcommand{\Z}{\mathbb{Z}}
\newcommand{\F}{\mathbb{F}}
\newcommand{\Q}{\mathbb{Q}}

\newcommand{\PP}{\mathbb{P}}

\newcommand{\OO}{\mathcal{O}}
\newcommand{\Proj}{\mathbb{P}}
\newcommand{\oo}{\mathcal{O}}
\usepackage{tcolorbox,amssymb}

\newtheorem{theorem}{Theorem}[section]

\newtheorem{lemma}[theorem]{Lemma}
\newtheorem{corollary}[theorem]{Corollary}

\newtheorem{proposition}[theorem]{Proposition}

\theoremstyle{remark}
\newtheorem{remark}[theorem]{Remark}

\numberwithin{equation}{subsection}

\theoremstyle{definition}
\newtheorem{example}[theorem]{Example}

\usepackage[OT2,T1]{fontenc}
\DeclareSymbolFont{cyrletters}{OT2}{wncyr}{m}{n}
\DeclareMathSymbol{\Sha}{\mathalpha}{cyrletters}{"58}

\DeclareMathOperator{\inv}{inv}

\DeclareMathOperator{\ev}{ev}
\DeclareMathOperator{\Pic}{Pic}

\DeclareMathOperator{\Spec}{Spec}
\DeclareMathOperator{\et}{\acute{e}t}
\DeclareMathOperator{\Br}{Br}
\DeclareMathOperator{\Gal}{Gal}
\DeclareMathOperator{\disc}{disc}

\begin{document}

\maketitle
	
\subsection*{Abstract}
We construct an infinite family of quartic del Pezzo surfaces over $\F_p(t)$ with no quadratic points, for all primes $p\neq 2$. This answers a question of Colliot--Th\'el\`ene, Creutz and Viray in the negative, which asks whether every quartic del Pezzo surface has quadratic points over $C_2$ fields. We exhibit a Brauer--Manin obstruction on the variety parametrising lines associated to the quartic del Pezzo surface.
	
\setcounter{tocdepth}{1}
\tableofcontents

\section{Introduction}

Given a variety $X$ over a field $k$, a fundamental question of arithmetic geometry is whether $X$ has a $k$-rational point. If not, it is natural to ask what is the smallest extension $L/k$ (with respect to degree) such that $X_L$ has an $L$-point?  \par
In this paper we study this question for quartic del Pezzo surfaces over $\F_p(t)$, for $p>2$ a prime. Namely these are the surfaces $X\subseteq \PP^4_{\F_p(t)}$ given by the smooth, complete intersection of two quadrics. Quartic del Pezzo surfaces over local fields (including the $C_2$ fields $\mathbb F_p(\!(t)\!)$) always admit quadratic points (\cite[Theorem 1.2]{CreutzViray2023}). Creutz and Viray (\cite[Page~1412]{CreutzViray2023}) and Colliot--Thélène (\cite[Remarque~3.12]{colliot}) have raised the question of whether every quartic del Pezzo surface over a $C_2$ field admits a quadratic point. We answer this question in the negative by considering the $C_2$ fields $\F_p(t)$:
\begin{theorem}\label{thm:maindp4}
Let $p>2$ be a prime. There exist  infinitely many non-isomorphic quartic del Pezzo surfaces $X$ over $\F_p(t)$ such that $X(K)=\varnothing$ for all quadratic extensions $K/\F_p(t)$. 
\end{theorem}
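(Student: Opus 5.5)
We reduce quadratic points on $X$ to rational points on an auxiliary variety and then obstruct those. The key geometric input (following Creutz--Viray) is this: if $X$ has a point over a quadratic extension $K/k$, with $k=\F_p(t)$, then either $X(k)\neq\varnothing$, or the point together with its Galois conjugate spans a $k$-rational line $\ell\subset\PP^4$. In the second case $\ell\cap X$ contains this degree-$2$ point, so $\ell$ is a $k$-rational secant line of $X$. Such lines are parametrised by a variety attached to the pencil of quadrics. For each smooth quadric $Q_\lambda$ in the pencil, a line meeting $X$ in a length-$2$ subscheme lies in a unique member of the pencil. Our parameter space is therefore the variety $F$ of pairs $(\lambda,\ell)$ with $\ell\subset Q_\lambda$, fibred over $\PP^1_\lambda$. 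Its fibres are the Fano varieties of lines on the quadric threefolds $Q_\lambda$, each a form of $\PP^3$ (the Lagrangian Grassmannian). The splitting of these fibres is controlled by the conic-bundle / Brauer class data of the quaternion algebras attached to $\lambda$ via the degree-$5$ binary form $\det(\lambda Q_1-Q_2)$.

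The plan therefore has three parts.
\begin{enumerate}
\item Show that $X(K)\neq\varnothing$ for some quadratic $K$ forces $F(k)\neq\varnothing$ or $X(k)\neq\varnothing$. Since $X(k)\neq\varnothing$ implies $F(k)\neq\varnothing$ (take a tangent line through the point inside some $Q_\lambda$), it is enough to prove $F(k)=\varnothing$.
\item Choose an explicit family. Diagonal quadrics $Q_1=\sum a_ix_i^2$ and $Q_2=\sum b_ix_i^2$ with $a_i,b_i\in\F_p[t]$ are the natural choice, so that $\det(\lambda Q_1-Q_2)=\prod(\lambda a_i-b_i)$ splits. Each fibre $F_\lambda$ is then a Severi--Brauer-type variety whose class is expressed through quaternion algebras $(\cdot,\cdot)$ built from the $a_i,b_i$ and $\lambda$. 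This gives an element $\mathcal A\in\Br(F)$ (or on an open subset, extended by purity) pulled back from $\Br(k(\lambda))$.
\item Run a Brauer--Manin computation over the places of $\F_p(t)$. Pick $a_i,b_i$ with few irreducible factors, for example involving $t$, $t-1$ and a non-square $\varepsilon\in\F_p^\times$, in the style of Colliot-Th\'el\`ene--Swinnerton-Dyer--Skorobogatov counterexamples. We then show that $\sum_v\inv_v\mathcal A(P_v)$ is identically $\tfrac12$ on $\prod_v F(k_v)$.
\end{enumerate}

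The local evaluation uses the fact that residue fields are finite. The invariant at a place $v$ is determined by a tame symbol, i.e.\ by whether a unit residue is a square. This makes the invariants constant at all places except those dividing $t(t-1)\infty$, where a case analysis on the valuation of $\lambda$ should pin them down. Note that $F(k_v)$ is nonempty at every place, since $X$ has quadratic points locally; so the obstruction genuinely comes from Brauer--Manin and not from a missing local point.

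To obtain infinitely many non-isomorphic examples, we let one coefficient vary, say replacing $t-1$ by $t-c$ or by $t^{n}-c$ for suitable $n$. The Brauer computation then carries through uniformly, and the discriminant form (a binary quintic invariant up to $\mathrm{PGL}_2$ and scaling) gives infinitely many isomorphism classes. The infinitude is immediate once discriminants differ.

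The main obstacle is step 3, and in particular its interaction with step 2. We need a single algebra on $F$ whose local invariants are constant on every $F(k_v)$, including points with $\lambda$ at the degenerate fibres, where $F_\lambda$ is singular and evaluation must be handled via residues. My first attempt is to take $\mathcal A$ to be a quaternion algebra $(\lambda a_0-b_0,\,f(t))$ pulled back from $\PP^1_\lambda$, chosen so that it is unramified on $F$. Unramifiedness follows because over each degenerate point the fibre's components force the residue to become trivial. I would then check constancy place by place.
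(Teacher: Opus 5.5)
Your proposal is a strategy rather than a proof, and it has a genuine gap. The explicit surface, a nonconstant class in $\Br F$, and the place-by-place evaluation are the whole content of the theorem. Each is deferred (``we then show'', ``should pin them down'').

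\textbf{The proposed Brauer class is not in $\Br F$.} First, the class of the generic Severi--Brauer fibre dies on $F$, so it does not itself give an element of $\Br F$. Second, your candidate $\mathcal A=(\lambda a_0-b_0,f(t))$ has residue $f(t)$ both at $\lambda=b_0/a_0$ and at $\lambda=\infty$.
\begin{enumerate}
\item When all $a_i\neq 0$, $\prod_i(\lambda a_i-b_i)$ has degree $5$, so the fibre over $\infty$ is a smooth, geometrically integral form of $\PP^3$. Hence $\pi^*\mathcal A$ is ramified there unless $f(t)\in k^{\times 2}$, i.e.\ unless $\mathcal A$ is constant.
\item At $\lambda=b_0/a_0$ the two components of the fibre are defined over $k(\sqrt{\epsilon_0})$. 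They kill the residue only if $f(t)\equiv\epsilon_0$ modulo squares; nothing is ``forced''.
\end{enumerate}
Nonconstant classes exist only when the roots of the characteristic polynomial split into two nonempty Galois-stable sets, on each of which $\epsilon$ is a nonsquare of square norm \cite[Proposition~5.1]{CreutzViray2023}. The paper arranges exactly this with $f=(x^2+t)f_3$ and the rational class $\epsilon_2=d\alpha t(t+1)$, giving $\mathcal A=\pi^*(d\alpha t(t+1),x^2+t)$. In your diagonal setting, put $m_{ij}=a_ib_j-a_jb_i$, so $\epsilon_j\equiv\prod_{i\neq j}m_{ij}$. You would have to impose a condition such as $\epsilon_0\epsilon_1\in k^{\times 2}$ and use $(\epsilon_0,(\lambda a_0-b_0)/(\lambda a_1-b_1))$. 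You would also have to make all five quaternary forms $\sum_{i\neq j}m_{ij}x_i^2$ anisotropic over $k$. Each cone in the pencil has a $k$-rational vertex, so any $k$-point on its base gives a $k$-line and hence a quadratic point.

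\textbf{Constancy of invariants away from $t(t-1)\infty$ is unjustified.} The places of bad reduction of $F$ include every prime dividing some $m_{ij}$. These minors generally have irreducible factors other than $t$ and $t-1$, and at such places Proposition~\ref{Gisnice} gives nothing, so you must compute. The problem worsens when you replace $t-1$ by $t^n-c$ to get infinitely many surfaces: each new surface brings new bad primes whose invariants you do not control. So ``the Brauer computation carries through uniformly'' is exactly the step that fails, and over $\F_p$ only finitely many constants $c$ are available.

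This is the main difficulty the paper resolves. It fixes the shape of the pencil and varies only $d=D/(\alpha t(t+1))$, so the only new bad places are those dividing $D$. It then combines Chebotarev's theorem in the geometric extension $L/k$ with function-field quadratic reciprocity to choose infinitely many primes $D$ such that:
\begin{enumerate}
\item $D$ has odd degree and leading coefficient $-1$;
\item $\epsilon_3$ is a square at $D$;
\item $D$ is a square at every prime dividing $\xi(t+1)$;
\item $\alpha D$ is a square at $t$.
\end{enumerate}
With these choices every bad place other than $t$ contributes $0$. At $t$, Lemma~\ref{lemma4} (no $k_t$-lines on $bQ_0+Q_\infty$ for $b\in\OO_t$) confines $\pi(\mathcal G_d(k_t))$ to $v_t(x)>0$, where the invariant is $\tfrac12$. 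Your plan has no analogue of either mechanism.
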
 

The hypothesis $p>2$ is necessary, since by \cite[Theorem~1.2.(3)]{CreutzViray2023} every quartic del Pezzo surface over a global function field of characteristic $2$ has a quadratic point. Furthermore, in \cite[Theorem~1]{creutz2024quartic} the authors proved the existence of quartic del Pezzo surfaces over $\Q$ which admit no quadratic point. In light of these results, we obtain the following:

\begin{corollary}\label{cor:1.2}
Let $p$ be a prime or $0$. Then there exists a quartic del Pezzo surface over a global field of characteristic $p$ not admitting a quadratic point if and only if $p\neq 2$.
\end{corollary}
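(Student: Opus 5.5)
The corollary is an ``if and only if'' in the characteristic $p$, so I would treat the three cases $p=2$, $p>2$ and $p=0$ separately, using Theorem~\ref{thm:maindp4} together with the two cited results from the literature.

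For the ``only if'' direction, suppose $p=2$. Every global field of characteristic $2$ is a global function field of characteristic $2$. By \cite[Theorem~1.2.(3)]{CreutzViray2023}, every quartic del Pezzo surface over such a field has a quadratic point. Hence no surface of the required kind exists when $p=2$.

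For the ``if'' direction, assume $p\neq 2$ and split into two cases.
\begin{itemize}
\item If $p>2$ is a prime, take the global field $\F_p(t)$. Theorem~\ref{thm:maindp4} gives (infinitely many) quartic del Pezzo surfaces $X/\F_p(t)$ with $X(K)=\varnothing$ for every quadratic extension $K/\F_p(t)$.
\item If $p=0$, take the global field $\Q$. By \cite[Theorem~1]{creutz2024quartic} there is a quartic del Pezzo surface over $\Q$ with no quadratic point.
\end{itemize}

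One point needs checking for consistency. A quadratic point on $X$ means a closed point of degree $2$, or equivalently a point over some quadratic extension. A rational point would give a quadratic point over every quadratic extension, and global fields always have quadratic extensions. So the condition in Theorem~\ref{thm:maindp4} is the same as ``no quadratic point'' in the sense used in the corollary, and in particular it also rules out rational points.

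There is no real obstacle here. The only care required is to make sure the cited results use the same notion of quadratic point and the same class of fields as the corollary.
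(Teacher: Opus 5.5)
Your proposal is correct and is essentially the paper's own argument, which combines Theorem~\ref{thm:maindp4} for odd $p$, \cite[Theorem~1]{creutz2024quartic} over $\Q$ for $p=0$, and \cite[Theorem~1.2.(3)]{CreutzViray2023} for the necessity of $p\neq 2$. The phrase calling a ``closed point of degree $2$'' equivalent to a ``point over some quadratic extension'' is slightly loose, since rational points satisfy the latter but not the former; this does not affect the argument, because you also note that $X(K)=\varnothing$ for all quadratic $K$ rules out rational points.
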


For a quartic del Pezzo surface of index 1, the theorems of Amer--Brumer and Springer (\cite{Amer1976, Brumer1978, SPRINGER1956238}) imply the existence of a rational point.  The index of a quartic del Pezzo surface over global fields divides 2, by \cite[Theorem 1.1]{CreutzViray2023} over number fields, and by \cite[Corollary 6.5]{CreutzViray2023} and \cite[Theorem 1.4]{Tian2025LocalGlobal} over global function fields. Therefore Corollary \ref{cor:1.2} shows that, over global fields, index 2 does not always imply the existence of a quadratic point.

By the Amer--Brumer theorem (\cite{Amer1976, Brumer1978}), every quartic del Pezzo surface over a $C_1$ field has a rational point. On the other hand, by \cite[Theorem 7.6]{CreutzViray2023} there exists a quartic del Pezzo surface $X$ over a $C_3$ field of characteristic 0 not admitting a quadratic point. Together with these results, we obtain a complete answer to this question over $C_i$ fields:

\begin{corollary}
Let $i \geq 0$ be an integer. There exists a quartic del Pezzo surface over a $C_i$ field not admitting a quadratic point if and only if $i>1$.
\end{corollary}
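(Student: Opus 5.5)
This corollary is a bookkeeping exercise: for each $i\ge 0$ we match one of the cited results to the relevant $C_i$ fields, using the monotonicity of the $C_i$ property. Recall that a $C_i$ field is also $C_j$ for every $j\ge i$. This holds because a form of degree $d$ in $n>d^j\ge d^i$ variables has $n>d^i$ variables, so it has a nontrivial zero. Consequently, an existence statement for a $C_i$ field automatically gives one for $C_j$ fields with $j\ge i$. A nonexistence statement for all $C_i$ fields automatically gives one for all $C_j$ fields with $j\le i$.

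For the direction ``$i\le 1$ implies no counterexample'', the monotonicity just noted reduces us to showing that every quartic del Pezzo surface over a $C_1$ field has a quadratic point. By the Amer--Brumer theorem \cite{Amer1976, Brumer1978}, every smooth complete intersection of two quadrics in $\PP^4$ over a $C_1$ field has a rational point. A rational point is in particular a quadratic point, so no counterexample exists for $i\in\{0,1\}$. It is worth stating the convention explicitly: a point over $k$ itself counts as a quadratic point, since it yields points over every quadratic extension. This matches the phrasing of Theorem~\ref{thm:maindp4}, which asks for $X(K)=\varnothing$ for all quadratic extensions $K$.

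For the direction ``$i>1$ implies a counterexample exists'', we need one example over some $C_i$ field for each $i\ge 2$. For $i=2$, we use Theorem~\ref{thm:maindp4}. Fix any prime $p>2$. The field $\F_p(t)$ is $C_2$ by the Tsen--Lang theorem, since $\F_p$ is $C_1$ by Chevalley--Warning and a transcendence degree one extension raises the $C_i$ level by one. Theorem~\ref{thm:maindp4} then produces a quartic del Pezzo surface over this field with no quadratic point. For $i\ge 3$, \cite[Theorem 7.6]{CreutzViray2023} supplies a surface over a field $k$ that is $C_3$, hence $C_i$ for all $i\ge 3$ by monotonicity. Alternatively, the $\F_p(t)$ example already covers every $i\ge 2$ by monotonicity, so the citation is not strictly needed.

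The only step with any substance is Theorem~\ref{thm:maindp4} itself, which we take as given. The rest consists of checking that $\F_p(t)$ is $C_2$ and keeping the monotonicity directions straight, so there is no real obstacle.
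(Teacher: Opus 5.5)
Your proposal is correct and follows the paper's own argument: Amer--Brumer covers $i\le 1$ (every $C_0$ field being $C_1$), Theorem~\ref{thm:maindp4} over the $C_2$ field $\F_p(t)$ covers $i=2$, and monotonicity of the $C_i$ property covers $i\ge 3$ (the paper additionally cites \cite[Theorem 7.6]{CreutzViray2023} there). You are right that the $\F_p(t)$ examples already cover every $i\ge 2$ by monotonicity, so that citation is not logically needed.
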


We obtain Theorem~\ref{thm:maindp4} by computing a Brauer--Manin obstruction to the Hasse principle on the variety of lines associated to the quartic del Pezzo surface, similarly to the method of \cite{creutz2024quartic}. Unlike in \cite{creutz2024quartic}, since we work uniformly over all $\F_p(t)$, away from $p=3,7,11$, we are unable to explicitly determine the primes of bad reduction of the surface. In order to circumvent this issue, we utilise an argument involving quadratic reciprocity and Chebotarev's density theorem in the global function field setting. Furthermore, unlike in the number field case, we cannot use a computer search to look for candidate quartic del Pezzo surfaces, so instead we manually construct such surfaces.

\subsection{Outline of the paper}
In Section \ref{sec:prelim} we recall preliminaries on the Brauer--Manin obstruction over global function fields and the arithmetic of quartic del Pezzo surfaces. \par
In Section \ref{section:thevarietyG} we introduce the variety $\mathcal{G}$ associated to the quartic del Pezzo surface, a key object whose rational points correspond to quadratic points on the quartic del Pezzo surface. \par
In Section \ref{section:pneq3} we present an infinite family of quartic del Pezzo surfaces defined over $\F_p(t)$ for $p>3$. We address the case $p=3$ separately in Section \ref{sec:p=3}.\par
In Section \ref{sec:proofoftheorem3} we prove Theorem \ref{thm:maindp4} by combining the arguments from Section \ref{section:pneq3} and \ref{sec:p=3}. \par
In Section \ref{sec:howwechose} we give some background on how we constructed these families.

\subsection*{Acknowledgments}
The authors would like to express their gratitude towards Daniel Loughran and Rachel Newton for their continued support throughout the process of writing the paper. The authors would also like to thank Harmeet Singh for his helpful discussion regarding the wreath product, and thank Brendan Creutz for helpful feedback on an earlier draft. 

The authors were supported by a Heilbronn Focused Research Grant, through the UKRI/EPSRC ‘Additional Funding Programme for Mathematical Sciences’.
The first author was also supported by the Engineering and Physical Sciences Research Council [EP/S021590/1], the EPSRC Centre for Doctoral Training in Geometry and Number Theory (The London School of Geometry and Number Theory), University College London. 
The second author was supported by a doctoral studentship from the Heilbronn Institute
for Mathematical Research, and an EPSRC postdoctoral pathway fellowship. 

\section{Preliminaries}\label{sec:prelim}

\subsection{Notation}

Given a global field $k$, let $\Omega_k$ denote the set of its places and $\OO_k$ its ring of integers. Given a place $\nu\in\Omega_k$, let $k_\nu$ denote the completion of $k$ at $\nu$ and let $\OO_\nu$ denote its ring of integers.\par
Given a field $k$, fix an algebraic closure $\overline{k}$ of $k$. Then given a finite separable extension $L/k$, we let $L^{\Gal}$ denote the Galois closure of $L$ in $\overline{k}$.

If $G$ is an abelian group and $l$ a prime, let $G[l]$ denote the $l$-torsion subgroup and let $G\{l\}$ denote the $l$-primary subgroup.

Let $X$ be a variety over a field $k$. We say $X$ is \textbf{nice} if it is smooth, projective, and geometrically integral. 
    
\subsection{The Brauer--Manin obstruction} Let $k$ be a global field.
Denote by $X(\mathbb{A}_k)$ the set of adelic points on $X$. There is a natural pairing 
\begin{align*}
X(\mathbb{A}_k)\times \Br \left(X\right)&\rightarrow \Q/\Z \\
((x_\nu)_{\nu}, \mathcal{A})&\mapsto \sum_{{\nu}\in\Omega_k}\inv_{\nu}(\ev_\mathcal{A} (x_{\nu}))
\end{align*}
known as the Brauer--Manin pairing, 
where $\ev_\mathcal{A}: X(k_{\nu})\rightarrow \Br \left(k_{\nu}\right)$ is the evaluation map induced by $\mathcal{A}\in\Br \left(X\right)$, and $\inv_{\nu}\colon \Br \left(k_{\nu}\right)\hookrightarrow \Q/\Z$ is the local invariant map from class field theory \cite[Definition 13.1.7]{ColliotThelene2021}. We denote by $X(\mathbb{A}_k
)^{\Br}$ the set of adelic points which are orthogonal to $\Br \left(X\right)$ with respect to the Brauer--Manin pairing, that is
\begin{equation*}
X(\mathbb{A}_k
)^{\Br}\coloneqq \left\{(x_\nu)_{\nu}\in X(\mathbb{A}_k): \sum_{{\nu}\in\Omega_k}\inv_{\nu}(\ev_\mathcal{A} (x_{\nu}))=0 \text{ for all } \mathcal{A}\in \Br(X)\right\}. 
\end{equation*} 
The set of rational points $X(k)$ is contained in $X(\mathbb{A}_k
)^{\Br}$ by the Albert-Brauer-Hasse-Noether exact sequence (\cite[Theorem 13.1.8]{ColliotThelene2021}).
If $X(\mathbb A_k)\neq \varnothing$ but $X(\mathbb A_k)^{\Br}=\varnothing,$ we say that there is a \textbf{Brauer--Manin obstruction to the Hasse principle}.

Now we discuss good reduction of the Brauer--Manin obstruction, especially over global function fields. If $X$ is a nice variety over a global field, then similarly to \cite[Theorem 13.3.15]{ColliotThelene2021} we can produce conditions under which a place of good reduction has constant local evaluation maps. Although \cite[Theorem 13.3.15]{ColliotThelene2021} is presented for number fields, the statement also holds for a global function field $k$ if we assume the characteristic of $k$ is coprime to the order of $\Br \left(X\right)/\Br_0\left(X\right)$. Let
\[
\Br (X)[p']:=\{\mathcal{A}\in \Br(X) \;\big|\; p^{i}\mathcal{A} \neq 0 \text{ for all } i \ge 1   \}=\bigoplus_{\ell \neq p}\Br(X)\{\ell\}.
\]
	
\begin{proposition}
\label{proposition: good reduction at a place imples constant inv map}
Let $k$ be a global field of characteristic $p$. Let $X$ be a nice variety over $k$ with torsion-free $\Pic \overline X$. Suppose $X$ has good reduction at $\nu$. Then for all $\mathcal{A}\in \Br (X)[p']$, the evaluation map $\ev_\mathcal{A}: X(k_{\nu})\rightarrow \Br (k_{\nu})$ is constant.
\end{proposition}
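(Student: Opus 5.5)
We adapt the proof of \cite[Theorem 13.3.15]{ColliotThelene2021} to the function field setting, using the hypothesis that we only consider classes of order prime to $p$. Let $\mathcal{X}\to \Spec \OO_\nu$ be a smooth proper model of $X$ with geometrically integral special fibre $\mathcal{X}_0$ over the residue field $\F$. Since $\Br(X)[p']=\bigoplus_{\ell\neq p}\Br(X)\{\ell\}$ and evaluation is additive, it suffices to treat $\mathcal{A}$ of order $n$ with $p\nmid n$. By Hensel's lemma and properness, $X(k_\nu)=\mathcal{X}(\OO_\nu)$. The plan is to write $\mathcal{A}$, restricted to $X_{k_\nu}$, as the sum of a constant class from $\Br(k_\nu)$ and a class extending to $\Br(\mathcal{X})$. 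The latter evaluates on any $x\in\mathcal{X}(\OO_\nu)$ to an element of $\Br(\OO_\nu)$, which is $0$ because $\Br(\OO_\nu)\cong\Br(\F)=0$ for the finite field $\F$ (and $\OO_\nu$ is henselian). So $\ev_\mathcal{A}$ is then constant.

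To get this decomposition, we use purity for the $n$-torsion Brauer group of the regular scheme $\mathcal{X}$ with $n$ invertible (Gabber's purity, valid in all characteristics for $n$ prime to the residue characteristic). This gives the residue exact sequence
\[
0\to \Br(\mathcal{X})[n]\to \Br(X_{k_\nu})[n]\xrightarrow{\partial} H^1(\mathcal{X}_0,\Z/n)\cdots
\]
after twisting, and $\partial\mathcal{A}\in H^1_{\et}(\mathcal{X}_0,\Q/\Z)[n]$. Write $\overline{\mathcal X}_0$ for the base change to $\overline{\F}$. The Hochschild--Serre sequence gives
\[
0\to H^1(\F,\Q/\Z)\to H^1(\mathcal{X}_0,\Q/\Z)\to H^1(\overline{\mathcal{X}}_0,\Q/\Z)^{\Gal(\overline\F/\F)}.
\]
This reduces the problem to showing $H^1(\overline{\mathcal{X}}_0,\Z/n)=0$, i.e.\ that $\overline{\mathcal X}_0$ has no nontrivial $\mu_n$-torsors for $n$ prime to $p$. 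By Kummer theory this group is controlled by $\Pic(\overline{\mathcal{X}}_0)[n]$ (and $\overline{\F}^\times/n=0$). By specialization of the Néron--Severi group and smooth proper base change for $\ell$-adic cohomology with $\ell\neq p$, the $n$-torsion of $\Pic$ on the special fibre matches that on the geometric generic fibre. The latter is $0$ because $\Pic\overline X$ is torsion-free. Hence $\partial\mathcal{A}$ comes from $H^1(\F,\Q/\Z)$, i.e.\ it is the residue of a constant class $\beta\in\Br(k_\nu)$. Then $\mathcal{A}-\beta$ is unramified, so it lies in $\Br(\mathcal X)$, giving the decomposition.

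The main obstacle is the comparison step: justifying in positive characteristic that torsion-freeness of $\Pic\overline X$ forces $H^1_{\et}(\overline{\mathcal X}_0,\Z/n)=0$ for $p\nmid n$. We would do this via smooth proper base change, which identifies $H^1_{\et}$ with $\Z/n$ coefficients of the geometric special and generic fibres. On $\overline X$ (over separably closed $\bar k$), Kummer theory identifies $H^1(\overline X,\mu_n)$ with $\Pic(\overline X)[n]=0$, using $\bar k^\times$ $n$-divisible for $p\nmid n$. Restricting to prime-to-$p$ torsion is exactly what makes this work, and it explains the hypothesis on $\Br(X)[p']$. The remaining checks (compatibility of evaluation with residues, and $\Br(\OO_\nu)=0$) are standard and carry over verbatim from the number field case.
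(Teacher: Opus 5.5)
Your proof is correct and follows the paper's argument. Smooth proper base change plus Kummer theory on the geometric generic fibre give $H^1_{\et}(\overline{\mathcal X}_0,\Z/\ell)=0$. This yields a decomposition $\mathcal{A}=\mathcal{A}_0+\mathcal{A}_1$ with $\mathcal{A}_0\in\Br\mathcal X$ and $\mathcal{A}_1\in\Br k_\nu$, and then $\Br\OO_\nu\cong\Br\F=0$ makes the evaluation constant. The only difference is that you derive this decomposition by hand from Gabber purity and Hochschild--Serre on the special fibre, whereas the paper cites Colliot-Th\'el\`ene--Skorobogatov, Proposition 10.4.3. (Properness alone gives $X(k_\nu)=\mathcal X(\OO_\nu)$; Hensel's lemma is not needed.)
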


\begin{proof}
Since $\mathcal{A}\in \Br(X)[p']$, we may reduce to the case $\mathcal{A}\in \Br(X)\{\ell\}$ for some $\ell\neq p$. By definition of good reduction at $\nu$ there is a smooth and proper $\mathcal O _{\nu}$-scheme $\mathcal X$ with generic fiber $ X_{\nu}=X\times_k \Spec k_{\nu}$. 
Let $\overline{\mathcal X}_0$ be the closed geometric fiber of $\mathcal X\rightarrow\Spec\mathcal O_{\nu}$. By \cite[Cor. VI.4.2 and Cor. VI.4.3]{milne1980etale}, we have $\text{H}_{\text{\'et} } ^1(\overline{\mathcal X}_0,\Z/\ell)\simeq \text{H}_{\text{\'et}} ^1(\overline{X}_{\nu},\Z/\ell)\simeq \text{H}_{\text{\'et}} ^1(\overline{\mathcal X},\Z/\ell)$. The Kummer exact sequences gives $\text{H}_{\et} ^1(\overline{\mathcal X},\mu_\ell)\simeq\Pic(\overline X)[\ell]$ which vanishes by our assumption that $\Pic(\overline X)$ is torsion free. Since $\ell \neq \text{char}(k)=p$, we can apply \cite[Proposition 10.4.3]{ColliotThelene2021} and write the image of $\mathcal{A}$ in $\Br X_{\nu}$ as $\mathcal{A}_0+\mathcal{A}_1$, with $\mathcal{A}_0\in\Br \mathcal X$ and $\mathcal{A}_1\in \Br k_{\nu}$. By the properness of $\mathcal X\rightarrow \Spec\mathcal O_{\nu}$, any $k_{\nu}$-point of $X_{\nu}$ extends to an $\mathcal O_{\nu}$-point of $\mathcal X$, and thus the evaluation of $\mathcal{A}_0$ lands in $\Br \mathcal O_{\nu}$. Since $\Br \oo_{\nu}\cong \Br \F_p$ (\cite[Theorem 3.4.2(i)]{ColliotThelene2021}), and $\Br \F_p =0$ since $\F_p$ is $C_1$ (\cite[Theorem 1.2.13]{ColliotThelene2021}), we see that $\ev_{\mathcal{A}_0}$ vanishes. Therefore the evaluation map takes the constant value $\mathcal{A}_1\in\Br (k_{\nu})$. 
\end{proof}
	
\begin{remark}
We require fewer conditions than that of \cite[Theorem 13.3.15]{ColliotThelene2021} since we ask for the weaker conclusion: that only the elements in $\Br(X)[p']$ have constant evaluation. 
\end{remark}

\subsection{Quartic del Pezzo surfaces}
A \textbf{del Pezzo surface} $X$ is a nice surface over $k$ whose anticanonical sheaf is ample.  
The degree of $X$ is the self-intersection number
\(
d := K_X^2.
\)
Every quartic del Pezzo (a del Pezzo surface of degree 4) arises as the complete intersection of two quadrics in $\PP^4$. 
	
Let $X:\{Q_0=Q_{\infty}=0\}\subset \PP^4$ be a quartic del Pezzo over a field $k$. Let $M_0$ and $M_{\infty}$ be the symmetric matrices corresponding to the quadratic forms $Q_0$ and $Q_{\infty}$. The \textbf{characteristic polynomial} of $X$ is
\[
	f(x):=\det(M_0+xM_{\infty})\in k[x].
\]
Since $X$ is smooth, $f$ is a separable polynomial of degree 5 (\cite[Proposition 3.26]{Wittenberg2007}). 
Let $f$ factor as
\[
    f=f_1\cdots f_r \in k[x],
\]
where each $f_i$ is an irreducible factor of $f$. Let $k(\theta_i)\cong k[x]/f_i(x)$ be the field extension associated to $f_i$. Let $q_i$ be the rank 4 quadric corresponding to $M_0+\theta_iM_{\infty}$ (\cite[Section 3.3]{Wittenberg2007}), and let $H_i\subseteq \PP^4$ be any hyperplane  not containing the vertex of $q_i$.
For each $i$, we define $\epsilon_i$ to be the invariant
\[ 
\epsilon_i:=\disc \left(q_i \cap H_i\right)\in k(\theta_i)^{\times}/k(\theta_i)^{\times 2},
\]    
where the discriminant of $q_i\cap H_i$ is the determinant of the corresponding rank 4 matrix. The definition of $\epsilon_i \in k(\theta_i)^{\times}/k(\theta_i)^{\times 2}$ does not depend on the choice of $H_i$ (see \cite[Section 3.4.1]{Wittenberg2007}).

\section{The variety \texorpdfstring{$\mathcal{G}$}{}}\label{section:thevarietyG}
Let $X$ be a quartic del Pezzo surface over a field $k$ of odd characteristic.  
Let $\mathcal{Q}\subset \PP^4\times \PP^1$ be the smooth pencil of quadrics corresponding to $X$. Let $Q_t$ be the fibre of $\mathcal{Q}$ over $t\in \PP^1$. Consider the fourfold $\pi: \mathcal{G}\rightarrow \PP^1$ parametrising lines on quadrics in the pencil $\mathcal{Q}$ \cite[Section 1.2]{reid1972complete}. The rational points on $\mathcal{G}$ correspond to quadratic points on $X$ by \cite[Proposition 4.4(1)]{CreutzViray2023}, which we reprove in the following lemma:
	
\begin{lemma}\label{lem:quadpointrationalline}
Let $X:\{Q_0=Q_\infty=0\}$ be a quartic del Pezzo surface, then $X$ has a quadratic point if and only if $Q_t$ contains a rational line for some $t \in \mathbb{P}^1(k)$. Equivalently, $X$ has a quadratic point if and only if $\mathcal{G}(k)\neq \varnothing$.
\end{lemma}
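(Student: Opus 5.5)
The plan is to prove the two directions of the first equivalence directly from the geometry of lines through quadratic points. The second equivalence then follows from the definition of $\mathcal{G}$ as the relative Fano scheme of lines of $\mathcal{Q}\to\PP^1$: a $k$-point of $\mathcal{G}$ is a pair $(t,\ell)$ with $t\in\PP^1(k)$ and $\ell\subset Q_t$ a line defined over $k$.

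For ($\Leftarrow$), suppose $\ell\subset Q_t$ is a $k$-rational line with $t\in\PP^1(k)$, say $t=[\lambda:\mu]$, so that $Q_t=\lambda Q_0+\mu Q_\infty$. Pick an index $s\in\{0,\infty\}$ such that $Q_s$ and $Q_t$ span the pencil. If $\ell\subset Q_s$ as well, then $\ell\subset X$, which is impossible: $X$ is a smooth quartic del Pezzo surface and contains no line defined over $k$ — or if it did, it would contain rational points, hence quadratic points trivially. So we may assume $\ell\not\subset Q_s$. Then $Q_s|_\ell$ is a nonzero binary quadratic form, and $\ell\cap Q_s$ is a degree-2 subscheme of $\ell\cong\PP^1_k$. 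Its points are defined over $k$ or over a quadratic extension $K/k$. They lie on $Q_s\cap Q_t=X$, so $X$ has a $K$-point (or a $k$-point, which also gives quadratic points by base change). The one subtle case is when the degree-2 subscheme is a double point; then it is $k$-rational, which is still fine.

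For ($\Rightarrow$), let $P\in X(K)$ with $[K:k]=2$, and assume $P\notin X(k)$; if $X(k)\neq\varnothing$, we instead take a line through a rational point, handled below. Let $\bar P$ be the Galois conjugate of $P$. Since $P\neq\bar P$, the line $\ell=\overline{P\bar P}$ is Galois-stable and hence defined over $k$. For each $u\in\PP^1$, the restriction $Q_u|_\ell$ is a binary quadratic form vanishing at the two distinct points $P$ and $\bar P$. The restrictions therefore all lie in the one-dimensional space of binary quadrics vanishing on $\{P,\bar P\}$. Hence the linear map $(\lambda,\mu)\mapsto(\lambda Q_0+\mu Q_\infty)|_\ell$ from a $2$-dimensional space to this $1$-dimensional space has a nonzero kernel defined over $k$. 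This gives $t\in\PP^1(k)$ with $Q_t|_\ell\equiv 0$, that is, $\ell\subset Q_t$.

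If $X(k)\neq\varnothing$, take $P\in X(k)$ and any $k$-rational line $\ell$ through $P$ lying in the tangent space $T_PX$, which is defined over $k$. Each $Q_u|_\ell$ vanishes to order at least $2$ at $P$, so again the restrictions span a space of dimension at most $1$, and the same linear-algebra argument gives $t\in\PP^1(k)$ with $\ell\subset Q_t$.

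The main obstacle is bookkeeping rather than depth. In ($\Leftarrow$) one must make sure $\ell$ is not contained in the base locus $X$, which I would handle via the "either $\ell\subset X$ or not" dichotomy. One must also note that $\mathcal{G}(k)$ corresponds exactly to rational lines in rational fibres, which is standard for the Fano scheme of lines over a field.
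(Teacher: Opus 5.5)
Your proof is correct and follows essentially the paper's argument. In both, the line through $P$ and $\bar P$ (or a rational tangent line at a rational point) is shown to lie in some rational member of the pencil. Your dimension count on the restrictions $Q_u|_\ell$ is simply a cleaner form of the paper's step of choosing a third rational point on $\ell$ and applying Bezout. The converse direction, intersecting with $Q_0$ or $Q_\infty$, is the same in both.

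One small slip: a quartic del Pezzo surface can contain $k$-rational lines (a split one has sixteen), so ``which is impossible'' is false. Your fallback clause, that such a line already gives rational points, covers that case, so nothing breaks.
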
       

\begin{proof}
Given a quadratic point $P$ on $X$, suppose first that $P$ and its conjugate $\Bar{P}$ are distinct. Consider the rational line $l\subset \PP^4_k$ passing through $P$ and $\Bar{P}$. It is defined over $k$ since it is fixed by Galois action. Now take any third point $P'\in \PP^4(k)$ on $l$, distinct from $P$ and $\bar{P}$ since it is rational. Solve for $t_0$ such that $Q_0(P')+t_0Q_{\infty}(P')=0$. Then the quadric $Q_{t_0}:Q_0+t_0Q_{\infty}$ satisfies $\#(Q_{t_0} \cap l) \geq 3$, and therefore $Q_{t_0}$ must contain $l$ by Bezout's theorem. 
        
Now suppose $P,\Bar{P}$ are the same point, so $P$ is rational. Then take any rational line $\ell\subset \PP^4_k$ through $P$ tangent to $X$. Take a second point $P'\in \PP^4(k)$ distinct from $P$, and solve for $t_0$ such that $Q_0(P')+t_0Q_\infty(P')=0$. Then the quadric $Q_{t_0}:Q_0+t_0Q_\infty$ must again contain $\ell$ by Bezout's theorem, since $\ell$ intersects $Q_{t_0}$ at $P$ with multiplicity two. 
		
On the other hand, given a rational line $l$ in $Q_t$, the intersection of $l$ with $Q_0$ or $Q_{\infty}$ consists of two points, fixed by Galois action, and so $X$ has a quadratic point.
\end{proof}
	
We therefore wish to understand the Brauer--Manin obstruction on $\mathcal{G}$. The following proposition is known to experts over number fields, the case of global function fields follows similarly:

\begin{proposition}\label{Gisnice} Let $X$ and $\mathcal{G}$ be as above, defined over a global function field $k$ of odd characteristic. Then for all primes $\nu$ of good reduction of $\mathcal{G}$, we have that $\ev_\mathcal{A}: \mathcal{G}(k_{\nu})\rightarrow \Br (k_{\nu})$ is constant for all $\mathcal{A}\in \Br \mathcal{G}$.
\end{proposition}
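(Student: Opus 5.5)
The plan is to reduce to Proposition~\ref{proposition: good reduction at a place imples constant inv map}. For that we need three facts about $\mathcal{G}$: it is nice, $\Pic \overline{\mathcal{G}}$ is torsion-free, and $\Br \mathcal{G} = \Br(\mathcal{G})[p']$. The last one should follow from $\Br \mathcal{G}/\Br_0 \mathcal{G}$ being finite of order prime to $p$, together with control of $\Br_0$.

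First, niceness and the geometry. Since $\mathcal{Q}$ is a smooth pencil, the quadrics $Q_t$ are smooth away from the five degeneracy points (roots of $f$) and have rank $4$ there. The variety of lines on a smooth quadric threefold is $\PP^3$. On a rank $4$ quadric cone in $\PP^4$ it is a cone-type variety, and Reid's analysis \cite[Section 1.2]{reid1972complete} shows that $\mathcal{G}$ is smooth and projective. It is geometrically integral, being a fibration over $\PP^1$ with geometrically integral generic fibre. Alternatively, I would invoke the known structure over $\overline{k}$: $\mathcal{G}$ is geometrically rational (Reid; it is a Fano fourfold related to the intermediate Jacobian, which is $\mathrm{Jac}$ of the genus $2$ curve $y^2 = f(x)$, and $\mathcal{G}$ is a torsor under it fibred by $\PP^3$'s over $\PP^1$). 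More precisely, I would aim to show that $\overline{\mathcal{G}}$ is simply connected and that $H^1(\overline{\mathcal{G}}, \oo) = 0$. Both hold because the generic fibre is a Severi--Brauer threefold (lines on a smooth quadric threefold form $\PP^3$ geometrically) over a rational base, and the degenerate fibres are reduced. Then $\Pic \overline{\mathcal{G}}$ is finitely generated, and its torsion equals $H^1_{\et}(\overline{\mathcal{G}}, \mu_\ell)$, which vanishes by simple connectivity; the same argument in characteristic $p$ handles the $p$-part via Artin--Schreier. These are the arguments known over number fields, and they carry over verbatim in odd characteristic since only the quadric geometry is used.

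Second, the Brauer group. By the Hochschild--Serre spectral sequence, $\Br_1\mathcal{G}/\Br_0\mathcal{G} \hookrightarrow H^1(k, \Pic \overline{\mathcal{G}})$. This group is finite and $2$-torsion, since $\Pic\overline{\mathcal{G}}$ is a lattice on which Galois acts through a finite group related to the $2$-torsion of the Jacobian. Next we need $\Br \overline{\mathcal{G}} = 0$, or at least that it has no $p$-part. This holds because $\overline{\mathcal{G}}$ is rational (or rationally connected with $H^3$ vanishing appropriately), and for rational smooth projective varieties the Brauer group is a birational invariant that vanishes, including the $p$-part. Hence $\Br\mathcal{G}/\Br_0\mathcal{G}$ is a finite $2$-group, and $2 \neq p$. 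The remaining subtlety is $\Br_0 = \Br k$, which has $p$-primary part. This is harmless: constant classes trivially have constant evaluation, so any $\mathcal{A}$ decomposes as $\mathcal{A}' + c$ with $c \in \Br k$ and $\mathcal{A}'$ of $2$-power order. To see this, note that $2^n\mathcal{A} \in \Br k$ for some $n$, and $\Br k$ is divisible, so we can subtract a constant whose $2^n$-multiple matches.

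Finally, apply Proposition~\ref{proposition: good reduction at a place imples constant inv map} to $\mathcal{A}' \in \Br(\mathcal{G})[p']$ at a place $\nu$ of good reduction. This gives constant evaluation of $\mathcal{A}'$, hence of $\mathcal{A}$. I expect the main obstacle to be justifying $\Br\overline{\mathcal{G}}=0$ (including its $p$-part) and the torsion-freeness of $\Pic\overline{\mathcal{G}}$ in positive characteristic. My first attempt would be to prove geometric rationality of $\mathcal{G}$ directly from the fibration structure: after base change to $\overline{k}(t)$, the generic fibre is a Severi--Brauer variety over a $C_1$ field, hence $\PP^3$. That gives rationality, and then birational invariance of $\Br$ and of $\pi_1$ finishes the argument.
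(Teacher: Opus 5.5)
Your skeleton matches the paper's: $\mathcal{G}$ is nice, $\overline{\mathcal{G}}$ is simply connected so $\Pic\overline{\mathcal{G}}$ has no prime-to-$p$ torsion, $\Br\mathcal{G}/\Br_0\mathcal{G}$ is $2$-torsion, and Proposition~\ref{proposition: good reduction at a place imples constant inv map} finishes the argument. The paper gets simple connectivity from the birational equivalence of $\mathcal{G}$ with $\operatorname{Sym}^2X$ \cite[Proposition~4.4]{CreutzViray2023}. You get it from geometric rationality (Severi--Brauer generic fibre, split over $\overline{k}(x)$ by Tsen) or from the fibration over $\PP^1$ with reduced fibres; either works. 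Your explicit splitting $\mathcal{A}=\mathcal{A}'+c$ with $c$ constant fills in a step the paper leaves implicit: $\Br_0\mathcal{G}$ contains $p$-primary classes, so $\Br\mathcal{G}\neq\Br(\mathcal{G})[p']$.

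The step that does not hold up as written is the claim that $H^1(k,\Pic\overline{\mathcal{G}})$ is $2$-torsion. The proposition needs at least that its order is prime to $p$. If Galois acts on a lattice through a finite group $\Gamma$, you only learn that $H^1$ is killed by $|\Gamma|$. Here $\Gamma$ permutes the ten components $D_i^{\pm}$ of the five degenerate fibres, so $|\Gamma|$ can be divisible by $3$ or $5$. For the $p=3$ family of Section~\ref{sec:p=3}, $\Gamma$ maps onto $\Gal(k(\sqrt{\epsilon_3})^{\Gal}/k)\cong S_4$. So your reasoning does not exclude $3$-primary classes in $\Br\mathcal{G}/\Br_0\mathcal{G}$, to which Proposition~\ref{proposition: good reduction at a place imples constant inv map} would not apply. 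The phrase about the $2$-torsion of the Jacobian does not supply the missing argument. Either cite \cite[Corollary~5.2]{CreutzViray2023}, as the paper does, or compute as follows.
\begin{itemize}
\item Restriction to the generic fibre gives $0\to N\to\Pic\overline{\mathcal{G}}\to\Z\to 0$, with $N$ the vertical classes.
\item Since the degenerate fibres are reduced, $N$ is the quotient of the permutation module on $F$ and the $D_i^{\pm}$ by the permutation module on the relations $D_i^{+}+D_i^{-}=F$.
\item Shapiro's lemma kills $H^1$ of permutation modules, so $H^1(k,N)$ embeds into $\bigoplus\ker\bigl(H^1(k(\theta_i),\Q/\Z)\to H^1(L_i,\Q/\Z)\bigr)$. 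The sum runs over Galois orbits of roots of $f$, and $L_i$ is the field of definition of $D_i^{+}$, of degree at most $2$ over $k(\theta_i)$.
\item This group is killed by $2$, and $H^1(k,\Pic\overline{\mathcal{G}})$ is a quotient of $H^1(k,N)$.
\end{itemize}

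Three smaller points. First, in characteristic $p$, Hochschild--Serre involves $\Br(\mathcal{G}_{k^s})$ rather than $\Br(\mathcal{G}_{\overline{k}})$, and Tsen applies to $\overline{k}(x)$, not $k^s(x)$. This is repaired by noting that the generic fibre's Brauer class is that of an even Clifford algebra, hence $2$-torsion, and that $\Br(k^s(x))\{2\}=0$. Second, your Artin--Schreier remark is misdirected, since $\Pic[p]$ is governed by flat $\mu_p$-torsors rather than \'etale $\Z/p$-covers. It is also unnecessary: the proof of Proposition~\ref{proposition: good reduction at a place imples constant inv map} only uses $\Pic(\overline{X})[\ell]=0$ for $\ell\neq p$. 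Third, the parenthetical describing $\mathcal{G}$ as a torsor under a Jacobian is wrong, since $\mathcal{G}$ is a geometrically rational fourfold, but you never use it.
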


\begin{proof}
Since $\mathcal{G}$ is a closed subscheme of $\PP^1\times \text{Gr}(2,5)$ (\cite[Definition 1.9]{reid1972complete}), it is a projective variety. 
Furthermore, by \cite[Proposition 2.1]{reid1972complete} $\mathcal{G}$ is smooth, and hence $\mathcal{G}$ is a nice variety. 
It is birational to $\text{Sym}^2(X)$ by \cite[Proposition 4.4 (i)]{CreutzViray2023}, which is simply connected, and so $\mathcal{G}$ is simply connected.
Therefore  $\Pic(\bar{\mathcal{G}})$ is torsion-free. By \cite[Corollary~5.2]{CreutzViray2023} we have that $\Br\mathcal{G}/\Br_0\mathcal{G}$ is $2$-torsion, hence by Proposition~\ref{proposition: good reduction at a place imples constant inv map} the result follows.
\end{proof}
	
\section{Existence for \texorpdfstring{$p>3$}{}}\label{section:pneq3}

In this section, we fix a prime $p>3$ and construct an infinite family of quartic del Pezzo surfaces without quadratic points over $k=\F_p(t)$. 

\begin{lemma}\label{lem:choiceofalpha}\label{lem: constructing alpha}
For all primes $p>3$, there exists an $\alpha\in\F_p^{\times}$ such that the following hold:
\begin{enumerate}
    \item \label{alpha is not a square} $\alpha \not\in \F_p^{\times 2}$;
    \item \label{alpha+1 is a square} $\alpha +1\in \F_p^{\times2}$;
    \item \label{3alpha-1 is nonzero} $3\alpha -1\in \F_p^\times$.
\end{enumerate}
Furthermore, if $p\notin\{7,11\}$, then there exists $\alpha\in\F_p^\times$ which additionally satisfies 
\begin{enumerate}
\setcounter{enumi}{3}
    \item \label{3alpha-1 is not a square} $3\alpha-1\notin\F_p^{\times 2}$.
\end{enumerate}
\end{lemma}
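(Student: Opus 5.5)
The plan is to count the admissible $\alpha$ with quadratic character sums. Let $\chi$ be the Legendre symbol on $\F_p$, with $\chi(0)=0$.

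\textbf{Conditions (1)--(3).} Put
\[
N=\#\{a\in\F_p\setminus\{0,-1\} : \chi(a)=-1,\ \chi(a+1)=1\}=\tfrac14\sum_{a\neq 0,-1}(1-\chi(a))(1+\chi(a+1)).
\]
Expanding, the pieces are standard sums over $a\neq 0,-1$:
\[
\sum\chi(a)=-\chi(-1),\qquad \sum\chi(a+1)=-1,\qquad \sum\chi(a(a+1))=-1.
\]
The last is the usual sum of $\chi$ over a quadratic with nonzero discriminant. These give
\[
N=\tfrac14\bigl(p-2+\chi(-1)\bigr)\ \ge\ \tfrac{p-3}{4}\ \ge 1 \quad\text{for } p\ge 5.
\]
Condition (3) is then automatic. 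If $\alpha=1/3$, then $\chi(\alpha)=\chi(3)$ and $\chi(\alpha+1)=\chi(4/3)=\chi(3)$. So $1/3$ can never be a nonsquare with $\alpha+1$ a square, and every $\alpha$ counted by $N$ satisfies (1)--(3).

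\textbf{Condition (4).} Consider
\[
N'=\tfrac18\sum_{a\neq 0,-1,1/3}(1-\chi(a))(1+\chi(a+1))(1-\chi(3a-1)).
\]
Expanding gives eight terms, handled as follows.
\begin{itemize}
\item The constant term contributes $p-3$.
\item Each single-character sum $\sum\chi(\ell(a))$, with $\ell$ linear, has absolute value at most $1$ or $2$ once the excluded points are accounted for.
\item Each product $\sum\chi(\ell_1(a)\ell_2(a))$ of two distinct linear factors equals $-\chi(\text{leading coeff})$ up to the removed points, so it is $O(1)$.
\item The triple product $\sum_a\chi\bigl(a(a+1)(3a-1)\bigr)$ is $-a_p$ for the elliptic curve $y^2=x(x+1)(3x-1)$. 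This curve is smooth since $0,-1,1/3$ are distinct (using $p>3$). By Hasse--Weil its absolute value is at most $2\sqrt p$.
\end{itemize}
This yields a bound of the form $8N'\ge p-C-2\sqrt p$ with an explicit small constant $C$ (around $10$). Hence $N'>0$ for all $p$ beyond a small explicit threshold, roughly $p>30$. Again $\alpha=1/3$ is excluded automatically, and also $3\alpha-1\neq 0$.

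\textbf{Finite check.} For the remaining primes $5\le p\le$ threshold, I would list a suitable $\alpha$ by hand, or note that none exists exactly for $p\in\{7,11\}$. The guiding observation is that $\alpha+1$ must be a square, $\alpha$ a nonsquare, and $3\alpha-1$ a nonsquare. For example, $p=5$ works with $\alpha=3$: here $3$ is a nonsquare, $4$ is a square, and $3\alpha-1=8\equiv 3$ is a nonsquare.

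The main obstacle is the bookkeeping. I need to pin down the constant $C$ sharply enough that the threshold is small, and then carry out the small-prime verification carefully, including confirming that $p=7$ and $p=11$ genuinely fail (which is why they are excluded from the statement).
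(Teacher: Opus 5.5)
Your proposal is correct and follows essentially the paper's route: expand the product of $(1\pm\chi)/2$ factors, apply Hasse--Weil to the cubic sum, and check small primes by hand. The difference is that you evaluate the quadratic sums exactly, where the paper bounds them by $\sqrt p$ (which forces it to check all $p<46$); finishing that bookkeeping gives $8N'=p-3+4\chi(-1)+\sum_a\chi\bigl(a(a+1)(3a-1)\bigr)\ge p-3+4\chi(-1)-2\sqrt p$, which is positive for every prime $p>3$ except $7$ and $11$, so no finite check is needed at all. Two small points: $(p-3)/4\ge 1$ fails at $p=5$ (use integrality of $N$, or $N=1$ directly), and you need not show that $p=7,11$ fail, since the statement makes no claim there.
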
 

\begin{proof}
Let $\chi$ be the character associated to the Legendre symbol modulo $p$. Let \[N:=\#\{x \in \F_p: \chi(x)=-1, \chi(x+1)=1 \text{ and } \chi(3x-1)=-1\}. \]
         
Then we can estimate $N$ in terms of polynomial sums of $\chi$, i.e. \[N + \frac{3}{2} \geq \sum_{x \in \F_p} \frac{1-\chi(x)}{2} \frac{1+\chi(x+1)}{2}\frac{1-\chi(3x-1)}{2}, \] where $\frac{3}{2}$ is the maximal contribution arising from the vanishing of $\chi$. Expanding the sum and recalling that linear sums vanish, we obtain 
\[ N+\frac{3}{2} \geq \frac{1}{8}\left(p+\sum_{x \in \F_p} \chi(p_1(x)) + \chi(p_2(x))+\chi(p_3(x))+ \chi(p_4(x))\right),\] 
where $p_1, p_2$ and $p_3$ are polynomials of degree 2 and $p_4$ is a polynomial of degree 3. Now, the Weil bound on character sums implies that $N +\frac{3}{2} \geq \frac{1}{8}(p-5\sqrt{p})$ or, equivalently, $N \geq \frac{1}{8}(p-5\sqrt{p}-12)$. For $p \geq 46$, the inequality implies $N>0$ which provides at least one choice for $\alpha$. For $p<46$, we explicitly find $\alpha$ satisfying our conditions \ref{alpha is not a square}, \ref{alpha+1 is a square}, \ref{3alpha-1 is nonzero} and \ref{3alpha-1 is not a square} except for $p=7$, where we choose $\alpha=3$, and $p=11$, where we choose $\alpha=8$, satisfying only the first three properties. 
\end{proof}

\subsection{The family of quartic del Pezzo surfaces}

Fix a choice of $\alpha\in\F_p^{\times}$ given by Lemma~\ref{lem: constructing alpha}. For a given $d\in k^{\times}$, we define $X_{d}=\{Q_0=Q_\infty=0\}\subseteq\PP^4_k$ to be the surface given by intersection of the two quadrics
\begin{align}
Q_0:&=d(u_0^2-tu_1^2)+(t^2+t^3)u_2^2+2\alpha tu_2u_3+2t^2u_3u_4,\label{q0}\\
Q_{\infty}:&=-2du_0u_1+\frac{\alpha}{t}u_2^2+ t^2u_3^2+2\alpha u_2u_4+\frac{(3\alpha -1)t}{\alpha+1 }u_4^2. \label{qinfty}
\end{align}
This defines a smooth, complete intersection. The surfaces in Theorem~\ref{thm:maindp4} arise from this family for $p>3$ (see Section~\ref{sec:howwechose} for how we constructed this family). The characteristic polynomial of $X_{d}$ is
\[
	f(x):=c\left(x^2 + t\right)f_3(x),
\]
where
\[
	f_3(x):=x^3 - \frac{t^3(t+1)(3\alpha -1)}{\alpha (\alpha -1)^2}x^2 + tx + \frac{t^4(t+1)(\alpha +1)}{\alpha (\alpha -1)^2},
\]
and 
\[
	c=\frac{d^2\alpha (\alpha -1)^2t^2}{\alpha +1}.
\]
Let $\theta\in k[x]/f_3(x)$ denote the image of $x$ under the canonical map $k[x]\rightarrow k[x]/f_3(x)$, and let $\epsilon_2,\epsilon_3$ denote the discriminants of the two rank four quadrics. Up to squares, they are of the form
\begin{equation*}\label{eq: definition of ep2 and ep3}   
\epsilon_2:=d\alpha t(t+1)\in k(\sqrt{-t})/k(\sqrt{-t})^{\times2},\qquad \epsilon_3:=-\alpha(t+1)\theta \in k(\theta)/k(\theta)^{\times2}.
\end{equation*}

We compute the discriminant $\Delta$ of $f(x)$, and the discriminant $\Delta_{f_3}$ of $f_3$:
\[
	\Delta:=\frac{-2^{12}\alpha ^4t^{36}(t+1)^4d^{16}\xi}{(\alpha +1)^8}, \qquad	\Delta_{f_3}:=\frac{2^2t^3\xi}{\alpha^4(\alpha-1)^8},
\]
where
\[
	\xi:=t^{10}(\alpha +1)(3\alpha -1)^3(t+1)^4-2\alpha ^2t^5(\alpha -1)^4(9\alpha^2+12\alpha+1)(t+1)^2-\alpha ^4(\alpha -1)^8.
\]

We begin by recalling a lemma from \cite{creutz2024quartic}:
	
\begin{lemma}\cite[Lemma 2]{creutz2024quartic} \label{lemma2} Let $Q$ be a rank 5 quadratic form over a field $k$ of characteristic not equal to 2 and assume that $Q$ is an orthogonal sum of a hyperbolic plane and a rank 3 form. The quadric 3-fold $\mathbb{V}(Q) \subset \PP^4_k$ contains a k-rational line if and only if the conic in $\PP^2_k$ defined by the rank 3 form contains a $k$-rational point.
\end{lemma}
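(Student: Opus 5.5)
This lemma is quoted from \cite[Lemma 2]{creutz2024quartic}, but it has a short self-contained proof. The plan is to write $Q = H \perp q$, where $H = xy$ is a hyperbolic plane in the coordinates $x,y$ and $q(z_1,z_2,z_3)$ is the rank $3$ form. Throughout, let $C=\{q=0\}\subset\PP^2_k$ be the conic. The two implications are handled separately: the easy direction builds a line directly from a point on $C$, and the harder direction uses the geometry of lines on a smooth quadric threefold.

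\emph{($\Leftarrow$)} Suppose $z_0=(a_1:a_2:a_3)\in C(k)$. Then the points $(1:0:0:0:0)$ and $(0:0:a_1:a_2:a_3)$ in the coordinates $(x:y:z)$ are both on $\mathbb V(Q)$. They are also orthogonal with respect to the bilinear form of $Q$: the $H$ and $q$ parts are orthogonal, and $x$ is isotropic in $H$. Hence the $k$-rational line they span is totally isotropic and lies in $\mathbb V(Q)$.

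\emph{($\Rightarrow$)} Suppose $\ell\subset\mathbb V(Q)$ is a $k$-line, i.e.\ a $2$-dimensional totally isotropic subspace $W\subset k^5=k^2\oplus k^3$. Project $W$ to the $k^3$ factor via $\pi: W\to k^3$, and split according to $\dim\pi(W)$.
\begin{itemize}
\item If $\pi$ is not injective, some nonzero $w\in W\cap k^2$ exists. This $w$ is isotropic for $H$, so $w=(1,0)$ or $(0,1)$ up to scaling; say $w=(1,0)$. Orthogonality to $w$ forces every element of $W$ to have $y=0$. Then $Q|_W=q\circ\pi$, and since $W$ is $2$-dimensional with $W\cap k^2=\langle w\rangle$, the image $\pi(W)$ is a nonzero vector $v$ with $q(v)=0$. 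That gives a point of $C(k)$.
\item If $\pi$ is injective, $\pi(W)$ is a $2$-plane in $k^3$, i.e.\ a $k$-line $L\subset\PP^2$. Restricting $q$ to this plane gives a binary form $q|_{\pi(W)}$, and on $W$ we have $q|_{\pi(W)}(\pi(w))=-H(w)$. The key observation is that $H(w)=x(w)y(w)$, where $x,y$ are linear forms on $W$. So $q|_L$ is, up to sign, a product of two $k$-rational linear forms, and it either vanishes identically or has a $k$-rational zero. In both cases $L\cap C$ contains a $k$-point. The degenerate possibility $x\equiv 0$ on $W$ (or $y\equiv 0$) is included here, since then $q|_L\equiv 0$.
\end{itemize}

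The only step needing care is the second case: the point is to express $q|_{\pi(W)}$ as minus the pullback of the split form $xy$, rather than to argue geometrically. Everything else is linear algebra valid in characteristic $\neq 2$, which is needed so that quadrics correspond to symmetric bilinear forms.
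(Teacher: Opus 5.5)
Your proof is correct and complete. The paper gives no proof of its own here; it imports the lemma from \cite[Lemma 2]{creutz2024quartic}, so the relevant comparison is with the standard Witt-theoretic argument, which your proof does not use.

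\textbf{The Witt-theoretic route.} $\mathbb{V}(Q)$ contains a $k$-line if and only if $Q$ has a $2$-dimensional totally isotropic subspace. This means $Q$ has Witt index at least $2$, i.e.\ $Q\simeq H\perp H\perp\langle c\rangle$. By Witt cancellation, $H\perp q\simeq H\perp H\perp\langle c\rangle$ if and only if $q\simeq H\perp\langle c\rangle$, i.e.\ if and only if $q$ is isotropic.

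\textbf{Your route.} You carry out this cancellation by hand in the one case needed. You project the isotropic plane $W$ onto the $q$-summand and extract an isotropic vector of $q$ explicitly, in one of two ways. When $W$ meets the hyperbolic plane, it is a generator of $\pi(W)$. When $\pi$ is injective, it comes from the kernel of one of the linear factors in $q\circ\pi=-xy$ on $W$.

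\textbf{Trade-off.} Your version is self-contained and constructive, with no appeal to Witt's theorem. The Witt version is shorter and gives more at once, namely that the Witt index of $H\perp q$ is exactly one more than that of $q$.

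\textbf{Two points you leave implicit, both harmless.}
\begin{itemize}
\item In your first case, $W\cap k^2=\langle w\rangle$ because the hyperbolic plane is nondegenerate, hence not totally isotropic.
\item In your second case, the subcase $q|_L\equiv 0$ never actually occurs. Since $Q$ has rank $5$, $q$ is a nondegenerate ternary form, and so it has no $2$-dimensional totally isotropic subspace. Covering that subcase anyway costs nothing.
\end{itemize}
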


We now apply Lemma \ref{lemma2} to show exactly under which choice of $\nu \in \Omega_k$ the quadric 3-fold $\mathbb{V}(Q_{\infty})$, given by  \eqref{qinfty}, contains a $k_\nu$-rational line. 
	
\begin{lemma} \label{lem: local solubility of Q_infinity}
Let $\nu\in\Omega_{k}$. The quadric $\mathbb{V}(Q_{\infty})$, contains a $k_\nu$-rational line if and only if $\nu\notin\{ t,\infty\}$.
\end{lemma}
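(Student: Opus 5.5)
Since $-2du_0u_1$ is a hyperbolic plane in the variables $u_0,u_1$ (the substitution $u_0=a+b$, $u_1=(a-b)/(-2d)$ is invertible over $k$ because $2d\neq 0$), $Q_\infty$ is the orthogonal sum of a hyperbolic plane and the rank $3$ form
\[
R(u_2,u_3,u_4):=\frac{\alpha}{t}u_2^2+2\alpha u_2u_4+\frac{(3\alpha-1)t}{\alpha+1}u_4^2+t^2u_3^2 .
\]
By Lemma~\ref{lemma2}, applied over $k_\nu$, $\mathbb{V}(Q_\infty)$ has a $k_\nu$-line if and only if the conic $R=0$ has a $k_\nu$-point. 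So the whole problem reduces to deciding for which $\nu$ the conic $R=0$ is isotropic over $k_\nu$.

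First I diagonalise $R$. Completing the square in $u_2$ gives
\[
\frac{\alpha}{t}\Bigl(u_2+tu_4\Bigr)^2+\Bigl(\frac{(3\alpha-1)t}{\alpha+1}-\alpha t\Bigr)u_4^2 .
\]
The second coefficient is $t\bigl(3\alpha-1-\alpha^2-\alpha\bigr)/(\alpha+1)=-t(\alpha-1)^2/(\alpha+1)$. Hence
\[
R\simeq \langle \alpha/t,\; -t(\alpha-1)^2/(\alpha+1),\; t^2\rangle \simeq \langle \alpha t,\; -t(\alpha+1),\; 1\rangle ,
\]
modulo squares, using $\alpha\neq 1$ (which holds since $\alpha$ is a nonsquare) and that $\alpha+1$ is a nonzero square by Lemma~\ref{lem:choiceofalpha}. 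Since $\alpha+1\in\F_p^{\times2}$, this becomes $\langle 1,\alpha t,-t\rangle$. Scaling by $-t$ gives $\langle -t, -\alpha, 1\rangle$, up to squares. The conic is therefore isotropic exactly when the quaternion algebra $(t,\alpha)$ splits, i.e. when $x^2-\alpha y^2=tz^2$ is solvable, i.e. when the Hilbert symbol $(\alpha,t)_\nu=1$.

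Next I compute this Hilbert symbol place by place.
\begin{itemize}
\item At a finite place $\nu\neq t$: both $\alpha$ (a constant) and $t$ are units at $\nu$, and the residue characteristic is odd. Hence the symbol is trivial, and the conic has a point, so $\mathbb{V}(Q_\infty)$ has a $k_\nu$-line.
\item At $\nu=t$: the tame symbol is $\alpha^{v(t)}=\alpha$ in $\F_p^\times/\F_p^{\times2}$. This is a nonsquare by Lemma~\ref{lem:choiceofalpha}\eqref{alpha is not a square}, so the symbol is $-1$.
\item At $\nu=\infty$: the uniformiser is $1/t$ and $v_\infty(t)=-1$, so the tame symbol is again $\alpha$ up to squares (the residue field is $\F_p$). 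The symbol is therefore nontrivial.
\end{itemize}
Alternatively, the case $\nu=\infty$ follows from the product formula, since exactly one other place, $\nu=t$, is ramified.

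The only delicate step is the diagonalisation and the bookkeeping of square classes. It uses $\alpha\neq1$, $\alpha+1\in\F_p^{\times2}$ and $\alpha\notin\F_p^{\times2}$, and it requires checking that the residue field at $t$ and at $\infty$ is $\F_p$, so that $\alpha$ remains a nonsquare there. Condition (3) is not needed here.
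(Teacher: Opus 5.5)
Your proposal is correct and follows essentially the same route as the paper: split off the hyperbolic plane $-2du_0u_1$, apply Lemma~\ref{lemma2}, and diagonalise the rank~$3$ part. Your normalisation differs from the paper's by the scalar $t/\alpha$, and it yields the same quaternion algebra $(\alpha,t)$ once $\alpha+1\in\F_p^{\times2}$ is used. You also supply the tame-symbol computation at $t$ and $\infty$ that the paper leaves implicit when it says $(\alpha,t)$ is ramified exactly there.
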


\begin{proof}
Observe $Q_\infty$ is the sum of a hyperbolic plane and the rank $3$ quadric
\begin{equation*}
	\frac{\alpha }{t}u_2^2+t^2u_3^2+2\alpha u_2u_4+\frac{3\alpha t-t}{\alpha+1 }u_4^2.
\end{equation*}
Diagonalising over $k_\nu$, we may write this quadric (up to the leading constant) as
\begin{equation*}
	(u_2+tu_4)^2+\frac{t^3}{\alpha} u_3^2-\frac{(\alpha-1)^2}{\alpha(\alpha +1)}t^2u_4^2.
\end{equation*}
		
Hence the quadric corresponds to the quaternion algebra $(\alpha (\alpha +1),t(\alpha +1))$. By our choice of $\alpha$, we have
\[
    (\alpha (\alpha +1),t(\alpha +1))=(\alpha,t)\in \Br(k),
\]
which is ramified exactly at $\nu=t,\infty$. The result now follows from Lemma \ref{lemma2}.
\end{proof}

We now study the local solubility at the prime $\nu=t$, which is essential for the obstruction.

\begin{lemma} \label{lemma4} 
For all $b \in \oo_{t}$, the quadric $\mathbb{V}(bQ_0 + Q_{\infty}) \subset \Proj^4_{k_t}$ contains no $k_t$-rational lines. 
\end{lemma}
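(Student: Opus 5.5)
The plan is to reduce to Lemma~\ref{lemma2} exactly as in the proof of Lemma~\ref{lem: local solubility of Q_infinity}, and then to show that adding $bQ_0$ with $b\in\oo_t$ only perturbs the relevant ternary form by factors that are squares in $k_t=\F_p(\!(t)\!)$. The quaternion algebra therefore stays $(\alpha,t)$. This algebra is nontrivial at $\nu=t$, since $\alpha$ is a nonsquare unit and $t$ is a uniformiser.

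\textbf{Step 1: the hyperbolic plane.} The variables $u_0,u_1$ appear in $bQ_0+Q_\infty$ only through the binary form $bd(u_0^2-tu_1^2)-2du_0u_1$. Its Gram determinant is $-d^2(1+b^2t)$. Since $b\in\oo_t$, the element $1+b^2t$ is a $1$-unit, hence a square by Hensel's lemma (as $p\neq 2$). So the determinant is $-1$ times a square, and this binary form is a hyperbolic plane. By Lemma~\ref{lemma2}, $\mathbb{V}(bQ_0+Q_\infty)$ contains a $k_t$-line if and only if the conic given by the ternary form
\[
R_b:=\Big(\tfrac{\alpha}{t}+b(t^2+t^3)\Big)u_2^2+2\alpha t b\,u_2u_3+2\alpha u_2u_4+(t^2)u_3^2+2t^2b\,u_3u_4+\tfrac{(3\alpha-1)t}{\alpha+1}u_4^2
\]
has a $k_t$-point.

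\textbf{Step 2: diagonalise with valuation control.} Write $a_{22}=\frac{\alpha}{t}(1+O(t^3))$. First complete the square in $u_2$, then diagonalise the remaining binary form in $(u_3,u_4)$. At $b=0$ this reproduces the diagonal form from the proof of Lemma~\ref{lem: local solubility of Q_infinity}, with coefficients of valuations $-1$, $3$ and $2$.

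For general $b$, I will track the $t$-adic valuation of each correction term. The $b$-contributions to the off-diagonal entries are $\alpha t b$ and $t^2 b$, which have higher valuation than the $b=0$ entries they modify; for example, $\alpha t b$ versus $0$ must be compared against the product of the diagonal entries. The goal is to show that each diagonal coefficient of $R_b$ equals the corresponding $b=0$ coefficient times a unit congruent to $1$ mod $t$. Concretely, the coefficients should be
\[
\tfrac{\alpha}{t}\,w_1,\qquad \tfrac{t^3}{\alpha}\,w_2,\qquad -\tfrac{(\alpha-1)^2t^2}{\alpha(\alpha+1)}\,w_3,\qquad w_i\in 1+t\oo_t .
\]
Here one uses hypothesis~\ref{3alpha-1 is nonzero} and $\alpha\neq1$; the latter follows from~\ref{alpha is not a square}. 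These guarantee that the relevant leading constants are nonzero, so the correction terms are genuinely of higher order. A cleaner alternative is to compare Gram determinants: check that $\det R_b=\det R_0\cdot(1+O(t))$, and that the leading $2\times 2$ minor is likewise perturbed by a $1$-unit. Then the diagonal entries obtained by successive minors are multiplied by $1$-units.

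\textbf{Step 3: conclude.} Since the $w_i$ are squares, $R_b$ is $k_t$-equivalent to $R_0$. Its conic corresponds to the quaternion algebra $(\alpha(\alpha+1),t(\alpha+1))=(\alpha,t)$, using hypothesis~\ref{alpha+1 is a square}. At the place $t$ this algebra has invariant $\tfrac12$, because $\alpha$ is a nonsquare unit. Hence the conic has no $k_t$-point, and by Lemma~\ref{lemma2} neither does the quadric contain a $k_t$-line.

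I expect the main obstacle to be Step 2: bounding the cross terms $2\alpha tb\,u_2u_3$ and $2t^2b\,u_3u_4$ uniformly in $b\in\oo_t$. In particular one must check that, after completing the square in $u_2$, the induced change in the $u_3^2$ coefficient has valuation strictly greater than $3$. That correction is of size $(\alpha tb)^2/a_{22}=O(t^3)$. A naive estimate gives valuation exactly $3$ at worst, so the leading coefficient must be checked carefully, possibly after the substitution $u_2\mapsto u_2+tu_4$ used before. If this fails, I would instead compute $\det R_b$ exactly and find the Hasse invariant from the valuations of the determinant and of the $u_2^2$ coefficient.
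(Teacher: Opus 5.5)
Your strategy is the paper's. You split off the hyperbolic plane using the discriminant $-d^2(1+tb^2)$, diagonalise the ternary part, note that $b\in\oo_t$ only moves the diagonal entries by squares, and read off $(\alpha,t)_t\neq 0$. The paper writes out $\beta_2,\beta_3,\beta_4$ explicitly; these are exactly your successive-minor quotients $m_1,\ m_2/m_1,\ \det R_b/m_2$, with $\beta_2=t^2m_1$. It then reads off $(-\alpha t,t(\alpha+1))_t=(\alpha,t)_t$. Comparing with $b=0$, as you do, is the same computation packaged differently.

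However, the concrete target in Step 2 is wrong, and the obstacle you anticipate is an artefact of it. Your list $\frac{\alpha}{t},\frac{t^3}{\alpha},-\frac{(\alpha-1)^2t^2}{\alpha(\alpha+1)}$ mixes $R_0$ with the rescaled form $\frac{t}{\alpha}R_0$ from the proof of Lemma~\ref{lem: local solubility of Q_infinity}. Taken literally it is a different conic: its class is $(t,-1)$, which is trivial at $t$ when $p\equiv 1\bmod 4$. Pivoting on $u_2,u_3,u_4$ in $R_0$ itself gives $\frac{\alpha}{t}$, $t^2$, $-\frac{(\alpha-1)^2t}{\alpha+1}$, of valuations $-1,2,1$. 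With these valuations nothing is delicate: the correction $(\alpha tb)^2/a_{22}$ to the $u_3^2$ coefficient has valuation at least $3>2$. The minor comparison you left as ``check that'' settles the step directly:
\[
m_1=\tfrac{\alpha}{t}\Bigl(1+\tfrac{bt^3(1+t)}{\alpha}\Bigr),\qquad m_2=\alpha t\Bigl(1-\alpha tb^2+\tfrac{bt^3(1+t)}{\alpha}\Bigr),
\]
\[
\det R_b=-\tfrac{\alpha(\alpha-1)^2t^2}{\alpha+1}(1+tb^2)+\tfrac{3\alpha-1}{\alpha+1}\,bt^5(1+t)-b^3t^6(1+t).
\]
So each is its $b=0$ value times an element of $1+t\oo_t$. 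This uses only $\alpha\neq 0,1$ and $\alpha+1\neq 0$; the condition $3\alpha-1\neq 0$ plays no role here.

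The same computation shows $\det R_b\neq 0$. Hence $bQ_0+Q_\infty$ has rank $5$, as Lemma~\ref{lemma2} requires. You should state this explicitly; the paper obtains it from the fact that all roots of $f$ have positive $t$-valuation.
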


\begin{proof}
First note that all roots over $\Bar{k_t}$ of $f(x)$ have positive $t$-valuation and thus $1/b$ is not a root of $f(x)$, so $bQ_0+Q_{\infty}$ has rank 5. 
		
The quadratic form $bQ_0+Q_{\infty}$ is the orthogonal sum of the rank 2 form
\[
	d\left(b\,u_0^2 - 2\,u_0u_1 - \,tb\,u_1^2\right),
\]
and a rank 3 form which diagonalises as		\begin{equation}\label{eqn:rank3}   \beta_2u_2^2+\beta_3u_3^2+\beta_4u_4^2,
\end{equation}
where
\begin{align*}
	&\beta_2 = t^5b + t^4b + \alpha t, \\
	&\beta_3 = \frac{t^5b(t+1) - t^3\alpha^2b^2 + t^2\alpha}{t^3b(t+1) + \alpha}, \\
	&\beta_4 = \frac{-t^5(t+1)b^3(\alpha+1)+ t^4(t+1)b(3\alpha-1)- t^2b^2\,\alpha(\alpha-1)^2- t\,\alpha(\alpha-1)^2}{(\alpha+1)\bigl(t^3b(t+1) - t\alpha^2b^2 + \alpha\bigr)}.
\end{align*}
		
Since $b\in \oo_t$, the discriminant of the rank 2 form is 
\[
	-d^2(tb^2+1)\equiv -1 \mod k_t^{\times 2}.
\]
Thus the rank 2 form is hyperbolic over $k_t$. We apply Lemma \ref{lemma2} to $bQ_0+Q_{\infty}$ and deduce that $\mathbb{V}(bQ_0+Q_{\infty})$ contains a line over $k_t$ if and only if ~\eqref{eqn:rank3} has a $k_t$-point. Since $b\in \oo_t$, the Hilbert symbol associated to this conic is 
\[
(-\alpha t, t(\alpha+1))_t=(-\alpha t,\alpha(\alpha+1))_t,
\] 
which for our choice of $\alpha$ is non-trivial.
\end{proof}

\subsection{Brauer--Manin obstruction on $\mathcal{G}_d$}

In this section we prove $X_d$ admits no quadratic points under certain assumptions on $d$. \par
Let $\mathcal{G}_d$ be the fourfold $\pi: \mathcal{G}_d\rightarrow \PP^1_k$ parametrising lines on quadrics in the pencil associated to $X_d$, as in Section \ref{section:thevarietyG}. Since both $\epsilon_2$ and $\epsilon_3$ are nonsquares in their respective \'etale algebras, and both have square norms, by \cite[Proposition 5.1]{CreutzViray2023} we have that $\Br(\mathcal{G}_d)/\Br_0(\mathcal{G}_d)\cong \Z/2\Z$ is generated by 
\begin{equation}\label{eq: first rep for brauer group}
\mathcal{A}:=\pi^*(d\alpha t(t+1),x^2+t),  
\end{equation}
which can be also written in the form 
\begin{equation}\label{eq: second rep for brauer group}
\mathcal{A}=\pi^*\operatorname{Cor}_{k(\theta)/k}(\epsilon_3,x-\theta)+[\left(\mathcal{G}_d\right)_\infty]\in \Br(\mathcal{G}_d),
\end{equation}  
where by Lemma~\ref{lem: local solubility of Q_infinity} we have
\begin{equation*}
[\left(\mathcal{G}_d\right)_\infty]=(\alpha,t).
\end{equation*}

By \cite[Theorems 1.2 and 1.8]{reid1972complete}, the primes of bad reduction of $\mathcal{G}_d$ are the primes dividing $\Delta$, so let
\[
	S_d:=\{\nu \mid d \xi \} \cup \{t,t+1,\infty\},
\]
denote the set of primes of bad reduction of $\mathcal{G}_d$.\par
We first prove the associated local invariant maps are zero at the primes of good reduction:

\begin{lemma} \label{constant zero evaluation}
For any $d\in k^{\times}$ and for any prime $\nu \notin S_d$, we have $\inv_\nu\mathcal{A}=0$.   
\end{lemma}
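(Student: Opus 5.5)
By Proposition~\ref{Gisnice}, for every $\nu\notin S_d$ (a place of good reduction of $\mathcal{G}_d$) the evaluation map $\ev_{\mathcal{A}}\colon\mathcal{G}_d(k_\nu)\to\Br(k_\nu)$ is constant. So it suffices to exhibit, for each such $\nu$, a single point $P_\nu\in\mathcal{G}_d(k_\nu)$ with $\inv_\nu\mathcal{A}(P_\nu)=0$. If $\mathcal{G}_d(k_\nu)=\varnothing$ there is nothing to prove, although in fact nonemptiness follows from the construction below.

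To produce $P_\nu$ I will use the description \eqref{eq: first rep for brauer group}, namely $\mathcal{A}=\pi^*(d\alpha t(t+1),x^2+t)$. The plan is to find a point $x_0\in\PP^1(k_\nu)$ such that:
\begin{enumerate}
\item the fibre $\pi^{-1}(x_0)$ has a $k_\nu$-point;
\item the quaternion algebra $(d\alpha t(t+1),x_0^2+t)$ is split over $k_\nu$.
\end{enumerate}

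The natural first candidate is $x_0=\infty$. There the representative $(d\alpha t(t+1),x^2+t)$ degenerates, so I would instead evaluate via the equivalent expression using the homogenised coordinate, namely $(d\alpha t(t+1),1+tx^{-2})$ at $x^{-1}=0$, which is trivial. By Lemma~\ref{lem: local solubility of Q_infinity}, $\mathbb{V}(Q_\infty)$ contains a $k_\nu$-line for every $\nu\notin\{t,\infty\}\supseteq$ complement of $S_d$. Hence $\pi^{-1}(\infty)(k_\nu)\neq\varnothing$, and the value at that point is $0$.

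The subtlety is that $\infty$ may be a point where the chosen symbol representative is not defined, since $x^2+t$ has a pole there. I would handle this by rewriting $\mathcal{A}=\pi^*(d\alpha t(t+1),(x^2+t)/x^2)$, which differs by $(d\alpha t(t+1),x^2)=0$. This representative is regular at $x=\infty$ with value $(d\alpha t(t+1),1)=0$, provided the fibre at $\infty$ is smooth; that holds because $\infty$ is not a root of $f$, since $\deg f=5$.

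If this causes trouble, the fallback is a unit argument. For $\nu\notin S_d$, $\nu\nmid dt(t+1)$, so $d\alpha t(t+1)\in\OO_\nu^\times$. I would choose $x_0\in\OO_\nu$ with $x_0^2+t$ a $\nu$-unit and the fibre soluble (for instance via Chevalley–Warning/Hensel on the good-reduction model). The tame symbol of two units is then trivial. This gives invariant $0$, and constancy finishes the proof.

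The main obstacle I expect is making the evaluation at $x_0=\infty$ rigorous, i.e.\ checking that the rewritten representative genuinely extends over the fibre at $\infty$. I expect this to be straightforward.
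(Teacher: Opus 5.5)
Your argument is correct and is essentially the paper's proof. You use constancy of the evaluation at places of good reduction, a $k_\nu$-point in the fibre over $x=\infty$ supplied by Lemma~\ref{lem: local solubility of Q_infinity} (since $t,\infty\in S_d$), and the rewriting $\mathcal{A}=\pi^*(d\alpha t(t+1),(x^2+t)/x^2)$, which is regular at $x=\infty$ with value $(d\alpha t(t+1),1)=0$; the unit-argument fallback is not needed.
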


\begin{proof}
Since $\mathcal{G}_d$ has good reduction at $\nu$ , by Proposition~\ref{proposition: good reduction at a place imples constant inv map}  we have that $\inv_\nu\mathcal{A}$ is constant. Thus it suffices to prove $\inv_\nu\mathcal{A}(Q)=0$ when $Q$ is a $k_\nu$-rational line on $Q_\infty$ (which exists by Lemma~\ref{lem: local solubility of Q_infinity}). We first rewrite 
\begin{align*}
\mathcal{A}&=\pi^*(d\alpha t(t+1), x^2+t)\\
&=\pi^*(d\alpha t(t+1), (x^2+ty^2)/y^2)\\
&= \pi^*(d\alpha t(t+1), (x^2+ty^2)/x^2) \in \pi^*(\Br(k(\Proj^1))),
\end{align*}
so that
\begin{align*}
\inv_\nu\mathcal{A}(Q)&=\inv_\nu\pi^*(d\alpha t(t+1), (x^2+ty^2)/x^2)\lvert_{y=0}\\
&=\inv_\nu\pi^*(d\alpha t(t+1), 1)\\
&=0.\qedhere
\end{align*}
\end{proof}

We now exhibit a Brauer--Manin obstruction on $\mathcal{G}_d$ under some assumptions on $d$:
\begin{lemma}\label{prop: under hypothesis BMO to quad point}
Let $D \in \F_p[t]$ such that the following hold:
\begin{enumerate}
	\item \label{cond: ep3 is a square for all v/d} For all primes $\nu\mid D$ with $\nu \neq t$,  $\epsilon_3$ is a square in $k_\nu \otimes k(\theta)$;
	\item \label{cond: d or dy is a square at ky}$\alpha D$ is a square in $k_t$;
	\item \label{cond: dx square at infinity}$D$ has odd degree and leading coefficient equal to $-1$;
	\item \label{cond: get rid of primes dividing delta}For all primes $\nu\mid \xi(t+1)$, either $D$ is a square in $k_\nu$ or $\epsilon_3$ is a square in $k_\nu\otimes k(\theta)$.  
\end{enumerate}
For such $D$, let $d=D/\alpha t(t+1)$. Then $\mathcal{G}_d(k)=\varnothing$. 
\end{lemma}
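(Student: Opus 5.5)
Since $\mathcal{G}_d(k)\subseteq\mathcal{G}_d(\mathbb{A}_k)^{\Br}$, it suffices to show that for every adelic point $(x_\nu)_\nu\in\mathcal{G}_d(\mathbb{A}_k)$ the sum $\sum_\nu \inv_\nu \mathcal{A}(x_\nu)$ equals $1/2$. If $\mathcal{G}_d(\mathbb{A}_k)=\varnothing$ there is nothing to prove. The plan is to show that $\inv_\nu\mathcal{A}$ vanishes identically on $\mathcal{G}_d(k_\nu)$ for every $\nu\neq t$, and is identically $1/2$ at $\nu=t$.

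With $d=D/\alpha t(t+1)$ we have $d\alpha t(t+1)=D$ modulo squares. The first representation \eqref{eq: first rep for brauer group} therefore becomes $\mathcal{A}=\pi^*(D,x^2+t)$. For $\nu\notin S_d$, Lemma~\ref{constant zero evaluation} gives $\inv_\nu\mathcal{A}=0$. For the remaining places I will use whichever representation is convenient.

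\emph{Places where $D$ is a square in $k_\nu$.} Here $\pi^*(D,x^2+t)$ is locally trivial, so its invariant is $0$.

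\emph{Places where $\epsilon_3$ is a square in $k_\nu\otimes k(\theta)$.} Here $\operatorname{Cor}(\epsilon_3,x-\theta)$ is locally trivial. By \eqref{eq: second rep for brauer group} the invariant is then that of $(\alpha,t)$ at $\nu$, which is $0$ for $\nu\notin\{t,\infty\}$.

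These two cases cover the relevant places as follows:
\begin{itemize}
\item $\nu\mid D$ with $\nu\neq t$, by condition~\ref{cond: ep3 is a square for all v/d};
\item $\nu\mid\xi(t+1)$, by condition~\ref{cond: get rid of primes dividing delta};
\item $\nu\mid d$ more generally: the primes dividing $d$ divide $D$, $t$ or $t+1$ (numerator or denominator), so they are already handled except $t$.
\end{itemize}
Condition~\ref{cond: ep3 is a square for all v/d} concerns $\nu\mid D$ and condition~\ref{cond: get rid of primes dividing delta} concerns $\nu\mid \xi(t+1)$. The only places left are $\nu=\infty$ and $\nu=t$.

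\emph{The place $\nu=\infty$.} Since $D$ has odd degree and leading coefficient $-1$, we can write $D=-t^{2m+1}u$ with $u\equiv1$, so $D\equiv -t$ modulo squares in $k_\infty$. Thus $\mathcal{A}=\pi^*(-t,x^2+t)$ locally. Writing $x^2+t=x^2-(-t)\cdot 1^2$, this is a norm from $k_\infty(\sqrt{-t})$, at least where $x^2+t\neq0$. So the symbol is trivial on a dense open set, hence identically $0$ by continuity and local constancy. This step is delicate and needs care: one should handle the fibres over $x=\infty$ by using the homogeneous form $(x^2+ty^2)/y^2$ as in Lemma~\ref{constant zero evaluation}, and similarly note that $(-t,x^2+t)$ is trivial as a symbol over the function field $k_\infty(x)$. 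The cleaner statement is that the quaternion algebra $(a,x^2-ay^2)$ is split, so $\mathcal{A}_{k_\infty}=0$ in $\Br$.

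\emph{The place $\nu=t$ (main step).} By condition~\ref{cond: d or dy is a square at ky}, $D\equiv\alpha$ modulo squares in $k_t$, so $\mathcal{A}=\pi^*(\alpha,x^2+t)$ there. Take a local point lying on the line of $Q_s$ with $s=\pi(x_t)$, writing the pencil as $Q_0+xQ_\infty$. By Lemma~\ref{lemma4}, no quadric $bQ_0+Q_\infty$ with $b\in\mathcal{O}_t$ contains a $k_t$-line, so $s=1/b$ must satisfy $v_t(s)\ge1$, i.e. $s\in t\mathcal{O}_t$ (including possibly $s=0$, where the quadric is $Q_0$). Then $s^2+t=t(1+s^2/t)$. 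I would split into the following cases:
\begin{itemize}
\item If $v(s)\ge1$ and $v(s^2/t)\ge1$, then $1+s^2/t$ is a unit square, so $s^2+t\equiv t$ and $\inv_t(\alpha,t)=1/2$, since $\alpha$ is a nonsquare unit.
\item If $v(s)=1$, then $s^2+t$ has valuation $1$ and is still $t$ times a unit. Since $(\alpha,\text{unit})=0$, the invariant is again $1/2$. In general any $s$ with $v(s)\ge1$ gives $v(s^2+t)=1$, which is odd.
\item If $s=0$, use the homogeneous form or continuity; the value is again $1/2$.
\end{itemize}

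Summing the invariants gives $1/2\neq0$ for every adelic point, so $\mathcal{G}_d(\mathbb{A}_k)^{\Br}=\varnothing$ and hence $\mathcal{G}_d(k)=\varnothing$. The main obstacle is the place $t$, namely pinning down the possible fibres using Lemma~\ref{lemma4}; the archimedean-like place $\infty$ is the other point needing care.
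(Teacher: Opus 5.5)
Your proposal is correct and follows essentially the same route as the paper. You evaluate place by place using the two representations of $\mathcal{A}$, and conditions (1) and (4) kill the invariants at the bad primes away from $t$ and $\infty$. At $t$, Lemma~\ref{lemma4} forces $\pi(Q)\in t\mathcal{O}_t$, so $s^2+t\equiv t$ and $\inv_t\mathcal{A}=\inv_t(\alpha,t)=1/2$; your sub-cases there are redundant, since $v(s)\ge 1$ already suffices.

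The only real deviation is at $\infty$. You observe that $(-t,x^2+t)$ is already split over $k_\infty(x)$, because $x^2+t$ is a norm from $k_\infty(x)(\sqrt{-t})$. This is a slicker substitute for the paper's case split on $v_\infty(x_0)$, and it avoids needing that $Q_\infty$ contains no $k_\infty$-line.
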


\begin{proof}
It suffices to prove $\mathcal{G}_d$ has a Brauer--Manin obstruction to the Hasse principle. \par
Recall that by \eqref{eq: second rep for brauer group} and Lemma \ref{lem: local solubility of Q_infinity}, the Brauer element $\mathcal{A}=\pi^*(D,x^2+t) $ can be represented as \[  \mathcal{A}=\pi^*\operatorname{Cor}_{k(\theta)/k}(\epsilon_3,x-\theta)+ (\alpha, t).\]
By Lemma~\ref{constant zero evaluation} for all $\nu\notin S_d$, we have $\inv_\nu\mathcal{A}=0$. 
        
For $\nu=\infty$, let $Q\in \mathcal{G}_d(k_\infty)$ and let $\pi(Q)=x_0 \in \PP^1_{k_\infty}$. By Lemma~\ref{lem: local solubility of Q_infinity}, we have that $x_0 \neq \infty \in \Proj^1_{k_\infty}$, and hence we may view $x_0\in k_\infty$. If $v_\infty(x_0)<0$, then $x_0^2+t $ is a square in $k_{\infty}$ and hence we have
\begin{equation*}
\inv_{\infty}Q^*\mathcal{A}=\left(D,x_0^2+t\right)_\infty=0.
\end{equation*}
         
If $v_\infty(x_0) \geq 0$, then $x_0^2+t = t\in k_{\infty}^{\times}/(k_{\infty}^{\times})^2$. Since $D$ is odd degree and has leading coefficient equal to $-1$ we have that $D= -t\in k_{\infty}^{\times}/(k_{\infty}^{\times})^2$. In particular we have 
\begin{equation*}
\inv_{\infty}Q^*\mathcal{A}=\left(D,x_0^2+t\right)_\infty=\left(-t,t\right)_\infty=0.
\end{equation*}
Therefore $\inv_\infty Q^*\mathcal{A}=0$ for all $Q\in \mathcal{G}_d(k_\infty)$.\par
For $\nu\mid D$ and $\nu \neq t$, by \eqref{cond: ep3 is a square for all v/d} the second representation of  $\mathcal{A}$ yields $\inv_\nu\mathcal{A}=\inv_\nu(\alpha,t)=0$. 

For $\nu\mid \xi(t+1)$, by \eqref{cond: get rid of primes dividing delta} either $ D$ is a square in $k_\nu$ or $\epsilon_3$ is a square in $k_\nu\otimes k(\theta)$. If the former holds, then we have $\inv_\nu\mathcal{A}=\inv_{\nu}(1,x^2+t)=0$. If the latter holds, then we have $\inv_\nu\mathcal{A}=\inv_\nu(\alpha,t)=0$. 
\par
For $\nu=t$, by Lemma~\ref{lemma4}, if $Q\in\mathcal{G}_d(k_t)$, then $\pi(Q)=[ta:1]$, for $a\in \OO_{t}$. In particular, since \eqref{cond: d or dy is a square at ky} gives that $\alpha D$ is a square in $k_t$ we have
\begin{equation*} \inv_t\mathcal{A}=\inv_t\left(D,t^2a^2+t\right)=\inv_t\left(\alpha,t\right)+\inv_t\left(\alpha,ta^2+1\right)=\frac{1}{2}.
\end{equation*}
Therefore for any $(Q_\nu)_{\nu}\in\prod_{\nu\in\Omega_k}\mathcal{G}_d(k_\nu)$ we have
\begin{equation*}
\sum_{\nu\in\Omega_k}\inv_\nu\mathcal{A}(Q_\nu)=\frac{1}{2},
\end{equation*}
and hence there is a Brauer--Manin obstruction to the Hasse principle on $\mathcal{G}_d$.
\end{proof}
	
\subsection{Existence of $X_d$ for $p\notin\{3,7,11\}$} \label{sec: existence of Xd for p not 3,7,11}
    
In this section we will prove Theorem~\ref{thm:maindp4} in the case $p\not\in\{3,7,11\}$. We will do this by utilising Chebotarev's density theorem over global function fields. Unless explicitly stated otherwise, the results in this section hold for all $p>3$.
    
Let $K=\F_q(C)$ be a global function field, for some curve $C/\F_q$, and let $L/K$ be a finite Galois extension. For a prime $P$ in $K$, we define the degree of the prime to be the degree of its residue field. We denote by $(P,L/K)$ the Artin conjugacy class of $P$ (see \cite[Page~122]{rosen} for precise definition). The extension $L/K$ is \textbf{geometric} if $\F_q$ is algebraically closed in $L$.
    
\begin{theorem}\cite[Theorem 9.13B]{rosen}\label{theorem:CDTforGFFs}
Let $L/K$ be a geometric Galois extension of global function fields with $G=\Gal(L/K)$. Let $C\subset G$ be a non-empty conjugacy class. For every sufficiently large integer $N$, there exists a prime $P$ of degree $N$ with $(P,L/K)=C$. 
\end{theorem}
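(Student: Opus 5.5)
This is Chebotarev's density theorem for global function fields, quoted from Rosen, so I would not reprove it from scratch. Instead I would reduce it to the standard counting statement and check that the error term is eventually smaller than the main term. The key input is the function-field Chebotarev estimate. Let $L/K$ be geometric Galois with $G=\Gal(L/K)$, let $C\subset G$ be a conjugacy class, and let $K$ have constant field $\F_q$ and genus $g_K$. Then the number $\pi_C(N)$ of primes $P$ of degree $N$, unramified in $L$, with $(P,L/K)=C$ satisfies
\[
\left|\pi_C(N)-\frac{\#C}{\#G}\,\frac{q^N}{N}\right| \ll_{L} \frac{q^{N/2}}{N},
\]
with an implied constant depending only on $g_L$, $[L:K]$ and the ramification data.

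To derive this estimate I would proceed in three steps.

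First, fix $\sigma\in C$ and let $F=L^{\langle\sigma\rangle}$. Because $L/K$ is geometric, the constant field of $L$ is still $\F_q$. I would count degree-one places of $L$ over primes of $F$ of appropriate degree whose Frobenius in the cyclic extension $L/F$ is exactly $\sigma$. Via the standard "Frobenius twist", these correspond to $\F_{q^N}$-rational points on the curve for $L$ twisted by $\sigma$.

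Second, I would apply the Weil bound (the Riemann hypothesis for curves over finite fields) to these twisted curves. This gives that the number of such points equals $q^N+O(g_L q^{N/2})$.

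Third, I would pass back to primes of $K$ by the usual orbit-counting. I would discard the ramified primes, which are finitely many, and the primes of degree a proper divisor of $N$, which contribute only $O(q^{N/2})$. Together these steps yield the displayed estimate.

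Once the estimate is in hand, the conclusion follows directly. The main term $\frac{\#C}{\#G}\frac{q^N}{N}$ grows like $q^N/N$, while the error is $O(q^{N/2}/N)$. So for all $N$ larger than a bound depending on $g_L$, $[L:K]$ and the ramification, we get $\pi_C(N)>0$. This says exactly that for every sufficiently large $N$ there is a prime $P$ of degree $N$ with $(P,L/K)=C$.

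I expect the main obstacle to be the geometricity hypothesis in the twisting step. If $L$ had a larger constant field $\F_{q^m}$, the Frobenius of a degree-$N$ prime would be forced to restrict to a fixed power of the arithmetic Frobenius on constants. Then $C$ would be attained only for $N$ in a congruence class mod $m$, and the statement would fail. Geometricity is what makes every class attainable in every large degree. In the paper I would simply cite \cite[Theorem 9.13B]{rosen}, noting that $\F_q=\F_p$ and $K=\F_p(t)$ there. The later application must then verify that the relevant extension (presumably the compositum of $k(\theta)^{\Gal}$ with square-root extensions such as $k(\sqrt{\alpha t})$, $k(\sqrt{-1})$) is geometric over $k$.
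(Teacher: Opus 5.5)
Your proposal is correct and agrees with the paper where it matters. The paper gives no argument for this statement and imports it from Rosen as a black box, which is exactly what you propose.

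The sketch you add for the underlying estimate $\pi_C(N)=\frac{\#C}{\#G}\frac{q^N}{N}+O(q^{N/2}/N)$ takes a different route from the usual textbook one. That route isolates $C$ by orthogonality of the characters of $G$ and bounds the resulting character sums using the Riemann hypothesis for the $L$-functions of $L/K$. This character-theoretic approach treats all classes uniformly and is the framework (Hecke and Artin $L$-series) of the chapter of Rosen being cited. Your twisting argument needs only the Weil bound for the curve of $L$ and its twists. It also makes the role of geometricity transparent: because $L$ has constant field $\F_q$, the group $G$ acts by $\F_q$-automorphisms commuting with Frobenius, and the twist is geometrically irreducible of genus $g_L$.

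Two small repairs are needed.

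First, the sentence about $F=L^{\langle\sigma\rangle}$ does not work as written. An unramified prime of $F$ whose Frobenius in $L/F$ is the generator $\sigma\neq 1$ is inert in $L/F$, so the places of $L$ above it are not of degree one. The clean version bypasses $F$ and counts $\bar{\F}_q$-points $y$ of the curve of $L$ with $\mathrm{Frob}_q^N(y)=\sigma(y)$, i.e.\ $\F_{q^N}$-points of the twist by $\sigma$. Each unramified prime of degree $N$ with Frobenius class $C$ accounts for exactly $N\#G/\#C$ of these points, and that is the orbit count you need.

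Second, your guess about the later application is off. The field $k(\sqrt{-1})$ is either $k$ or the constant extension $\F_{p^2}(t)$, so including it could only destroy geometricity. The paper instead uses $k(\sqrt{\epsilon_3})^{\Gal}\,k(\sqrt{t+1})\prod_{\nu\mid\xi}k(\sqrt{q_{\nu,1}})$, and checks its geometricity in Lemma~\ref{lemma:geometric}.
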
 	  

We will use the quadratic reciprocity law for function fields. First, we recall the Legendre symbol in the classical setting: for $c\in \mathbb F_p^{\times},$ we define
\begin{equation*}
\left(\frac{c}{p}\right)=\begin{cases}
			1, &\text{if }c\text{ is a square in }\mathbb F_p,\\
			-1, &\text{ otherwise}.
		\end{cases}   
\end{equation*}
Now let $a\in \mathbb F_p[t]$ and let $\nu\in \Omega_k\backslash\{\infty\}$. Suppose $a\in \oo^{\times}_{\nu} $. We define the Legendre symbol 
\begin{equation*}
\bigg(\frac{a}{\nu}\bigg)=\begin{cases}
			1, &\text{if }a\text{ is a square modulo }\nu,\\
			-1, &\text{ otherwise}.
		\end{cases}   
\end{equation*}
If the prime ideal $\nu$ is generated by an irreducible polynomial $b\in \mathbb F_p[t],$ we write $\left( \frac{a}{b} \right)$ to mean $\left( \frac{a}{\nu} \right)$. 
\begin{theorem}\cite[Theorem~3.5]{rosen}\label{theorem:reciprocity}
Let $a,b\in \F_p[t]$ be two distinct non-zero irreducible polynomials. Then
\[
	\bigg( \frac{a}{b} \bigg)= \left(\frac{-1}{p}\right)^{\deg(a)\deg(b)}\left(\frac{\operatorname{lead}(a)}{p}\right)^{\deg(b)}\left(\frac{\operatorname{lead}(b)}{p}\right)^{\deg(a)}\bigg( \frac{b}{a} \bigg),
\]
where $\operatorname{lead}(a),\operatorname{lead}(b)$ denote the leading coefficients of $a,b$ respectively.
\end{theorem}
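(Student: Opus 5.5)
The plan is to reduce both Legendre symbols to the classical Legendre symbol of a resultant, and then use the antisymmetry of the resultant. Throughout, write $\chi$ for the Legendre character on $\F_p$, let $m=\deg(b)$ and $n=\deg(a)$, and note that $a$ and $b$ are coprime since they are distinct irreducibles (up to units; when one of them is constant the degree exponents make the statement degenerate in a way I will check separately).

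\textbf{Step 1: Euler's criterion.} The residue field $\F_p[t]/(b)$ is $\F_{p^m}$. Its multiplicative group is cyclic of even order $p^m-1$, so
\[
\Bigl(\frac{a}{b}\Bigr)\equiv a^{(p^m-1)/2}\pmod b.
\]
Fix a root $\beta$ of $b$ in $\F_{p^m}$. Then $(a/b)$ equals $a(\beta)^{(p^m-1)/2}\in\{\pm1\}$.

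\textbf{Step 2: pass to the norm.} Since $(p^m-1)/2=\frac{p^m-1}{p-1}\cdot\frac{p-1}{2}$ and $\frac{p^m-1}{p-1}=1+p+\dots+p^{m-1}$, we get
\[
a(\beta)^{(p^m-1)/2}=\Bigl(\prod_{j=0}^{m-1}a(\beta)^{p^j}\Bigr)^{(p-1)/2}=\chi\bigl(N_{\F_{p^m}/\F_p}(a(\beta))\bigr).
\]
The norm is $\prod_j a(\beta^{p^j})$, the product of $a$ over all roots of $b$. It is therefore
\[
\operatorname{lead}(b)^{-n}\operatorname{Res}(b,a).
\]
Hence
\[
\Bigl(\frac{a}{b}\Bigr)=\chi(\operatorname{lead}(b))^{n}\,\chi(\operatorname{Res}(b,a)),
\]
using $\chi(c^{-1})=\chi(c)$.

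\textbf{Step 3: swap the roles.} By symmetry,
\[
\Bigl(\frac{b}{a}\Bigr)=\chi(\operatorname{lead}(a))^{m}\,\chi(\operatorname{Res}(a,b)).
\]
The standard identity $\operatorname{Res}(b,a)=(-1)^{mn}\operatorname{Res}(a,b)$ then gives
\[
\Bigl(\frac{a}{b}\Bigr)=\chi(-1)^{mn}\,\chi(\operatorname{lead}(b))^{n}\,\chi(\operatorname{lead}(a))^{m}\Bigl(\frac{b}{a}\Bigr),
\]
using that $\chi(\operatorname{lead}(a))^{2m}=1$. This is exactly the stated formula, with $(-1/p)=\chi(-1)$ and so on.

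The only delicate step is the bookkeeping of leading coefficients and signs in the resultant convention, specifically the claim that $\prod_{b(\beta)=0}a(\beta)=\operatorname{lead}(b)^{-\deg a}\operatorname{Res}(b,a)$. I would fix the Sylvester-matrix convention and verify that identity and the antisymmetry directly from the factorisation $\operatorname{Res}(b,a)=\operatorname{lead}(b)^{n}\operatorname{lead}(a)^{m}\prod_{i,j}(\beta_i-\alpha_j)$.
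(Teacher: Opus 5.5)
Your argument is correct. The paper gives no proof of this statement; it simply quotes it from Rosen. Your proof follows the standard argument: Euler's criterion in $\F_p[t]/(b)\cong\F_{p^{\deg b}}$, rewriting $a(\beta)^{(p^{\deg b}-1)/2}$ as $\chi$ of the norm $\prod_j a(\beta^{p^j})$, and picking up the sign $(-1)^{\deg(a)\deg(b)}$ when the double product $\prod_{i,j}(\beta_i-\alpha_j)$ is reversed.

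The one real difference is how leading coefficients are handled. Rosen first proves the law for monic irreducibles, where no leading-coefficient factors appear. He then deduces the general formula, for arbitrary coprime nonzero $a,b$ and for $d$-th power residues, from multiplicativity of the symbol and the formula $\bigl(\frac{c}{b}\bigr)=\chi(c)^{\deg b}$ for $c\in\F_p^{\times}$. You instead carry $\operatorname{lead}(a)$ and $\operatorname{lead}(b)$ through the resultant identities $\operatorname{Res}(b,a)=\operatorname{lead}(b)^{\deg a}\prod_{b(\beta)=0}a(\beta)$ and $\operatorname{Res}(b,a)=(-1)^{\deg a\deg b}\operatorname{Res}(a,b)$. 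This treats non-monic irreducibles in one pass. Your version is shorter and covers exactly the case the paper uses; Rosen's is more general.

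Two minor points. First, irreducible polynomials are non-constant by definition, so there is no degenerate constant case to check. Second, the hypothesis actually needed is that $a$ and $b$ are non-associate: for instance $a=2b$ is ``distinct'' from $b$, but $\bigl(\frac{a}{b}\bigr)$ is undefined. You rightly flagged this, and it holds in the paper's application in Lemma~\ref{lem: quad rec trick for changing reps}.
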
 
	
\begin{lemma}\label{lem: quad rec trick for changing reps}
Let $\nu\in \Omega_k\setminus\{\infty\}$ be a prime and let $\delta\in\{-1,1\}$. There exists a polynomial $q_{\nu,\delta}\in \mathbb F_p[t]$ generating $\nu$ as an ideal, such that for any $\nu'\in \Omega_k\setminus\{\infty,\nu\}$ generated by an odd degree polynomial $D$ with $\operatorname{lead}D=-1$, we have that $\nu'$ splits in $k(\sqrt{q_{\nu,\delta}})$ if and only if
\begin{equation*}
	\left(\frac{D}{\nu}\right)=\delta.
\end{equation*}
\end{lemma}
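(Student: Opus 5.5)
The plan is to take $q_{\nu,\delta}$ to be a suitable scalar multiple $c\,\pi_\nu$ of the monic irreducible generator $\pi_\nu$ of $\nu$, with $c\in\F_p^\times$, and to choose $c$ using Theorem~\ref{theorem:reciprocity}. Since $\nu'$ is generated by the irreducible polynomial $D$ and $\nu'\neq\nu$, the element $q_{\nu,\delta}$ is a unit at $\nu'$. The characteristic is odd, so $\nu'$ is unramified in $k(\sqrt{q_{\nu,\delta}})$, and it splits if and only if $q_{\nu,\delta}$ is a square modulo $D$, i.e.\ if and only if $\left(\frac{q_{\nu,\delta}}{D}\right)=1$. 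By multiplicativity of the Legendre symbol,
\[
\left(\frac{c\,\pi_\nu}{D}\right)=\left(\frac{c}{D}\right)\left(\frac{\pi_\nu}{D}\right).
\]
For a constant $c\in\F_p^\times$ we have $\left(\frac{c}{D}\right)=\left(\frac{c}{p}\right)^{\deg D}$, because the residue field $\F_p[t]/D$ has degree $\deg D$ over $\F_p$ and $c$ is a square in $\F_{p^{\deg D}}$ if and only if $c^{(p^{\deg D}-1)/2}=1$, that is, if and only if $\left(\frac{c}{p}\right)^{\deg D}=1$. Since $\deg D$ is odd, this gives $\left(\frac{c}{D}\right)=\left(\frac{c}{p}\right)$.

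Next apply Theorem~\ref{theorem:reciprocity} to the irreducibles $\pi_\nu$ and $D$, writing $m=\deg\pi_\nu$. Using $\operatorname{lead}\pi_\nu=1$, $\operatorname{lead}D=-1$ and $\deg D$ odd, we get
\[
\left(\frac{\pi_\nu}{D}\right)=\left(\frac{-1}{p}\right)^{m\deg D}\left(\frac{-1}{p}\right)^{m}\left(\frac{D}{\pi_\nu}\right)=\left(\frac{-1}{p}\right)^{2m}\left(\frac{D}{\nu}\right)=\left(\frac{D}{\nu}\right).
\]
The key point is that, once $\deg D$ is odd and $\operatorname{lead}D=-1$ are fixed, the sign factor depends only on $\nu$, and here it is in fact trivial. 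Hence $\left(\frac{q_{\nu,\delta}}{D}\right)=\left(\frac{c}{p}\right)\left(\frac{D}{\nu}\right)$.

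To finish, take $c=1$ if $\delta=1$, and take $c$ to be a nonsquare in $\F_p^\times$ if $\delta=-1$. In both cases $\left(\frac{c}{p}\right)=\delta$, so $\nu'$ splits if and only if $\delta\left(\frac{D}{\nu}\right)=1$, which is equivalent to $\left(\frac{D}{\nu}\right)=\delta$. The only real care needed is bookkeeping of the sign factors in the reciprocity law, together with the identity $\left(\frac{c}{D}\right)=\left(\frac{c}{p}\right)^{\deg D}$ for constants. It is worth noting that $D$ could equal $t$ or $\nu$ could be $t$; this causes no problem, since the only requirement is $\nu'\neq\nu$, which ensures the two polynomials are distinct irreducibles as Theorem~\ref{theorem:reciprocity} requires.
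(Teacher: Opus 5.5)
Your proof is correct and follows essentially the same route as the paper. Both take $q_{\nu,\delta}$ to be the monic generator of $\nu$ or a nonsquare constant multiple of it, and both apply Theorem~\ref{theorem:reciprocity} using that $\deg D$ is odd and $\operatorname{lead}D=-1$. The only cosmetic difference is that you split off the constant via $\left(\frac{c}{D}\right)=\left(\frac{c}{p}\right)^{\deg D}$, whereas the paper lets it enter through the leading-coefficient factor of the reciprocity law applied to $\alpha q$.
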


\begin{proof}
Let $q$ be a monic polynomial in $\mathbb F_p[t]$ generating the ideal $\nu$. By Theorem~\ref{theorem:reciprocity} we have
\begin{equation*}
	\left(\frac{D}{q}\right)=\left(\frac{-1}{p}\right)^{\deg q}  \left(\frac{\operatorname{lead}(D)}{p}\right)^{\deg q} \left(\frac{\operatorname{lead}(q)}{p}\right)^{\deg D} \bigg(\frac{q}{D}\bigg) = \bigg(\frac{q}{D}\bigg).
\end{equation*}
Similarly, we have
\begin{equation*}
	\left(\frac{D}{\alpha q}\right)= \bigg(\frac{\alpha q}{D}\bigg)\left(\frac{\operatorname{lead}(\alpha q)}{p}\right)=-\bigg(\frac{\alpha q}{D}\bigg).
\end{equation*}
Since $q$ and $\alpha q$ generate the same prime $\nu$, we have $\left(\frac{-}{q}\right)=\left(\frac{-}{\alpha q}\right)=\Big(\frac{-}{\nu}\Big)$ and we obtain
\begin{equation*}
	\bigg(\frac{q}{D}\bigg)=   \left(\frac{D}{\nu}\right)= -\bigg(\frac{\alpha q}{D}\bigg).
\end{equation*}
Let 
\begin{equation*}
	q_{\nu,\delta}=\begin{cases}
				q, &\text{if }\delta=1,\\
				\alpha q, &\text{if }\delta = -1.
			\end{cases}   
\end{equation*} 
We have that $\nu'$ splits in $k(\sqrt {q_{\nu,\delta}})$ if and only if $\left(\frac{q_{\nu,\delta}}{D}\right)=1$ (see \cite[Proposition 10.6]{rosen}), and this is equivalent to $\left(\frac{D}{\nu}\right)=\delta$ by our choice of $q_{\nu,\delta}$.  
\end{proof}

\begin{lemma}\label{galois group is S_4}
Let $\epsilon_3$ be as in \eqref{eq: definition of ep2 and ep3}, then we have $\Gal(k(\sqrt{\epsilon_3})^{\Gal}/k)\cong S_4$.
\end{lemma}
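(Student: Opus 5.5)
Here $k(\sqrt{\epsilon_3})$ means $k(\theta)(\sqrt{\epsilon_3})$, a degree $6$ extension of $k$ when $\epsilon_3$ is a nonsquare in $k(\theta)$. Its Galois closure embeds in the wreath product $C_2\wr S_3 = C_2^3\rtimes S_3$, of order $48$. The plan is to show the Galois group is the index-two subgroup isomorphic to $S_4$.

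\textbf{Step 1: the group $G$ sits inside $C_2^3\rtimes S_3$.} Write $\theta_1,\theta_2,\theta_3$ for the roots of $f_3$ and $\epsilon_3^{(i)}=-\alpha(t+1)\theta_i$ for the conjugates of $\epsilon_3$. The Galois closure is
\[
k(\theta_1,\theta_2,\theta_3,\sqrt{\epsilon_3^{(1)}},\sqrt{\epsilon_3^{(2)}},\sqrt{\epsilon_3^{(3)}}),
\]
so $G$ embeds in $C_2^3\rtimes S_3$. The norm of $\epsilon_3$ is a square (as already used for the Brauer group), so $\prod_i\sqrt{\epsilon_3^{(i)}}\in k$. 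Hence $G$ lies in the kernel of the homomorphism $C_2^3\rtimes S_3\to C_2$ given by summing the $C_2^3$ coordinates. That kernel is $C_2^2\rtimes S_3\cong S_4$. It remains to show this inclusion is an equality, i.e.\ that $|G|=24$.

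\textbf{Step 2: the cubic part is $S_3$.} I would show $\Gal(f_3)\cong S_3$ by checking that $f_3$ is irreducible over $k$ and that $\Delta_{f_3}$ is not a square. The discriminant $\Delta_{f_3}=2^2t^3\xi/(\alpha^4(\alpha-1)^8)$ has odd $t$-adic valuation as long as $t\nmid\xi$. This holds because $\xi(0)=-\alpha^4(\alpha-1)^8\neq 0$, using $\alpha\ne 1$, which follows from Lemma~\ref{lem:choiceofalpha}\eqref{alpha is not a square}. So $\Delta_{f_3}$ is a nonsquare. For irreducibility, I would look at the Newton polygon of $f_3$ at $t$. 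The coefficients have $t$-valuations $0,3,1,4$ for $x^3,x^2,x,1$, so the polygon has slopes $1/2,1/2,3$. One root has valuation $3$; I would check whether the pair of valuation $1/2$ forms an irreducible quadratic factor over $k_t$. If $f_3$ had a root in $k$, then $f_3$ would split over $k_t$ as degree $1+2$. That alone is no contradiction, so I would supplement it with the Newton polygon at $t+1$ or at $\infty$, or argue via a rational root test: a root in $k$ is a polynomial dividing $t^4(t+1)(\alpha+1)/(\alpha(\alpha-1)^2)$, and there are finitely many candidates to check.

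\textbf{Step 3: the kernel part is all of $C_2^2$.} I must show $G\cap C_2^3$ has order $4$. By Galois theory over $M=k(\theta_1,\theta_2,\theta_3)$, this amounts to showing that no $\epsilon_3^{(i)}$, and no product $\epsilon_3^{(i)}\epsilon_3^{(j)}$ with $i\ne j$, is a square in $M$. By $S_3$-equivariance, it suffices to show that $\epsilon_3$ is a nonsquare in $M$ (the product case reduces to this, since $\epsilon_3^{(i)}\epsilon_3^{(j)}$ equals $\epsilon_3^{(l)}$ modulo squares by the norm condition). I would use a ramification argument: find a place $w$ of $M$ at which $\epsilon_3^{(1)}$ has odd valuation and which is unramified over $k(\theta_1)$, or better, unramified in $M/k$. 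Candidate places lie above $t+1$. At $t=-1$ we get $f_3\equiv x^3-x$ modulo $(t+1)$, which is separable, so $t+1$ is unramified in $M$. Moreover, $\theta_i\not\equiv 0$ except at the root near $0$. The valuation of $\epsilon_3$ at places over $t+1$ is governed by the factor $(t+1)$. Where $v(\theta_i)=0$, we get $v(\epsilon_3^{(i)})=v(t+1)=1$, which is odd, since $t+1$ is unramified in $M$. Hence $\epsilon_3^{(i)}$ is not a square in $M$.

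The main obstacle I expect is Step 3, specifically handling the root $\theta$ with $\theta\equiv 0 \bmod (t+1)$. There $v(\theta)>0$, and I must compute its exact valuation, which is $1$ from the constant term $t^4(t+1)\cdots$, giving $v(\epsilon_3)=2$. I then need to ensure the choice of $w$ is consistent for each conjugate and does not accidentally produce squares. Choosing for each $i$ a place over $t+1$ at which $\theta_i\equiv\pm1$ handles this, since such places exist for every $i$ by transitivity of $S_3$. Combining Steps 1–3 gives $|G|=6\cdot 4=24$, so $G\cong S_4$.
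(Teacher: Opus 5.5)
Your proof is correct. It has the same skeleton as the paper's: the Galois closure sits inside $C_2\wr S_3$, the square norm cuts it down, $S_3$ sits on top, and a Klein four group is the kernel. You organise it differently, though, and you prove things the paper only asserts.

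\begin{itemize}
\item \emph{Irreducibility of $f_3$.} The paper goes straight from ``$\Delta_{f_3}$ is a nonsquare'' to $\Gal(K_1/k)\cong S_3$. This tacitly assumes $f_3$ is irreducible. You address that assumption.
\item \emph{The $V_4$ kernel.} The paper asserts that $K_2/K_1$ is a $V_4$-extension without checking that $\epsilon_3$ and $\sigma_1(\epsilon_3)$ are independent nonsquares in $K_1$. Your Step 3 proves the kernel has order $4$. Since $f_3\equiv x^3-x \bmod (t+1)$, the place $t+1$ splits completely in $M$. At a place where $\theta_i\equiv\pm1$ one has $v(\epsilon_3^{(i)})=v(t+1)=1$, so no $\epsilon_3^{(i)}$ is a square in $M$.
\item \emph{Identifying $S_4$.} The paper looks up the only $V_4$-extension of $S_3$ inside the wreath product in a group database. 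You instead use the square norm to place $G$ in the explicit sum-zero subgroup $C_2^2\rtimes S_3\cong S_4$ and finish with an order count. This removes the lookup.
\end{itemize}

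The one loose end is Step 2, where you list options rather than commit to one. The rational root test closes it at once. Any root in $k$ lies in $\F_p[t]$ and divides $t^4(t+1)$. The $t$-adic Newton polygon forces its $t$-valuation to be $3$, leaving $ct^3$ and $ct^3(t+1)$. Substituting $ct^3$ leaves a $t^5$ coefficient of $\alpha+1$, which is nonzero. Substituting $ct^3(t+1)$ forces both $\alpha(\alpha-1)^2c+(\alpha+1)=0$ and $\alpha(\alpha-1)^2c-(3\alpha-1)=0$, hence $4\alpha=0$. Both cases are impossible.

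The paper's route is shorter. Yours is self-contained and verifies the two facts its proof depends on.
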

	
\begin{proof}
Let $K_1=k(\epsilon_3)^{\Gal}$, and let $K_2=k(\sqrt{\epsilon_3})^{\Gal}$. Let $\sigma \in \Gal(K_2/k)$. Then
\[
\sigma\left(\sqrt{\epsilon_3}\right)^2=\sigma(\epsilon_3)=\sigma_1(\epsilon_3),
\]
for some $\sigma_1\in \Gal(K_1/k)$, so the Galois conjugates of $\sqrt{\epsilon_3}$ are therefore
\[
\pm\sqrt{\epsilon_3},\pm \sqrt{\sigma_1(\epsilon_3)},\pm \sqrt{\sigma_1^2(\epsilon_3)},
\] 
for some $\sigma_1 \in \Gal(K_1/k)$ not fixing $\epsilon_3$.  Since $N_{ k(\epsilon_3)/k}(\epsilon_3)\in k^{\times2}$, the Galois closure of $k(\sqrt{\epsilon_3})/k$ is
\[        K_2=K_1\left(\sqrt{\epsilon_3},\sqrt{\sigma_1(\epsilon_3)}\right).
\]
The Galois group of $K_1/k$ is $S_3$, since $\Delta_{f_3}$ is non-square, so $\Gal(K_2/k)$ is a $V_4$-extension of $S_3$. 

Consider the tower of extensions $ K_1(\sqrt{\epsilon_3})/K_1/k$. Then $\Gal(K_1(\sqrt{\epsilon_3})^{\Gal}/k)=\Gal(K_2/k)$ is a subgroup of the wreath product of $\Gal(K_1(\sqrt{\epsilon_3})/K_1)\cong C_2$ with $\Gal(K_1/k)\cong S_3$ by \cite[Lemma 4.1]{Odoni1985GaloisIterates}. The only such group which is a $V_4$-extension of $S_3$ is $S_4$, see for instance \cite{GroupNames}.
\end{proof}

\begin{lemma}\label{lemma:geometric}
Let  $L:=k(\sqrt{\epsilon_3})^{\Gal}k(\sqrt{t+1})\prod_{\nu\mid \xi}k(\sqrt{q_{\nu,1}})$, then the extension $L/k$ is geometric. Furthermore, if $p\notin\{7,11\}$, then $L$ contains the subfield $k(\sqrt{q_{t,-1}})$.
\end{lemma}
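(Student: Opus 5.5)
To show $L/k$ is geometric, I will show that the constant field of $L$ is $\F_p$, i.e.\ that $L$ does not contain $\F_{p^2}k$. Since $\F_{p^2}k=k(\sqrt{\alpha})$ (as $\alpha\notin\F_p^{\times 2}$), it suffices to show $\sqrt{\alpha}\notin L$; an extension of $\F_p$ of odd degree $r>1$ is impossible as well, because the whole compositum is built from $S_4$ and quadratic pieces. Concretely, the maximal constant extension in $L$ lies in the maximal abelian subextension, and $L$ is a compositum of $k(\sqrt{\epsilon_3})^{\Gal}$ with $S_4$ Galois group (Lemma~\ref{galois group is S_4}) and several quadratic extensions. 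The maximal abelian quotient of $S_4$ is $C_2$, corresponding to $k(\sqrt{\Delta_{f_3}})=k(\sqrt{t\xi})$ up to squares, since the sign character of $S_4$ restricts to the sign on the $S_3$ quotient. Hence the maximal abelian subextension of $L$ is contained in the multiquadratic field
\[
M:=k\Bigl(\sqrt{t\xi},\sqrt{t+1},\sqrt{q_{\nu,1}}\;:\;\nu\mid\xi\Bigr).
\]
In particular every constant subextension has degree dividing a power of $2$, and it remains to show $\alpha\notin \langle t\xi,\,t+1,\,q_{\nu,1}\rangle\cdot k^{\times2}$.

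To do this I use valuations and leading coefficients. By construction the $q_{\nu,1}$ are monic irreducibles dividing $\xi$, while $t+1$ and $t$ are monic. Any product $\prod$ of these generators that equals $\alpha$ times a square must have even valuation at every finite prime. I first check that $t$ and $t+1$ do not divide $\xi$: at $t=0$ we get $\xi(0)=-\alpha^4(\alpha-1)^8\neq0$, and at $t=-1$ we get $\xi(-1)=-\alpha^4(\alpha-1)^8\ne0$. So $t$ appears only in $t\xi$, forcing that generator to be absent; then $t+1$ must be absent; then each $q_{\nu,1}$ appears with even exponent. The product is therefore $1$ modulo squares, which is not $\alpha$. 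This shows $L/k$ is geometric. The step I expect to need the most care is justifying that the maximal abelian subextension of the compositum lies in $M$. For that I argue via the Galois group of $L$ as a subgroup of $S_4\times C_2^m$, whose abelianization is a quotient of $C_2\times C_2^m$, with the $C_2$ factor corresponding to the discriminant $\Delta_{f_3}\equiv t\xi$ up to squares.

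For the second assertion, $q_{t,-1}=\alpha t$, so I must show $\alpha t\in L^{\times2}$. The plan is to produce $\alpha t$ as a product of generators of $M$ modulo squares; this requires $k(\sqrt{t\xi})\subseteq L$ together with $\sqrt{\alpha\xi}$. The idea is that since $\xi$ factors into the $q_{\nu,1}$ up to its leading coefficient $\ell=(\alpha+1)(3\alpha-1)^3$, we get $\xi\equiv \ell\prod_{\nu\|\xi\text{ odd}}q_{\nu,1}$ modulo squares. Hence $\alpha t\equiv t\xi\cdot\prod q_{\nu,1}\cdot \alpha\ell$ modulo squares, and $\alpha\ell$ is a square in $\F_p$ precisely when $3\alpha-1\notin\F_p^{\times2}$, using that $\alpha\notin\F_p^{\times 2}$ and $\alpha+1\in\F_p^{\times 2}$. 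This is exactly condition \eqref{3alpha-1 is not a square} of Lemma~\ref{lem: constructing alpha}, which is available when $p\notin\{7,11\}$, and it gives $\sqrt{\alpha t}\in L$.
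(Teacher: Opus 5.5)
Your proposal is correct and follows the paper's route. The maximal abelian subextension of $L/k$ is the multiquadratic field generated by $\sqrt{t\xi}$, $\sqrt{t+1}$ and the $\sqrt{q_{\nu,1}}$; this holds because $\Gal(L/k)$ surjects onto $S_4$ while the other factor is abelian, so its commutator subgroup is $A_4\times 1$. Geometricity therefore reduces to $\sqrt{\alpha}\notin L$, and $\sqrt{\alpha t}\in L$ comes from $\Delta_{f_3}\prod q_{\nu,1}$ together with $\alpha+1\in\F_p^{\times 2}$ and $3\alpha-1\notin\F_p^{\times 2}$.

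Your valuation argument at $t$, $t+1$ and the $q_{\nu,1}$ fills in a step the paper only asserts, namely that every quadratic subextension of $L/k$ is geometric. Restricting to primes dividing $\xi$ to odd order also covers the case where $\xi$ is not squarefree.
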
 

\begin{proof}  
Since $k(\sqrt{\epsilon_3})^{\Gal}/k$ is an $S_4$-extension by Lemma \ref{galois group is S_4}, it has a unique quadratic subextension with discriminant $\Delta_{f_3}=t\xi \in k^{\times}/k^{\times 2} $. Therefore,
\[
    k(\sqrt{\epsilon_3})^{\Gal} \cap k(\sqrt{t+1})\prod_{\nu\mid \xi}(\sqrt{q_{\nu,1}})=k,
\]
and so we have $\Gal(L)\cong S_4\times C_2^{r+1}$, where $r$ is the number of prime factors of $\xi$. Since every constant extension of $k$ is abelian, and $(S_4\times C_2^{r+1})^{ab}\cong C_2^{r+2}$, a non-trivial constant extension would have to contain the degree 2 constant extension of $k$. Every degree 2 subextension of $L/k$ is geometric. Thus $L/k$ is geometric.
    
By considering $\Delta_{f_3}\prod _{v\mid \xi} q_{v,1}^{-1}\in L$ we see $L$ contains the quadratic extension
\[
k\left(\sqrt{(\alpha+1)(3\alpha-1)^3t}\right),
\]
where $(\alpha+1)(3\alpha-1)^3$ is the leading coefficient of $\xi$. If $p \notin \{7,11\}$, then conditions \eqref{alpha+1 is a square} and \eqref{3alpha-1 is not a square} of Lemma \ref{lem:choiceofalpha} ensure that $k\left(\sqrt{(\alpha+1)(3\alpha-1)^3t}\right)=k\left(\sqrt{\alpha t}\right)$.
\end{proof}
	
\begin{lemma}\label{lem:applyingcheb for not 2,3,7,11}
	Assume $p\notin\{7,11\}$, then there exist infinitely many primes $D\in \Omega_k$ such that $D$ satisfies the conditions of Lemma~\ref{prop: under hypothesis BMO to quad point}.  
\end{lemma}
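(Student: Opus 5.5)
We will produce the primes $D$ with Chebotarev's density theorem (Theorem~\ref{theorem:CDTforGFFs}) applied to the geometric extension $L/k$ of Lemma~\ref{lemma:geometric}. The idea is to express each condition of Lemma~\ref{prop: under hypothesis BMO to quad point} as a splitting condition for the prime $\nu'=(D)$ in $L$, and then choose a single Frobenius class in $\Gal(L/k)\cong S_4\times C_2^{r+1}$ realising all of them at once.

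We take $D$ to be an irreducible polynomial with leading coefficient $-1$ and odd degree $N$. Condition \eqref{cond: dx square at infinity} then holds automatically, and we must arrange that $(D)$ has the right Frobenius. The conditions translate as follows.

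\begin{enumerate}
\item Condition \eqref{cond: ep3 is a square for all v/d}: since $\nu\mid D$ forces $\nu=(D)$, this says that $\epsilon_3$ is a square in $k_{(D)}\otimes k(\theta)$. It holds if $(D)$ splits completely in $k(\sqrt{\epsilon_3})^{\Gal}$, i.e. its Frobenius in the $S_4$ factor is trivial.
\item Condition \eqref{cond: d or dy is a square at ky}: at $t$ we need $\alpha D$ to be a square in $k_t$, i.e. $\left(\frac{D}{t}\right)=\left(\frac{\alpha}{p}\right)=-1$. By Lemma~\ref{lem: quad rec trick for changing reps}, this is equivalent to $(D)$ splitting in $k(\sqrt{q_{t,-1}})$. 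Lemma~\ref{lemma:geometric} says this field lies inside $L$ when $p\notin\{7,11\}$.
\item Condition \eqref{cond: get rid of primes dividing delta}: for $\nu\mid\xi$ it suffices that $D$ is a square mod $\nu$, and then Hensel's lemma makes it a square in $k_\nu$. By Lemma~\ref{lem: quad rec trick for changing reps} with $\delta=1$, this is equivalent to $(D)$ splitting in $k(\sqrt{q_{\nu,1}})$. For $\nu=(t+1)$ we need $D$ to be a square mod $t+1$. Reciprocity (Theorem~\ref{theorem:reciprocity}) with the monic $t+1$ and $D$ of odd degree and leading coefficient $-1$ gives $\left(\frac{D}{t+1}\right)=\left(\frac{t+1}{D}\right)$, which is again a splitting condition in $k(\sqrt{t+1})\subseteq L$.
\end{enumerate}

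Hence it suffices that $(D)$ splits completely in $L$, i.e. has trivial Frobenius class. The subfield $k(\sqrt{q_{t,-1}})$ is contained in $L$, so no extra compatibility is needed; the compatibility issue is exactly what Lemma~\ref{lemma:geometric} resolves.

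Theorem~\ref{theorem:CDTforGFFs} applied to the identity class then gives, for every sufficiently large $N$, a prime $P$ of degree $N$ splitting completely in $L$. Taking $N$ odd and large gives infinitely many such primes. Each $P$ has a unique generator $D$ with leading coefficient $-1$, and for all but finitely many $P$ we have $P\notin\{t,t+1\}$ and $P\nmid\xi$.

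We must check that the quadratic reciprocity step in Lemma~\ref{lem: quad rec trick for changing reps} really uses this normalised generator, and that finitely many exceptional primes can be discarded. The main subtlety I expect is making sure that "splits completely in $k(\sqrt{\epsilon_3})^{\Gal}$" genuinely gives $\epsilon_3$ a square in every factor of $k_{(D)}\otimes k(\theta)$. This holds because complete splitting makes $k(\theta)\otimes k_{(D)}\cong k_{(D)}^3$, and each image of $\epsilon_3$ is then a square since $\sqrt{\epsilon_3}$ embeds in the completion.
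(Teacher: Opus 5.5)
Your proposal is correct and follows the paper's proof: apply Theorem~\ref{theorem:CDTforGFFs} with the trivial class to the geometric extension $L/k$ of Lemma~\ref{lemma:geometric}, take primes of large odd degree with generator of leading coefficient $-1$, and read off the conditions of Lemma~\ref{prop: under hypothesis BMO to quad point} from complete splitting in $k(\sqrt{\epsilon_3})^{\Gal}$, $k(\sqrt{q_{t,-1}})$, $k(\sqrt{q_{\nu,1}})$ and $k(\sqrt{t+1})$ via Lemma~\ref{lem: quad rec trick for changing reps}. Your extra details fill in steps the paper leaves implicit: the explicit reciprocity computation at $t+1$, the Hensel step, and the argument that complete splitting makes $\epsilon_3$ a square in every factor of $k_{(D)}\otimes k(\theta)$.
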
    
\begin{proof}
By Lemma~\ref{lemma:geometric} the field extension $L/k$ is geometric, and hence by applying Theorem~\ref{theorem:CDTforGFFs} to $L/k$ and $C=1$, we obtain infinitely many irreducible polynomials $D\in\F_p[t]$ of odd degree with leading coefficient equal to $-1$ such that the prime ideal $(D)$ splits completely in $L$. Without loss of generality, we may assume $(D)\notin \{\nu\in\Omega_k:\nu\mid \xi t(t+1)\infty\}$. Since $L$ contains the subfield $k(\sqrt{q_{t,-1}})$, by Lemma~\ref{lem: quad rec trick for changing reps} this implies $D\notin k_t^{\times 2}$. In particular condition~\eqref{cond: d or dy is a square at ky} of Lemma~\ref{prop: under hypothesis BMO to quad point} holds. Similarly by definition of $L$ we have the following:
\begin{enumerate}[label=(\roman*)]
	\item \label{cond: ep3 sqaure}$\epsilon_3\in\left(k_D\otimes k(\theta)\right)^{\times 2}$;
	\item \label{cond: primes dividing xi and t+1 have D a square in knu}For all $\nu\mid (t+1)\xi$, we have that $D\in k_\nu^{\times 2}$.
\end{enumerate}
In particular conditions \eqref{cond: ep3 is a square for all v/d} and \eqref{cond: get rid of primes dividing delta} of Lemma~\ref{prop: under hypothesis BMO to quad point} follow immediately from conditions \ref{cond: ep3 sqaure} and \ref{cond: primes dividing xi and t+1 have D a square in knu}, respectively. Since $\operatorname{deg}D$ is odd and $\operatorname{lead}D=-1$, condition~\eqref{cond: dx square at infinity} also follows.
\end{proof}
	
\subsection{Existence of $X_d$ for $p\in\{7,11\}$}
In this section we will prove Theorem~\ref{thm:maindp4} in the case $p\in\{7,11\}$. The method will be similar to Section~\ref{sec: existence of Xd for p not 3,7,11}.
	
\begin{lemma}\label{lem:711}
Assume $p\in\{7,11\}$, then for all $\nu\mid \xi$ we have that $\epsilon_3\in \left(k_\nu\otimes k(\theta)\right)^{\times 2}$.   
\end{lemma}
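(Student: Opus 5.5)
Since $p\in\{7,11\}$ and $\alpha$ is fixed ($\alpha=3$ for $p=7$, $\alpha=8$ for $p=11$), the polynomial $\xi\in\F_p[t]$ is explicit. The plan is a finite, direct computation: factor $\xi$ into irreducibles over $\F_p$, and for each irreducible factor $\nu\mid\xi$ determine the factorisation of $f_3$ over $k_\nu$ together with the square class of $\epsilon_3=-\alpha(t+1)\theta$ in each factor field.

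\emph{Step 1: the residue field and the shape of $f_3$ at $\nu$.} Because $\nu\mid\xi$ and $\Delta_{f_3}=2^2t^3\xi/(\alpha^4(\alpha-1)^8)$, the reduction $\bar f_3$ modulo $\nu$ has a repeated root. At a prime where $\nu$ divides $\xi$ to first order, I expect $\bar f_3=(x-r)^2(x-s)$ with $r\neq s$ in the residue field $\F_\nu$. Hensel's lemma then gives
\[
k_\nu\otimes k(\theta)\cong k_\nu\times E,
\]
where the factor $k_\nu$ corresponds to the simple root $s$. Here $E$ is a quadratic étale algebra over $k_\nu$ lifting $(x-r)^2$, and it is ramified because $v_\nu(\disc)$ is odd.

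\emph{Step 2: squareness on each factor.}
\begin{enumerate}
\item On the unramified factor $k_\nu$, the element $\epsilon_3$ is a unit. It is a square if and only if $-\alpha(t+1)s$ is a square in $\F_\nu$, which one checks directly from the explicit $\bar f_3$.
\item On the ramified factor $E$, with uniformiser $\pi$ and $\theta\equiv r\pmod{\pi}$, a unit is a square if and only if its residue is a square in $\F_\nu$. So it suffices that $-\alpha(t+1)r$ is a square in $\F_\nu$.
\end{enumerate}
Since $N(\epsilon_3)$ is a square in $k$, the product of the three residues $(-\alpha(t+1))^3 r^2 s$ is a square. Hence it is enough to check one of the two conditions, say that $-\alpha(t+1)s\in\F_\nu^{\times2}$; the other then follows automatically. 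If $\nu$ divides $\xi$ to higher order, or $\bar f_3$ acquires a triple root, I would handle that prime separately by hand.

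\emph{Step 3: the explicit computation, which is the main obstacle.} This step is pure calculation: factoring $\xi$ (degree $14$ in $t$) over $\F_7$ and $\F_{11}$, and computing $r$, $s$ in each residue field. The conceptual risk is that the lemma is really a coincidence special to $p\in\{7,11\}$, tied to $(\alpha+1)(3\alpha-1)^3$ being a square there. I would first look for a structural reason, for instance a polynomial identity expressing $s$ and $-\alpha(t+1)$ in terms of $t$ modulo $\xi$. Failing that, I would fall back on a direct machine-checkable computation for each factor.
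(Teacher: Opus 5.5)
Your framework is the same as the paper's proof. You factor $\xi$ and observe that at each $\nu\mid\xi$ one has $k_\nu\otimes k(\theta)\cong k_\nu\times E$ with $E/k_\nu$ a ramified quadratic extension; the paper records this as $v_{\pi_1}(\Delta_{f_{3,2}})=1$. You then test the residues of $\epsilon_3$ on each factor and conclude by Hensel's lemma.

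The gap is your shortcut at the end of Step~2, which runs in exactly the wrong direction. Your own expression factors as
\[
(-\alpha(t+1))^3r^2s=\bigl(-\alpha(t+1)s\bigr)\cdot\bigl(-\alpha(t+1)r\bigr)^2 .
\]
Now $N(\epsilon_3)=\alpha^2(\alpha+1)t^4(t+1)^4/(\alpha-1)^2$ is a square unit at $\nu$, using $\alpha+1\in\F_p^{\times 2}$ and $\nu\nmid t(t+1)$. That fact is \emph{equivalent} to $-\alpha(t+1)s\in\F_\nu^{\times 2}$, and it says nothing about $r$, which appears only squared. Put differently, for a unit $u$ of the totally ramified extension $E$ one always has $\overline{N_{E/k_\nu}(u)}=\bar u^{2}$, so the norm condition cannot detect the square class of the component $\epsilon_{3,2}$.

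So the condition you propose to check is automatically true. The condition you claim ``follows automatically'', namely $-\alpha(t+1)r\in\F_\nu^{\times 2}$ on the ramified factor, is independent of it. It is the nontrivial content of the lemma, and it is where the specific $p\in\{7,11\}$ and $\alpha$ enter. As written, your plan would verify a tautology and would not prove the statement.

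The repair is simple: check the ramified component instead. In each residue field $\F_p[t]/(\omega_i)$, verify by explicit computation that $-\alpha(t+1)r$ is a square, where $r$ is the double root of $\bar f_3$. The unramified component then does come for free from the norm. With that change your argument is the paper's computation, slightly economised. The paper checks both residues, for example $\epsilon_{3,1}\equiv 4a^3+4a^2+6$ and $\epsilon_{3,2}\equiv 4a^3+a^2+6$ at $\omega_1$ for $p=7$. In the computation you should also confirm that each $\omega_i$ divides $\xi$ exactly once and that $\bar f_3$ has no triple root, as the paper's factorisations implicitly do.
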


\begin{proof}
The following calculations were verified in Magma.
Consider first the case $p=7$ and $\alpha=3$. Then $\xi$ factors as $\xi=\omega_1\omega_2$ in $\F_7(t)$, where 
\begin{align*}
\omega_1:&=t^4+5t^2+2t+4,\\
\omega_2:&=t^{10}+4t^9+t^8+3t^7+5t^6+2t^5+5t^3+t^2+4t+6.
\end{align*}

Let $k_{\omega_1}$ and $k_{\omega_2}$ denote the completions of $k$ at $\omega_1$ and $\omega_2$ respectively. We want to show $\epsilon_3\in \left(k_{\omega_1}\otimes k(\theta)\right)^{\times 2}$ and $\epsilon_3\in \left(k_{\omega_2} \otimes k(\theta)\right)^{\times 2}$.

By factoring $f_3$ over $k_{\omega_1}$, we have that $f_3=f_{3,1}f_{3,2}$, where $f_{3,1},f_{3,2}\in k_{\omega_1}[x]$ are monic irreducible polynomials of degree 1 and 2, respectively. Let $(\epsilon_{3,1},\epsilon_{3,2})$ be the image of $\epsilon_3$ under the isomorphism
\begin{align*}
	k_{\omega_1}\otimes_k k[x]/f_3(x)&\cong k_{\omega_1}[x]/f_3(x)\\ 
	&\cong k_{\omega_1}[x]/f_{3,1}(x)\oplus k_{\omega_1}[x]/f_{3,2}(x)\\
	&\cong k_{\omega_1}\oplus k_{\omega_1}(\theta_2).
\end{align*}

Let $\pi_1$ be a uniformiser of $k_{\omega_1}$, and let $a$ be a lift to $k_{\omega_1}$ of a generator of its residue field. Then
\[
\epsilon_{3,1}\equiv 4a^3 + 4a^2 + 6
\in \left(\OO_{k_{\omega_1}}/\pi_1\right)^{\times2}
\]
is a square. 
        
Since $v_{\pi_1}(\theta_2)=0$, $\theta_2$ is not a uniformiser of $k_{\omega_1}(\theta_2)$. Let $f_{3,2}=x^2+bx+c$. We note that $v_{\pi_1}\left(\Delta_{f_{3,2}}\right)=1$, so $v_{\pi_1}\left(\theta_2+\frac{b}{2}\right)=\frac{1}{2}$. Therefore, the element 
\[
\pi_2:=\theta_2+\frac{b}{2},
\]
is a uniformiser of $k_{\omega_1}(\theta_2)$. Then
\[   
\epsilon_{3,2}\equiv 4a^3+a^2+6\in \left(\oo_{k,\omega_1}/\pi_2\right)^{\times2}
\]
and again applying Hensel's Lemma we conclude $\epsilon_3$ is a square in $k_{\omega_1}(\theta_2)$. 

We repeat a similar argument with $k_{\omega_2}$. Again, $f_3=f_{3,1}f_{3,2}$, where $f_{3,1},f_{3,2}\in k_{\omega_1}[x]$ are monic irreducible polynomials of degree 1 and 2, respectively. Then in the corresponding \'etale algebras, both
\[
\epsilon_{3,1}\equiv 4a^9 + a^8 + 4a^7+2a^2\in \left(\oo_{k,\omega_2}/\pi_1\right)^{\times2}
\]
and
\[   
\epsilon_{3,2}\equiv 4a^9+a^8+4a^7+6a^2\in \left(\oo_{k,\omega_2}/\pi_2\right)^{\times2}
\]
are square.

Now we consider the case $p=11$ and $\alpha=8$. Then $\xi$ factors as $\xi=\omega_1 \omega_2 \omega_3$, where
\begin{align*}
\omega_1:&=t^2+2t+10,\\
\omega_2:&=t^6+4t^5+6t^4+9t^3+2t^2+6t+7,\\
\omega_3:&=t^6+9t^5+5t^4+5t^3+3t^2+2t+10.
\end{align*}

Over $k_{\omega_1}$, $f_3$ factors into a linear and a quadratic factor, $f_3=f_{3,1}f_{3,2}$. As before, in the corresponding \'etale algebras, both
\[
\epsilon_{3,1}\equiv 9a+8 \in \left(\oo_{k,\omega_1}/\pi_1\right)^{\times2}
\]
and
\[
\epsilon_{3,2}\equiv 7a+1\in \left(\oo_{k,\omega_1}/\pi_2\right)^{\times2}
\]
are square.

Over $k_{\omega_2}$, $f_3$ factors into a linear and a quadratic factor, $f_3=f_{3,1}f_{3,2}$. In the corresponding \'etale algebras, both
\[
\epsilon_{3,1}\equiv a^5+3a^4+3a^3+8a^2+8 \in \left(\oo_{k,\omega_2}/\pi_1\right)^{\times2}
\]
and
\[
\epsilon_{3,2}\equiv 3a^5+9a^4+9a^3+a^2+2 \in \left(\oo_{k,\omega_2}/\pi_2\right)^{\times2}
\]
are square. 

Over $k_{\omega_3}$, $f_3$ factors into a linear and a quadratic factor, $f_3=f_{3,1}f_{3,2}$. In the corresponding \'etale algebras, both
\[
\epsilon_{3,1}\equiv 7a^4+8a^2+5a+5 \in \left(\oo_{k,\omega_3}/\pi_1\right)^{\times2}
\]
and
\[
\epsilon_{3,2}\equiv 10a^4+a^2+4a+4 \in \left(\oo_{k,\omega_3}/\pi_2\right)^{\times2}
\]
are square. 
\end{proof}
	
\begin{lemma}\label{lem: p =3,11 cheb}
Assume $p\in\{7,11\}$, then there exist infinitely many primes $D\in \Omega_k$ such that $D$ satisfies the conditions of Lemma~\ref{prop: under hypothesis BMO to quad point}. \end{lemma}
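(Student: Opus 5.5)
We follow the proof of Lemma~\ref{lem:applyingcheb for not 2,3,7,11}, adjusting the field to which Chebotarev is applied. For $p\in\{7,11\}$, Lemma~\ref{lemma:geometric} no longer guarantees that $L$ contains $k(\sqrt{q_{t,-1}})$, because condition \eqref{3alpha-1 is not a square} of Lemma~\ref{lem:choiceofalpha} fails. However, Lemma~\ref{lem:711} tells us that at every $\nu\mid\xi$ the element $\epsilon_3$ is already a square in $k_\nu\otimes k(\theta)$, so condition \eqref{cond: get rid of primes dividing delta} of Lemma~\ref{prop: under hypothesis BMO to quad point} holds automatically at these places. We therefore no longer need the fields $k(\sqrt{q_{\nu,1}})$ for $\nu\mid\xi$, and we set
\[
L':=k(\sqrt{\epsilon_3})^{\Gal}\,k(\sqrt{t+1})\,k(\sqrt{q_{t,-1}}).
\]

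First we check that $L'/k$ is geometric and has Galois group $S_4\times C_2^2$. By Lemma~\ref{galois group is S_4}, the $S_4$-extension has a unique quadratic subfield, namely $k(\sqrt{\Delta_{f_3}})=k(\sqrt{t\xi})$. So we need $t\xi$ to be independent of $t+1$ and $\alpha t$ modulo squares; that is, none of $t\xi$, $t\xi(t+1)$, $\xi/\alpha$, $\xi(t+1)/\alpha$ is a square in $k$. Since $\xi$ is squarefree of positive degree (by the explicit factorisations in Lemma~\ref{lem:711}) and coprime to $t(t+1)$, each of these elements has odd valuation at some factor $\omega_i$ of $\xi$, so none is a square. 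Also $t+1$ and $\alpha t$ are independent modulo squares. As in Lemma~\ref{lemma:geometric}, $(S_4\times C_2^2)^{ab}\cong C_2^3$, and every quadratic subextension is generated by a square root of some product of $t\xi$, $t+1$, $\alpha t$. None of these products is a constant times a square, so the quadratic constant extension is not contained in $L'$, and $L'/k$ is geometric.

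Next we apply Theorem~\ref{theorem:CDTforGFFs} to $L'/k$ with $C=1$, taking odd degrees $N$. This yields infinitely many primes of odd degree that split completely in $L'$; we choose the generator $D$ with leading coefficient $-1$ and discard the finitely many $D$ dividing $\xi t(t+1)$. We then verify the conditions of Lemma~\ref{prop: under hypothesis BMO to quad point} one by one.
\begin{enumerate}[label=(\roman*)]
\item Condition \eqref{cond: ep3 is a square for all v/d} holds because $D$ splits completely in $k(\sqrt{\epsilon_3})^{\Gal}$.
\item Condition \eqref{cond: d or dy is a square at ky} holds by Lemma~\ref{lem: quad rec trick for changing reps}: $D$ splitting in $k(\sqrt{q_{t,-1}})$ means $D$ is a nonsquare modulo $t$, hence $\alpha D\in k_t^{\times2}$.
\item Condition \eqref{cond: dx square at infinity} holds by construction of $D$.
\item For condition \eqref{cond: get rid of primes dividing delta}, at $\nu\mid\xi$ we use Lemma~\ref{lem:711}. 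At $\nu=t+1$ we need $D\in k_{t+1}^{\times2}$. By Theorem~\ref{theorem:reciprocity}, since $t+1$ is monic of degree 1 and $D$ has leading coefficient $-1$ and odd degree, we get $\left(\frac{D}{t+1}\right)=\left(\frac{t+1}{D}\right)$, exactly as in Lemma~\ref{lem: quad rec trick for changing reps}. The right-hand side equals $1$ because $D$ splits in $k(\sqrt{t+1})$.
\end{enumerate}

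We expect the main obstacle to be the linear-disjointness step, specifically showing that $\Delta_{f_3}$ is independent of $t+1$ and $\alpha t$ modulo squares and of the constants. This relies on $\xi$ having a simple irreducible factor, which we read off from the explicit factorisations computed in Lemma~\ref{lem:711}.
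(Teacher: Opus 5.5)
Your proposal is correct and is essentially the paper's proof. Your field $L'$ is the paper's $k(\sqrt{\epsilon_3})^{\Gal}(\sqrt{q_{t,-1}},\sqrt{q_{t+1,1}})$, since $q_{t+1,1}=t+1$. As in the paper, Lemma~\ref{lem:711} handles the places $\nu\mid\xi$ before you apply Chebotarev with $C=1$. Your explicit check that no product of $t\xi$, $t+1$, $\alpha t$ is a constant times a square fills in the paper's one-line assertion that the maximal abelian subextension $k(\sqrt{q_{t,-1}},\sqrt{q_{t+1,1}},\sqrt{t\xi})/k$ is geometric.
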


\begin{proof}
Consider the field extension $L':=k(\sqrt{\epsilon_3})^{\Gal}(\sqrt{q_{t,-1}},\sqrt{q_{t+1,1}})$. By Lemma~\ref{lemma:geometric}, the extension $k(\sqrt{\epsilon_3})^{\Gal}/k$ is geometric. By Lemma~\ref{galois group is S_4}, the Galois group of $k(\sqrt{\epsilon_3})^{\Gal}/k$ is $S_4$, and so the extension has a unique quadratic subextension with discriminant $\Delta_{f_3}=t\xi$. The maximal abelian subextension of $L'/k$ is $k(\sqrt{q_{t,-1}},\sqrt{q_{t+1,1}},\sqrt{t\xi})/k$ which is geometric, and hence $L'/k$ must be geometric. Thus by applying Theorem~\ref{theorem:CDTforGFFs} to the extension $L'/k$ and $C=1$, we obtain infinitely many irreducible polynomials $D\in\F_p[t]$ of odd degree with leading coefficient equal to $-1$ and such that the following hold:
\begin{enumerate}[label=(\roman*)]
	\item \label{cond i for p is 7,11} $\epsilon_3\in\left(k_D\otimes k(\theta)\right)^{\times 2}$;
    \item  \label{cond ii for p is 7,11} $D\notin k_t^{\times 2}$.
    \item \label{cond: 2 for p is 7,11} $D\in k_{t+1}^{\times 2}$;
\end{enumerate}

Since $D$ has leading coefficient -1 and has odd degree, it satisfies condition \eqref{cond: dx square at infinity}. 
Conditions \eqref{cond: ep3 is a square for all v/d}, \eqref{cond: d or dy is a square at ky} are satisfied by \ref{cond i for p is 7,11}, \ref{cond ii for p is 7,11}, respectively. Condition ~\eqref{cond: get rid of primes dividing delta} is satisfied by \ref{cond: 2 for p is 7,11} and Lemma ~\ref{lem:711}. 
\end{proof}

\section{Existence for \texorpdfstring{$p=3$}{}}\label{sec:p=3}
	
Let $k=\F_3(t)$, and let $X_d:=\{Q_0=Q_\infty=0\}\subset \PP^4_k$ be given by 
\begin{align}
	Q_0:&=d(u_0^2-tu_1^2)+(t+t^2)u_2^2+tu_2u_3-tu_3u_4,\\
	Q_\infty:&=du_0u_1-u_2^2+ tu_3^2-u_2u_4+u_4^2.
\end{align}

The characteristic polynomial of $X_d$ is
\[
    f(x):=2td^2(x^2+t)f_3(x),
\]
where 
\[
    f_3(x):=x^3+(t^2+t)x^2+tx+2t^3+2t^2.
\]
Let $\theta_3\in k[x]/f_3(x)$ denote the image of $x$ under the canonical map $k[x]\rightarrow k[x]/f_3(x)$, so that
\[
	k[x]/f(x)\cong k(\sqrt{-t})\oplus k(\theta_3).
\]
The $\epsilon_i$ are
\[
\epsilon_2= dt(t-1)\in k(\sqrt{-t})^{\times}/k(\sqrt{-t})^{\times2},
\]
and
\[
    \epsilon_3= (t^7+2t^6+2t^3+t^2+1)\theta_3\in k(\theta_3)^{\times}/k(\theta_3)^{\times2}.
\]
Both $\epsilon_2$ and $\epsilon_3$ are not squares and both $\epsilon_2$ and $\epsilon_3$ have square norms.
The discriminant of $f$ is
\[
	\Delta=2d^{16}t^{20}(t+1)^4(t^2+2t+2)(t^4+2t^3+t+1),
\]
and the discriminant of $f_3$ is
\[
    \Delta_{f_3}=t^3(t^2+2t+2)(t^4+2t^3+t+1).
\]
	
By \cite[Theorems 1.2 and 1.8]{reid1972complete}, the primes of bad reduction of $\mathcal{G}_d$ are the primes dividing $\Delta$, so let
\[
	S:=\{ \nu \mid d\} \cup \{t,t+1,t^2+2t+2,t+4+2t^3+t+1,\infty\}
\]
be the set of primes of bad reduction of $\mathcal{G}_d$. For all $\nu \in S$ and $P\in X(k_{\nu})$ we compute $\inv_{\nu} \mathcal{A}(P_{\nu})$.
	
\begin{lemma}\label{lem: local solubility of Q_infinity for 3}
Let $\nu\in\Omega_{k}$. The quadric $\mathbb{V}(Q_{\infty})$ contains a $k_\nu$-rational line if and only if $\nu\notin\{ t,\infty\}$.
\end{lemma}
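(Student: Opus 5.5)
The plan is to run the same argument as in Lemma~\ref{lem: local solubility of Q_infinity}: split $Q_\infty$ as an orthogonal sum of a hyperbolic plane and a rank $3$ form, then apply Lemma~\ref{lemma2}. This reduces the question to local solubility of a conic, i.e.\ to the local behaviour of a quaternion algebra over $k=\F_3(t)$.

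\emph{Splitting off the hyperbolic plane.} The term $d\,u_0u_1$ involves only $u_0,u_1$, and these variables appear nowhere else in $Q_\infty$. Since $d\in k^\times$, it is a hyperbolic plane over every completion $k_\nu$. The complementary rank $3$ form is
\[
-u_2^2-u_2u_4+u_4^2+t\,u_3^2 .
\]

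\emph{Diagonalising the rank $3$ form.} Complete the square in characteristic $3$, using $1/2=-1$ and $1/4=1$ in $\F_3$:
\[
-u_2^2-u_2u_4+u_4^2=-(u_2-u_4)^2+u_4^2+u_4^2=-(u_2-u_4)^2-u_4^2 .
\]
So the conic is equivalent to $X^2+Y^2-tZ^2=0$ after multiplying by $-1$, and its associated quaternion algebra is $(-1,t)\in\Br(k)$. This sign and coefficient bookkeeping is the only place where an error could creep in, so I would double-check it directly against the displayed $Q_\infty$.

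\emph{Computing the ramification of $(-1,t)$.}
\begin{itemize}
\item At a finite place $\nu\neq t$, both $-1$ and $t$ are $\nu$-adic units. Since $p$ is odd, the symbol is trivial.
\item At $\nu=t$, the tame symbol is the class of $-1$ in $\F_3^\times/\F_3^{\times 2}$. This is nontrivial because $-1$ is not a square mod $3$.
\item At $\nu=\infty$, the element $t$ has odd valuation and $-1$ is a unit whose residue is a nonsquare. Hence the symbol is again nontrivial; this also follows from the reciprocity law, since the number of ramified places must be even.
\end{itemize}
Therefore the conic has a $k_\nu$-point exactly when $\nu\notin\{t,\infty\}$. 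By Lemma~\ref{lemma2}, this is exactly when $\mathbb{V}(Q_\infty)$ contains a $k_\nu$-rational line, which proves the statement.
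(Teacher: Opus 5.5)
Your proposal is correct and follows essentially the same route as the paper. The paper also splits off the hyperbolic plane $d\,u_0u_1$ and diagonalises the rank $3$ part as $2(u_2+2u_4)^2+tu_3^2+2u_4^2$, which is your $-(u_2-u_4)^2+tu_3^2-u_4^2$ since $2=-1$ in $\F_3$; it then notes that the quaternion algebra $(2,t)=(-1,t)$ ramifies exactly at $t$ and $\infty$ and concludes by Lemma~\ref{lemma2}, and your sign bookkeeping and ramification computation check out.
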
       
	
\begin{proof}
Observe $Q_\infty$ is the sum of a hyperbolic plane and the rank $3$ quadric
\begin{equation*}
-u_2^2+ tu_3^2-u_2u_4+u_4^2.
\end{equation*}
Diagonalising over $k_\nu$, we may write this quadric (up to the leading constant) as
\begin{equation*}
2(u_2+2u_4)^2+tu_3^2+2u_4^2.
\end{equation*}

Hence the quadric corresponds to the quaternion algebra $(2,t)\in \Br k_\nu$, which is ramified at exactly $\nu=t,\infty$. The result now follows from Lemma \ref{lemma2}.
\end{proof}
	
\begin{lemma} \label{lemma4 for 3} For all $b \in \oo_{t}$, the quadric $\mathbb{V}(bQ_0 + Q_{\infty}) \subset \PP^4_{k_t}$ contains no $k_t$-rational lines. 
\end{lemma}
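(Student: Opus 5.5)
The plan is to follow the proof of Lemma~\ref{lemma4}: split $bQ_0+Q_\infty$ into a hyperbolic plane plus a rank $3$ form, and then apply Lemma~\ref{lemma2}.

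\textbf{Rank.} First I check that $bQ_0+Q_\infty$ has rank $5$ for every $b\in\oo_t$, so that Lemma~\ref{lemma2} applies. All non-leading coefficients of $f_3(x)=x^3+(t^2+t)x^2+tx+2t^3+2t^2$ are divisible by $t$, and $x^2+t$ has the same property. Hence every root of $f$ in $\overline{k_t}$ has positive $t$-valuation, so $1/b$ (which has non-positive valuation, or is $\infty$) is never a root.

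\textbf{Splitting off the hyperbolic plane.} The variables $u_0,u_1$ do not interact with $u_2,u_3,u_4$, so $bQ_0+Q_\infty$ is the orthogonal sum of
\[
d\left(b\,u_0^2+u_0u_1-tb\,u_1^2\right)
\]
and a ternary form $R_b$ in $u_2,u_3,u_4$. The binary form has discriminant
\[
d^2\left(-tb^2-\tfrac14\right)=-\tfrac{d^2}{4}\left(1+4tb^2\right).
\]
Since $b\in\oo_t$, the factor $1+4tb^2$ is $\equiv 1 \bmod t$ and hence a square by Hensel's lemma, so the discriminant is $-1$ modulo squares. The binary form is therefore a hyperbolic plane over $k_t$. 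By Lemma~\ref{lemma2}, it then suffices to show that the conic $R_b=0$ has no $k_t$-point.

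\textbf{Anisotropy of $R_b$.} We have
\[
R_b=\bigl(b(t+t^2)-1\bigr)u_2^2+tu_3^2+u_4^2-u_2u_4+bt\,u_2u_3-bt\,u_3u_4 .
\]
\begin{itemize}
\item Its restriction to $(u_2,u_4)$ is a binary form $B$ with $\oo_t$-coefficients. Modulo $t$ it reduces to $-u_2^2-u_2u_4+u_4^2$ over $\F_3$, whose discriminant is $1+4=2$, a non-square in $\F_3$. So $B$ is unimodular and anisotropic modulo $t$.
\item The mixed terms involving $u_3$ have coefficients divisible by $t$. Completing squares (changing $u_2,u_4$ by $t$-multiples of $u_3$), I rewrite $R_b\simeq B'\perp c\,u_3^2$. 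Here $B'$ still reduces to the same anisotropic form modulo $t$, and $c=t-O(t^2)=t\cdot(\text{unit})$, which a determinant computation confirms.
\item For a primitive solution over $\oo_t$: if $t\nmid(u_2,u_4)$, then reducing modulo $t$ contradicts the anisotropy of $\bar B'$. Otherwise $t^2\mid B'(u_2,u_4)$, which forces $t\mid u_3$ by comparing valuations, contradicting primitivity.
\end{itemize}
Equivalently, the associated Hilbert symbol is $(2,t)_t\neq 0$, just as in Lemma~\ref{lem: local solubility of Q_infinity for 3} for $b=0$.

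The main obstacle I expect is bookkeeping in characteristic $3$, where $\tfrac12=2$: verifying that after completing the square the $u_3^2$-coefficient has valuation exactly $1$, uniformly in $b\in\oo_t$. The valuation argument only needs this modulo $t^2$, so I will compute the determinant of $R_b$ modulo $t^2$ and divide by the determinant of $B'$, which is a unit.
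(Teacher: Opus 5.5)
Your proof is correct and follows the same route as the paper. You check that the rank is $5$ via the positive $t$-valuations of the roots of $f$, split off a hyperbolic plane whose discriminant is $-1$ modulo squares, and reduce via Lemma~\ref{lemma2} to showing that the ternary form has no nontrivial zero over $k_t$.

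The only difference is in that last step. The paper writes down an explicit diagonalisation $\beta_2u_2^2+\beta_3u_3^2+\beta_4u_4^2$ and reads off the Hilbert symbol $(2,t)_t=-1$. You instead use the splitting $R_b\simeq B'\perp c\,u_3^2$, with $\bar B'$ anisotropic over $\F_3$ and $v_t(c)=1$, plus a primitive-solution argument. This avoids the explicit formulas and makes the Hilbert symbol claim transparent.
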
       

\begin{proof}
First note that all roots over $\Bar{k_t}$ of $f(x)$ have positive $t$-valuation and thus $1/b$ is not a root of $f(x)$, so $bQ_0+Q_{\infty}$ has rank $5$. 
		
The quadratic form $bQ_0+Q_{\infty}$ is the orthogonal sum of the rank 2 form
\[
d\left(b\,u_0^2 - 2\,u_0u_1 - \,tb\,u_1^2\right),
\]
and a rank 3 form which diagonalises as
		
\begin{equation}\label{eqn:rank3conic}
\beta_2u_2^2+\beta_3u_3^2+\beta_4u_4^2,
\end{equation}
where
\begin{align*}
	\beta_2 &= b(t^2+t)+2,\\
	\beta_3 &= \frac{b\big(b(2t^2)+t^3+t^2\big)+2t}{b(t^2+t)+2},\\
	\beta_4 &= \frac{b\big(b^2(2t^3+2t^2)+b t+(t^2+t)\big)+1}{b\big(b(2t)+t^2+t\big)+2}.
\end{align*}
		
Since $b\in \oo_t$, the discriminant of the rank 2 form is
\[
-d^2(tb^2+1)\equiv -1 \mod (k_t^{\times})^2
\]
so the rank 2 form is isotropic over $k_t$. We then apply Lemma \ref{lemma2} to $bQ_0+Q_{\infty}$ and deduce that $\mathbb{V}(bQ_0+Q_{\infty})$ contains a line over $k_t$ if and only if ~\eqref{eqn:rank3conic} has a $k_t$-point. Since $b\in \oo_t$, the Hilbert symbol associated to this conic is $(2,t)_t=-1$.
\end{proof}
	
Let $\xi$ denote the squarefree part of $\Delta$, so
\[
	\xi:=2\omega_1 \omega_2,
\]
where 
\[
\omega_1=t^2+2t+2,\qquad \omega_2=t^4+2t^3+t+1.
\]
	
Let $k_{\omega_1}$ and $k_{\omega_2}$ denote the completions of $k$ at $\omega_1$ and $\omega_2$, respectively. 
	
\begin{lemma}\label{proposition:IIIepsilon3squareink_p}
Let $\omega_1,\omega_2$ and $\epsilon_3$ be as above, then $\epsilon_3$ is a square in $k_{\omega_1}\otimes_k k(\theta)$ and $k_{\omega_2}\otimes_k k(\theta)$. 
\end{lemma}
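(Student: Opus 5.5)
The argument follows the same template as Lemma~\ref{lem:711}: an explicit local computation at each of the two primes $\omega_1,\omega_2$, finished by Hensel's lemma. We treat $\omega_1=t^2+2t+2$ and $\omega_2=t^4+2t^3+t+1$ separately. The residue fields are $\F_{9}$ and $\F_{81}$. For each $\omega_i$ we first factor $f_3=x^3+(t^2+t)x^2+tx+2t^3+2t^2$ over $k_{\omega_i}$. Since $\omega_i$ divides $\Delta_{f_3}$ exactly once, the reduction of $f_3$ modulo $\omega_i$ has exactly one repeated root. We therefore expect
\[
f_3=f_{3,1}f_{3,2}\quad\text{in } k_{\omega_i}[x],
\]
with $f_{3,1}$ linear and $f_{3,2}$ an irreducible quadratic whose discriminant has odd valuation, hence ramified. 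To obtain this we factor $f_3$ over $\F_{3^{\deg\omega_i}}$, check that it is a simple root times a double root, and lift the simple root by Hensel's lemma. The same odd-valuation argument also shows $f_{3,2}$ is irreducible. We then get
\[
k_{\omega_i}\otimes_k k(\theta)\cong k_{\omega_i}\oplus k_{\omega_i}(\theta_2),
\]
and $\epsilon_3$ corresponds to the pair $(\epsilon_{3,1},\epsilon_{3,2})$ with $\epsilon_{3,j}=(t^7+2t^6+2t^3+t^2+1)\theta_j$.

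For the linear factor, $\theta_1$ is the Hensel lift of the simple root $r\in\F_{3^{\deg\omega_i}}$. If $c(t)=t^7+2t^6+2t^3+t^2+1$ is a unit at $\omega_i$ and $r\neq0$, we compute $c(a)\,r$ in the residue field, where $a$ is the image of $t$. We then check that it is a square in $\F_9$ (respectively $\F_{81}$), e.g.\ by testing whether its $(q-1)/2$-th power equals $1$. Hensel's lemma, applicable since $p=3$ is odd, then shows $\epsilon_{3,1}$ is a square.

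For the quadratic factor, write $f_{3,2}=x^2+bx+c$. Since $v(\Delta_{f_{3,2}})=1$, the element $\pi_2=\theta_2+b/2$ has valuation $1/2$ and is a uniformiser of the ramified extension $k_{\omega_i}(\theta_2)$. The residue field stays $\F_{3^{\deg\omega_i}}$, and $\theta_2\equiv -b/2\pmod{\pi_2}$, which is the double root of $\bar f_3$. We need $\epsilon_{3,2}$ to have even valuation, i.e.\ to be a unit. The norm condition gives a consistency check here: $N(\epsilon_3)$ is a square, and $\epsilon_{3,1}$ is a unit square. So $N(\epsilon_{3,2})$ is a square in $k_{\omega_i}$, which forces $v_{\pi_2}(\epsilon_{3,2})$ to be even. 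We then check that the reduction of $c(t)\theta_2$, namely $c(a)\cdot(-\bar b/2)$, is a nonzero square in the residue field. Hensel's lemma then finishes.

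The main obstacle is purely computational: the residue field arithmetic in $\F_{81}$ for $\omega_2$, and confirming that the relevant residues are nonzero squares. If $c(t)$ or a root happens to vanish modulo $\omega_i$, we must track valuations more carefully, rescaling $\epsilon_3$ by squares (for instance by the square class of $N(\epsilon_3)$). As in Lemma~\ref{lem:711}, we would carry out and verify these finite checks in Magma.
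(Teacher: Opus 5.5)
\textbf{Same strategy, but the check you set up would fail at $\omega_2$.} Your approach is the paper's: split $k_{\omega_i}\otimes_k k(\theta)\cong k_{\omega_i}\oplus k_{\omega_i}(\theta_2)$ with the quadratic part ramified, reduce the two components of $\epsilon_3$, and finish with Hensel's lemma (the paper just records Magma residues). The gap is that you never carry out the residue checks. Moreover, the check as you set it up, with $\epsilon_{3,j}=c(t)\theta_j$ and $c(t)=t^7+2t^6+2t^3+t^2+1$ taken from the displayed formula, fails at $\omega_2$.

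In characteristic $3$ one has $\bar f_3'(x)=2a(a+1)x+a$, so the double root is $r_2=(a+1)^{-1}$. The constant term gives $r_1r_2^2=a^2(a+1)$, i.e.\ $r_1=a^2(a+1)^3$. Modulo $\omega_2$ one computes $c(t)\equiv t^2+t+2\equiv t^6$. So both residues $c(a)r_1$ and $c(a)r_2$ lie in the square class of $a+1$. But $N_{\F_{81}/\F_3}(a+1)=\omega_2(-1)=2$ is not a square in $\F_3$. Hence $c(t)\theta_3$ is a non-square in both components.

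Your own norm sanity check would have caught this: $N_{k(\theta)/k}(c(t)\theta_3)\equiv c(t)(t+1)=(t^4+t^2+2)^2+t$ modulo squares, and this is not a square in $k$. That contradicts the assertion that $\epsilon_3$ has square norm. So the printed expression for $\epsilon_3$ is not the correct square class and cannot be used as input.

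\textbf{The repair.} Compute $\epsilon_3$ directly from the Gram matrices.
\begin{enumerate}
\item The matrix $M_0+\theta M_\infty$ is block diagonal, and the $(u_0,u_1)$-block has determinant $-d^2(\theta^2+t)$.
\item The kernel of the $(u_2,u_3,u_4)$-block has nonzero $u_3$-coordinate. The principal minor obtained by deleting $u_3$ is $\theta(\theta+t+t^2)$.
\item Since $f_3(\theta)=0$, one gets $(\theta^2+t)(\theta+t+t^2)=2t^2(t+1)$. Therefore $\epsilon_3=-2d^2t^2(t+1)\theta_3\equiv(t+1)\theta_3$.
\end{enumerate}
As a check, the same computation for $p>3$ reproduces the paper's $-\alpha(t+1)\theta$.

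With this class your template works at both primes uniformly and needs no computer. The residues are $(a+1)r_2=1$ and $(a+1)r_1=a^2(a+1)^4$. Both are nonzero squares, because $a\neq 0,-1$ modulo $\omega_i$. Hensel's lemma in $k_{\omega_i}$ and in $k_{\omega_i}(\theta_2)$ then concludes.
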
          
	
\begin{proof}
The calculations are identical in shape to the ones in Lemma \ref{lem:711} and have been verified in Magma. 
Over $k_{\omega_1}$, $f_3$ factors into a linear and a quadratic factor, $f_3=f_{3,1}f_{3,2}$.  Let $(\epsilon_{3,1},\epsilon_{3,2})$ be the image of $\epsilon_3$ under the isomorphism $k_{\omega_1}\otimes_k k[x]/f_3(x) \cong k_{\omega_1}\oplus k_{\omega_1}(\theta_2)$. Let $\pi_1$ be a uniformiser for $k_{\omega_1}$, let $a$ be a lift to $k_{\omega_1}$ of a generator of its residue field, and let $\pi_2$ be a uniformiser for $k_{\omega_1}(\theta_2)$. Then
\[
\epsilon_{3,1}\equiv 2\in \left(\oo_{k,\omega_1}/\pi_1\right)^{\times2},\qquad \epsilon_{3,2}\equiv a^2\in \left(\oo_{k,\omega_1}/\pi_2\right)^{\times2}
\]
are both  square.

The same argument holds for $k_{\omega_2}$. Over $k_{\omega_2}$, $f_3$ factors into a linear and a quadratic factor, and in the corresponding \'etale algebras,  
\[
\epsilon_{3,1}\equiv a^3+2a+2\in \left(\oo_{k,\omega_2}/\pi_1\right)^{\times2},\qquad \epsilon_{3,2}\equiv a^3+2a^2+1\in \left(\oo_{k,\omega_2}/\pi_2\right)^{\times2}
\]
are both square.
\end{proof}
	
We combine all of the above into the following Lemma~\ref{lem:goodd} and compute the Brauer--Manin obstruction on $\mathcal{G}_d$.
	
\begin{lemma}\label{lem:goodd}
Assume $d\in \F_3[t]$ satisfies the following:
\begin{enumerate}
	\item \label{cond: d is a square infinity for p=3} $d\in k_{\infty}^{\times2}$;
	\item \label{cond: dt is a square t+1 for p=3}$dt\in k_{t+1}^{\times2}$;
	\item \label{cond: dt is a square t for p=3} $dt\in k_t^{\times2}$;
	\item \label{cond: e_3 is a square at d for p=3} for all $\nu \mid d$ and $\nu \neq t$, $\epsilon_3\in (k_\nu\otimes_k k(\theta))^{\times2}$.
\end{enumerate}
Then $\mathcal{G}_d(k)=\varnothing$.	\end{lemma}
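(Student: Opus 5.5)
We follow the template of Lemma~\ref{prop: under hypothesis BMO to quad point}: we exhibit a Brauer--Manin obstruction on $\mathcal{G}_d$. Since $\epsilon_2,\epsilon_3$ are non-squares with square norms, \cite[Proposition 5.1]{CreutzViray2023} gives that $\Br(\mathcal{G}_d)/\Br_0(\mathcal{G}_d)\cong\Z/2$. It is generated by
\[
\mathcal{A}=\pi^*(dt(t-1),x^2+t)=\pi^*\operatorname{Cor}_{k(\theta_3)/k}(\epsilon_3,x-\theta_3)+(2,t).
\]
Here $(2,t)$ is the class of the fibre at infinity, by Lemma~\ref{lem: local solubility of Q_infinity for 3}. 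Since $t-1=t+2$ and $-1=2$ in $\F_3$, we can also write $dt(t-1)$ as $2dt(t+1)$ up to squares. We then compute $\inv_\nu\mathcal{A}$ place by place and show the sum is always $\tfrac12$.

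\textbf{Places of good reduction.} For $\nu\notin S$, Proposition~\ref{Gisnice} shows the evaluation is constant. Evaluating on a $k_\nu$-line of $Q_\infty$ (which exists by Lemma~\ref{lem: local solubility of Q_infinity for 3}), the argument of Lemma~\ref{constant zero evaluation} gives $\inv_\nu\mathcal{A}=0$.

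\textbf{Places dividing $d$, $\omega_1$ or $\omega_2$.} For $\nu\mid d$ with $\nu\neq t$, condition~\eqref{cond: e_3 is a square at d for p=3} kills the corestriction term. For $\nu\in\{\omega_1,\omega_2\}$, Lemma~\ref{proposition:IIIepsilon3squareink_p} does the same. In both cases $\inv_\nu\mathcal{A}=\inv_\nu(2,t)=0$, since $(2,t)$ is unramified away from $t,\infty$.

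\textbf{The place $t+1$.} Here $2dt(t+1)\equiv 2\cdot(t+1)$ modulo squares, by condition~\eqref{cond: dt is a square t+1 for p=3}. So the first representation gives $(2(t+1),x_0^2+t)_{t+1}$. The main obstacle is that $t+1$ need not be handled by $\epsilon_3$, so I would work it out directly. If $x_0$ is $(t+1)$-integral, then $x_0^2+t\equiv x_0^2-1$, and one must check this pairs trivially. I expect to instead use the second representation. Note that $t+1\mid\Delta$ only to an even power (through $\epsilon_2$), so $k(\theta_3)$ is unramified at $t+1$. I would try to show $\epsilon_3$ is a unit square there by reducing $f_3$ mod $t+1$: it becomes $x^3-x$, which splits, and $\epsilon_3$ reduces to $\theta_3\cdot(\text{const})$ at $\theta_3=0,\pm1$. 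This needs care; I would fall back to an explicit Hilbert-symbol case split on $v_{t+1}(x_0)$, together with Hensel's lemma, and expect to verify vanishing.

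\textbf{The places $\infty$ and $t$.} At $\infty$, Lemma~\ref{lem: local solubility of Q_infinity for 3} gives $x_0\neq\infty$. If $v_\infty(x_0)<0$, then $x_0^2+t$ is a square. Otherwise $x_0^2+t\equiv t$, and $2dt(t+1)\equiv 2d\equiv 2$ by condition~\eqref{cond: d is a square infinity for p=3}, since $t(t+1)$ has even degree and is monic. The symbol is then $(2,t)_\infty$, which is nonzero, so I must recheck the parity here, and possibly absorb the $(2,t)$ summand by using the second representation instead. At $t$, Lemma~\ref{lemma4 for 3} forces $x_0=ta$ with $a\in\OO_t$. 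Then $x_0^2+t=t(1+ta^2)\equiv t$, and by condition~\eqref{cond: dt is a square t for p=3} we have $2dt(t+1)\equiv 2$. So the invariant is $(2,t)_t=\tfrac12$.

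\textbf{Conclusion.} The bookkeeping at $\infty$ versus $t$ (ensuring exactly one place contributes $\tfrac12$) is the delicate point. I would settle it by consistently using whichever of the two representations makes the $\infty$ contribution visibly $0$. Summing the local invariants then gives $\tfrac12$ for every adelic point, so $\mathcal{G}_d(\mathbb{A}_k)^{\Br}=\varnothing$ and hence $\mathcal{G}_d(k)=\varnothing$.
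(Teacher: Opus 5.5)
\textbf{This is a genuine gap.} Your rewriting $dt(t-1)\equiv 2dt(t+1)$ is false. In $\F_3[t]$ one has $t-1=t+2=2(1-t)$, not $2(1+t)$, and $t\pm1$ are distinct primes, so the two elements differ modulo squares (even in $k(\sqrt{-t})$). Everything that goes wrong afterwards stems from this slip. You get a nonzero contribution $(2,t)_\infty$ at $\infty$ on top of $\tfrac12$ at $t$, and you cannot settle $t+1$, so as written the proposal does not establish the obstruction. The suggested repair, using whichever representation makes $\infty$ vanish, is not legitimate: two representations of the same element of $\Br\mathcal{G}_d$ have the same local evaluations, so a discrepancy means one of your identities is wrong.

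Handling $t+1$ via $\epsilon_3$ also cannot work. By your own reduction $f_3\equiv x^3-x \bmod (t+1)$, we have $k_{t+1}\otimes k(\theta_3)\cong k_{t+1}^3$. The printed constant factor of $\epsilon_3$ is $\equiv 1 \bmod (t+1)$, so up to unit squares the components of $\epsilon_3$ are the roots $\theta\equiv 0,1,-1$. The first has valuation $1$ and the last has residue $-1\notin\F_3^{\times2}$, so $\epsilon_3$ is not a local square there.

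With the representative $dt(t-1)$ you started from, the paper's argument is immediate at the special places:
\begin{itemize}
\item At $\infty$: $d$ is a square by (1) and $t(t-1)$ is monic of even degree, so $\epsilon_2\in k_\infty^{\times2}$.
\item At $t+1$: $dt$ is a square by (2) and $t-1\equiv 1\bmod (t+1)$, so again $\epsilon_2$ is a local square.
\item At $t$: (3) leaves $\epsilon_2\equiv t-1$, a unit with residue $-1$, so $(t-1,t(1+ta^2))_t=\tfrac12$.
\end{itemize}
Your treatment of the good places, of $\nu\mid d$ and of $\omega_1,\omega_2$ agrees with the paper.

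One caveat bears on your unease about $t+1$. Compute $\epsilon_2$ directly from the printed $Q_0,Q_\infty$ by restricting $Q_0+\sqrt{-t}\,Q_\infty$ to $u_1=0$, which misses the vertex $(\sqrt{-t}:1:0:0:0)$. This gives $\epsilon_2=d\cdot t f_3(\sqrt{-t})=dt^3(t+1)$, i.e.\ $dt(t+1)$, not $dt(t-1)$. If that is right, then under (3) the class is a square at $t$, so the $\tfrac12$ there disappears. At $t+1$ the invariant is $0$ on lines of $Q_\infty$ but $\tfrac12$ on lines of $Q_0-(t+1)Q_\infty$, which has $k_{t+1}$-lines when (2) holds. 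So $\mathcal{A}$ would not obstruct under (1)--(4). Your proposal does not resolve this either.
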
       	
	
\begin{proof}
It suffices to prove $\mathcal{G}_d$ has a Brauer--Manin obstruction to the Hasse principle. The Brauer group $\Br(\mathcal{G}_d)/\Br_0(\mathcal{G}_d)\cong \Z/2\Z$ is generated by 
\begin{equation}\label{eq: first rep for brauer group for p=3}
\mathcal{A}:=\pi^*(\epsilon_2,x^2+t),  
\end{equation}
which can be also written in the form 
\begin{equation}\label{eq: second rep for brauer group for p=3}
\mathcal{A}=\pi^*\operatorname{Cor}_{k(\theta)/k}(\epsilon_3,x-\theta)+[\left(\mathcal{G}_d\right)_\infty]\in \Br(\mathcal{G}_d),
\end{equation}  
where by Lemma~\ref{lem: local solubility of Q_infinity for 3} we have
\begin{equation*}
[\left(\mathcal{G}_d\right)_\infty]=(2,t).
\end{equation*}
		
By Proposition~\ref{proposition: good reduction at a place imples constant inv map}, $\inv_\nu\mathcal{A}$ is constant for all $\nu\notin S$. Let $Q$ be a $k_\nu$-rational line on $Q_\infty$ (which exists by Lemma~\ref{lem: local solubility of Q_infinity for 3}). In particularity $\pi(Q)=[1:0]$. Changing the representation of $\mathcal{A}$ we have
\begin{align*}
\mathcal{A}=\pi^*(\epsilon_2, x^2+t)=\pi^*(\epsilon_2, (x^2+ty^2)/y^2) = \pi^*(\epsilon_2, (x^2+ty^2)/x^2) \in \pi^*(\Br(k(\PP^1))),
\end{align*}
and hence
\begin{equation*}
\inv_\nu\mathcal{A}(Q)=\inv_\nu Q^*\pi^*(\epsilon_2, (x^2+ty^2)/x^2)=\inv_\nu\pi^*(\epsilon_2, 1)=0.
\end{equation*}
Now consider $\nu \in S$ and recall that $\epsilon_2=dt(t-1)$.
		
For $\nu=\infty$, since $d\in k_\infty^{\times2}$ and $t(t-1)\in k_{\infty}^{\times2}$, we have $\inv_\infty\mathcal{A}= 0$. 
		
For $\nu=t+1$, since $dt\in k_{t+1}^{\times2}$ and $(t-1)\in k_{t+1}^{\times2}$, we have $\inv_{t+1}\mathcal{A}= 0$. 
		
For $\nu=t$, by Lemma~\ref{lemma4 for 3}, if $Q\in\mathcal{G}_d(k_t)$ then $\pi(Q)=[ta:1]$, for $a\in \OO_{t}$. In particular, since $dt\in k_t^{\times2}$, we have
\begin{equation*} \inv_t\mathcal{A}(Q)=\inv_t\pi^*\left(t-1,t^2a^2+t\right)=\inv_t\pi^*\left(t-1,t\right)+\inv_t\pi^*\left(t-1,ta^2+1\right)=\frac{1}{2}.
\end{equation*}
	
For $\nu\mid d$ and $\nu \neq t$, condition (4) and ~\eqref{eq: second rep for brauer group for p=3} give $\inv_\nu\mathcal{A}=\inv_\nu(2,t)= 0$. 
		
For $\nu\mid \xi$,   $\epsilon_3$ is a square in $k_\nu\otimes k(\epsilon_3)$ by Lemma~\ref{proposition:IIIepsilon3squareink_p}. By ~\eqref{eq: second rep for brauer group for p=3} again we have $\inv_\nu\mathcal{A}=\inv_\nu(2,t)= 0$. 
		
Therefore for any $(Q_\nu)_{\nu}\in\prod_{\nu\in\Omega_k}\mathcal{G}_d(k_\nu)$ we have
\begin{equation*}
\sum_{\nu\in\Omega_k}\inv_\nu\mathcal{A}(Q_\nu)=\frac{1}{2},
\end{equation*}
and hence there is a Brauer--Manin obstruction to the Hasse principle on $\mathcal{G}_d$.
\end{proof}

\subsection{Existence of $X_d$ for $p=3$}
	
\begin{lemma}\label{lem:applyingcheb}
There exist infinitely many prime elements $D\in \F_3[t]$ such that $d=Dt$ satisfies the conditions of Lemma \ref{lem:goodd}. 
\end{lemma}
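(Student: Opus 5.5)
The plan is to take $D$ to be a monic irreducible polynomial of odd degree and to translate each condition of Lemma~\ref{lem:goodd} for $d=Dt$ into a splitting condition for $(D)$ in some fixed Galois extension of $k=\F_3(t)$. Chebotarev's density theorem (Theorem~\ref{theorem:CDTforGFFs}) applied with $C=1$ then produces infinitely many such $D$.

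\emph{Translating the conditions.}
\begin{enumerate}
\item Condition~\eqref{cond: d is a square infinity for p=3} asks that $Dt\in k_\infty^{\times2}$. This holds as soon as $\deg(Dt)$ is even and the leading coefficient of $D$ is a square in $\F_3$. So it suffices that $D$ is monic of odd degree.
\item Condition~\eqref{cond: dt is a square t for p=3} asks that $Dt^2\in k_t^{\times 2}$, i.e.\ $\left(\frac{D}{t}\right)=1$. By Theorem~\ref{theorem:reciprocity}, with $D,t$ monic and $\deg D$ odd, we get $\left(\frac{D}{t}\right)=\left(\frac{-1}{3}\right)\left(\frac{t}{D}\right)=\left(\frac{-t}{D}\right)$. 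So the condition holds if $(D)$ splits in $k(\sqrt{-t})$.
\item Condition~\eqref{cond: dt is a square t+1 for p=3} asks that $Dt^2\in k_{t+1}^{\times 2}$, i.e.\ $\left(\frac{D}{t+1}\right)=1$. The same computation shows this holds if $(D)$ splits in $k(\sqrt{-(t+1)})$.
\item Condition~\eqref{cond: e_3 is a square at d for p=3} concerns only $\nu=(D)$, once $D\neq t$. It holds if $(D)$ splits completely in $k(\sqrt{\epsilon_3})^{\Gal}$, since then $k_D\otimes k(\theta_3)\cong k_D^3$ and each component of $\epsilon_3$ is a square.
\end{enumerate}
We therefore set $L:=k(\sqrt{\epsilon_3})^{\Gal}\bigl(\sqrt{-t},\sqrt{-(t+1)}\bigr)$. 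Any prime of odd degree that splits completely in $L$, with monic generator $D$, and that avoids the finitely many primes $t$, $t+1$, $\omega_1$, $\omega_2$, satisfies all four conditions.

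\emph{Applying Chebotarev.} To use Theorem~\ref{theorem:CDTforGFFs} we need $L/k$ to be geometric. The first step is to show $\Gal(k(\sqrt{\epsilon_3})^{\Gal}/k)\cong S_4$ by the argument of Lemma~\ref{galois group is S_4}. That argument only uses two facts: $N(\epsilon_3)$ is a square, which is stated above, and $\Delta_{f_3}=t^3\omega_1\omega_2$ is not a square in $k$. Then the unique quadratic subfield of this $S_4$-extension is $k(\sqrt{t\omega_1\omega_2})$. Hence the maximal abelian subextension of $L/k$ is
\[
k\bigl(\sqrt{t\omega_1\omega_2},\sqrt{-t},\sqrt{-(t+1)}\bigr),
\]
which has Galois group $C_2^3$. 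As in Lemma~\ref{lemma:geometric}, any constant subextension is abelian, so it would have to contain $k(\sqrt{-1})$, the quadratic constant extension ($-1$ is a nonsquare in $\F_3$). This happens only if some nonempty product of $t\omega_1\omega_2$, $-t$, $-(t+1)$ is, modulo $k^{\times2}$, the constant $-1$. Comparing the prime factorizations shows that no nonempty product is a constant, so $L/k$ is geometric. Theorem~\ref{theorem:CDTforGFFs} then gives, for every sufficiently large odd $N$, a prime of degree $N$ that splits completely in $L$. Taking its monic generator $D$ gives infinitely many $D$ as required.

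The main obstacle is the geometricity and disjointness check. It relies on the $S_4$ computation, which requires confirming that $\Delta_{f_3}$ is a nonsquare for this $p=3$ family, and on verifying that $\sqrt{-t}$ and $\sqrt{-(t+1)}$ are independent of $\sqrt{t\omega_1\omega_2}$ and of the constant field. Here I would first check, via the factorizations above, that the three radicands generate a subgroup of $k^\times/k^{\times2}$ of rank $3$ containing no constant class.
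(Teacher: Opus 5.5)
Your proposal is correct and follows essentially the same route as the paper: Chebotarev (Theorem~\ref{theorem:CDTforGFFs}) in the compositum of the $S_4$-extension $k(\sqrt{\epsilon_3})^{\Gal}$ with a biquadratic field, with function-field reciprocity turning the square conditions at $t$ and $t+1$ into splitting conditions on $(D)$. The only difference is cosmetic, and your identification of the quadratic subfield as $k(\sqrt{t\omega_1\omega_2})$ is the correct one: you ask $(D)$ to split in $k(\sqrt{-t},\sqrt{-(t+1)})$, while the paper asks it to be inert in $k(\sqrt{t})$ and $k(\sqrt{t+1})$, and these agree because $-1$ is a nonsquare modulo every odd-degree prime of $\F_3[t]$.
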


\begin{proof}
Since $k(\sqrt{\epsilon_3})^{\Gal}\cap k(\sqrt{t},\sqrt{t+1})=k$, we have that
\begin{align*}
&\Gal\left(k\left(\sqrt{t},\sqrt{t+1},\sqrt{\epsilon_3}\right)^{\Gal}/k\right)\\
\cong &\Gal\left( k\left(\sqrt{t},\sqrt{t+1}\right)/k\right)\times \Gal\left(k(\sqrt{\epsilon_3})^{\Gal}/k \right) \\
\cong &\Gal\left( k\left(\sqrt{t}\right)/k\right)\times \Gal\left( k\left(\sqrt{t+1}\right)/k\right)  \times \Gal\left(k(\sqrt{\epsilon_3})^{\Gal}/k \right).
\end{align*}

Consider $\tau=(\sigma_1,\sigma_2,1)$ in $\Gal\left(k\left(\sqrt{t},\sqrt{t+1},\sqrt{\epsilon_3}\right)^{\Gal}/k\right)$, where $\sigma_1$ generates the Galois group of $k(\sqrt{t})/k$ and $\sigma_2$ generates the Galois group of $k(\sqrt{t+1})/k$. One can verify in Magma the extension $k\left(\sqrt{\epsilon_3}\right)^{\Gal} /k$ is separable with Galois group $S_4$. It has a unique quadratic subextension with discriminant $(t^2+2t+2)(t^4+2t^3+t+1)$, and thus $k\left(\sqrt{t},\sqrt{t+1},\sqrt{\epsilon_3}\right)^{\Gal}/k$ is geometric. 
        
Applying Chebotarev's density theorem to the class $[\tau]$, we see there are infinitely many irreducible polynomials $D\in\F_3[t]$ satisfying the following:
\begin{enumerate}[label=(\roman*)]
	\item \label{cond: D odd and monic p=3} $D$ is odd degree and monic; 
	\item \label{cond: D inert k(sqrt(t+1)) p=3}$(D)$ is inert in $k(\sqrt{t+1})$;
	\item \label{cond: D inert in k(sqrt(t) and p=3} $(D)$ is inert in $k(\sqrt{t})$;
    \item \label{cond: D split in k(sqrt(ep_3)) p=3} $(D)$ splits completely in $k(\sqrt{\epsilon_3})^{\Gal}$.
\end{enumerate}
By Hensel's Lemma we have that $Dt$ is a square in $k_{\infty}$. By Theorem~\ref{theorem:reciprocity} we have that $D\in k_{t+1}^{\times 2}$ if and only if
\[
1=\left(\frac{D}{t+1}\right)=(-1)^{}\left(\frac{t+1}{D}\right).
\]
Thus condition~\eqref{cond: dt is a square t+1 for p=3} of Lemma~\ref{lem:goodd} is satisfied. Similarly, we have that $D\in k_t^{\times 2}$ if and only if
\[
1=\left(\frac{D}{t}\right)=(-1)\left(\frac{t}{D}\right).
\]
Therefore condition~\eqref{cond: dt is a square t for p=3} of Lemma~\ref{lem:goodd} is satisfied. Thus conditions \ref{cond: D odd and monic p=3},\ref{cond: D inert k(sqrt(t+1)) p=3},\ref{cond: D inert in k(sqrt(t) and p=3} and \ref{cond: D split in k(sqrt(ep_3)) p=3} imply the conditions of Lemma~\ref{lem:goodd} are satisfied, and hence the result holds.
\end{proof}

\begin{example}
Consider the prime $D=t(t+1)^2+1$ and $d=Dt$. It is clear that conditions $\eqref{cond: d is a square infinity for p=3},\eqref{cond: dt is a square t+1 for p=3}$ and $\eqref{cond: dt is a square t for p=3}$ of Lemma \ref{lem:goodd} hold, and one may check condition $\eqref{cond: e_3 is a square at d for p=3}$ in Magma. Thus $X_d$ is a quartic del Pezzo surface over $\F_3(t)$ with no quadratic points.
\end{example}
	
\section{Proof of Theorem~\ref{thm:maindp4}}\label{sec:proofoftheorem3}

\begin{proof}[Proof of Theorem~\ref{thm:maindp4}]
If two quartic del Pezzo surfaces, say $X_d,X_{d'}$, are isomorphic, then they are related by an invertible change of coordinates. If $p>3$, take $d=\frac{D}{\alpha t (t+1)}$ and $d'=\frac{D'}{\alpha t (t+1)}$ for any two distinct primes coming from Lemma~\ref{lem:applyingcheb for not 2,3,7,11} in the case $p\notin\{7,11\}$, and Lemma~\ref{lem: p =3,11 cheb} in the case $p\in\{7,11\}$. In both cases there exist infinitely many, all of which satisfy the hypothesis of Lemma~\ref{prop: under hypothesis BMO to quad point}. If  $p=3$, then let $d=Dt$ and $d'=D't$ for any two distinct primes coming from Lemma~\ref{lem:applyingcheb}, of which there exist infinitely many, all of which satisfy the hypothesis of Lemma~\ref{lem:goodd}.\par
Each of $X_d$ and $X_{d'}$ are contained in exactly two rank 4 quadrics defined over $k(\sqrt{-t})$, with discriminants $D, D'\in k(\sqrt{-t})^{\times}/k(\sqrt{-t})^{\times 2}$ if $p>3$, and discriminants $D(t-1), D'(t-1)$, if $p=3$. Since linear automorphisms of $\PP^4$ preserve discriminants of any quadric hypersurface, it suffices to prove $DD'\notin k(\sqrt{-t})^{\times 2}$.  Indeed, if $DD'\in k(\sqrt{-t})^{\times 2}$, then we would have
\begin{equation*}
	DD'=a^2+2\sqrt{-t}ab-tb^2,
\end{equation*}
for some $a,b\in \OO_{k}$. Since $D,D'\in k^{\times}$, we have that $a=0$ or $b=0$. Since $DD',-DD't\not\in k^{\times2}$, as $D,D'$ are distinct primes, each of these surfaces are non-isomorphic. The result then follows from Lemma~\ref{lem:quadpointrationalline}.
\end{proof}
	
\section{How we chose this family}\label{sec:howwechose}
To find the families, we begin with a generic $Q_0$ and $Q_\infty$ in a similar shape to the one found in \cite{creutz2024quartic}. Namely
\[
M_0=\begin{pmatrix}
		d      & 0      & 0   & 0   & 0   \\ 
		0      & -d\,t  & 0   & 0   & 0   \\ 
		0      & 0      & a_2 & a_3 & 0   \\ 
		0      & 0      & a_3 & 0   & a_4 \\ 
		0      & 0      & 0   & a_4 & 0
    \end{pmatrix},
	\qquad
	M_\infty=\begin{pmatrix}
		0   & -d  & 0   & 0   & 0   \\ 
		-d  & 0   & 0   & 0   & 0   \\ 
		0   & 0   & b_1 & 0   & b_3 \\ 
		0   & 0   & 0   & b_2 & 0   \\ 
		0   & 0   & b_3 & 0   & b_4
	\end{pmatrix}.
	\]
The characteristic polynomial of the associated quartic del Pezzo surface $X$ is
\[
	f(x) \;=\; (x^2 + t)\,\Biggl(
	x^3
	+ \frac{a_2 b_4}{b_1 b_4 - b_3^2}\,x^2
	+ \frac{-a_3^2 b_4 + 2\,a_3 a_4 b_3 - a_4^2 b_1}{b_1 b_2 b_4 - b_2 b_3^2}\,x
	- \frac{a_2 a_4^2}{b_1 b_2 b_4 - b_2 b_3^2}
	\Biggr).
\]
    
Evaluating $M_0+xM_\infty$ at $x=\sqrt{-t}$ gives
\begin{align*}
\epsilon_2&=\det\begin{pmatrix}
			d      & -d\sqrt{-t}      &    & 0   & 0   \\ 
			-d\sqrt{-t}     & -d\,t  & 0   & 0   & 0   \\ 
			0      & 0      & a_2+\sqrt{-t}b_1 & a_3 & \sqrt{-t}b_3   \\ 
			0      & 0      & a_3 & \sqrt{-t}b_2   & a_4 \\ 
			0      & 0      & \sqrt{-t}b_3   & a_4 & \sqrt{-t}b_4
		\end{pmatrix}\\
		&= df_3(\sqrt{-t})\in k(\sqrt{-t})^{\times}/k(\sqrt{-t})^{\times2}.
\end{align*}
By imposing the $x$ coefficient of $f_3$ to be equal to $t$, we ensure $\epsilon_2$ is rational, which makes the Brauer--Manin obstruction easier to compute. In particular, this is equivalent to
\[
\frac{-a_3^2b_4+2a_3a_4b_3-a_4^2b_1}{b_2(b_1b_4-b_3^2)}=t.
\]
	
We also want the local invariant at $t$ to be constantly equal to $1/2$. In order to ensure this, we want $BQ_0+Q_\infty$ to satisfy the hypothesis of Lemma~\ref{lemma2} and the associated conic to not have a $k_t$-point. The pencil of quadrics $BQ_0+Q_{\infty}$ is the orthogonal sum of a rank 2 form and a rank 3 form which diagonalises as 
\[
\beta_1u_1^2+\beta_2u_2^2+\beta_3 u_3^2=0,
\]
where
	
\begin{align*}
\beta_1 &= B a_2 + b_1, \\
\beta_2 &= \frac{-B^2 a_3^2 + B a_2 b_2 + b_1 b_2}{B a_2 + b_1}, \\
\beta_3 &= \frac{B^3 a_2 a_4^2 + B^2 a_3^2 b_4 - 2 B^2 a_3 a_4 b_3 + B^2 a_4^2 b_1- B a_2 b_2 b_4 - b_1 b_2 b_4 + b_2 b_3^2}{B^2 a_3^2 - B a_2 b_2 - b_1 b_2}.
\end{align*}
For the following choice of coefficients:
\[
\begin{aligned}
	a_2 &= t^2 + t^3,\quad & a_3 &= \alpha\,t,\quad & a_4 &= t^2,\\
	b_1 &= \dfrac{\alpha}{t},\quad & b_2 &= t^2,\quad & b_3 &= \alpha,\quad & b_4 &= \dfrac{(3\alpha  - 1)t}{1 + \alpha},
	\end{aligned}
\]
and for $B\in \oo_t$, the Hilbert symbol associated to this conic is
\[
\left(-b_1b_2,-b_1b_4+b_3^2\right)_t=\left(-\alpha t,\alpha(\alpha+1)\right)_t,
\]
which is non-trivial.  

\raggedbottom 
 
\bibliography{thebibliography}

@article{colliot,
  author       = {Colliot--Th{\'e}l{\`e}ne, Jean--Louis},
  title        = {Retour sur l'arithm{\'e}tique des intersections de deux quadriques, avec un appendice par {A}. {K}uznestov},
  journal      = {Journal f{\"u}r die reine und angewandte Mathematik},
  volume       = {806},
  pages        = {147--185},
  year         = {2024},
  doi          = {10.1515/crelle-2023-0081},
  url          = {https://doi.org/10.1515/crelle-2023-0081}
}

@article{creutz2024quartic,
  author  = {Creutz, Brendan and Viray, Bianca},
  title   = {Quartic del {P}ezzo surfaces without quadratic points},
  journal = {Pacific Journal of Mathematics},
  volume  = {337},
  number  = {2},
  pages   = {215--224},
  year    = {2025},
  doi     = {10.2140/pjm.2025.337.215},
  url     = {https://doi.org/10.2140/pjm.2025.337.215}
}

@article{CreutzViray2023,
  author    = {Brendan Creutz and Bianca Viray},
  title     = {Quadratic points on intersections of two quadrics},
  journal   = {Algebra \& Number Theory},
  volume    = {17},
  number    = {8},
  pages     = {1411--1451},
  year      = {2023},
  doi       = {10.2140/ant.2023.17.1411},
  url       = {https://msp.org/ant/2023/17-8/ant-v17-n8-p03-p.pdf}
}

@book{ColliotThelene2021,
  author    = {Jean--Louis Colliot--Th{\'e}l{\`e}ne and Alexei N. Skorobogatov},
  title     = {The Brauer--Grothendieck Group},
  year      = {2021},
  publisher = {Springer},
  address   = {Cham},
  series    = {Ergebnisse der Mathematik und ihrer Grenzgebiete. 3. Folge / A Series of Modern Surveys in Mathematics},
  volume    = {71},
  isbn      = {978-3-030-74247-8},
  doi       = {10.1007/978-3-030-74248-5},
  url       = {https://link.springer.com/book/10.1007/978-3-030-74248-5}
}

@book{milne1980etale,
  author    = {Milne, James S.},
  title     = {\'Etale Cohomology},
  series    = {Princeton Mathematical Series},
  volume    = {33},
  year      = {1980},
  publisher = {Princeton University Press},
  address   = {Princeton, NJ},
  isbn      = {0691171106}
}

@book{rosen,
author = {Rosen, Michael},
address = {New York, NY},
booktitle = {Number Theory in Function Fields},
edition = {1st ed. 2002.},
isbn = {1-4757-6046-9},
language = {eng},
publisher = {Springer New York},
series = {Graduate Texts in Mathematics, 210},
title = {Number Theory in Function Fields },
year = {2002},
}

@phdthesis{Amer1976,
  author       = {Amer, Mahmoud},
  title        = {Quadratische formen über funktionenkörpern},
  school       = {Johannes Gutenberg Universität, Mainz},
  year         = {1976},
  note         = {Unpublished dissertation}
}

@phdthesis{reid1972complete,
  author       = {Reid, Miles},
  title        = {The complete intersection of two or more quadrics},
  school       = {Trinity College, Cambridge},
  year         = {1972},
  month        = {June},
  type         = {PhD thesis},
  url          = {https://mreid.warwick.ac.uk/3folds/qu.pdf},
}

@article{Brumer1978,
  author       = {Armand Brumer},
  title        = {Remarques sur les couples de formes quadratiques},
  journal      = {Comptes Rendus de l'Acad\'emie des Sciences. S\'erie A-B},
  year         = {1978},
  volume       = {286},
  number       = {16},
  pages        = {A679--A681}
}

@book{Wittenberg2007,
  author    = {Olivier Wittenberg},
  title     = {Intersections de deux quadriques et pinceaux de courbes de genre 1},
  series    = {Lecture Notes in Mathematics},
  volume    = {1901},
  publisher = {Springer},
  address   = {Berlin},
  year      = {2007},
  language  = {French},
  mrnumber  = {2307807}
}

@article{Odoni1985GaloisIterates,
  author  = {Odoni, Robert W.K.},
  title   = {The {G}alois theory of iterates and composites of polynomials},
  journal = {Proceedings of the London Mathematical Society},
  series  = {Series 3},
  volume  = {51},
  number  = {3},
  pages   = {385--414},
  month   = {Nov},
  year    = {1985},
  doi     = {10.1112/plms/s3-51.3.385},
}

@misc{GroupNames,
    title = {Group{N}ames database},
    author = {Dokchitser, Tim},
    url     = {https://people.maths.bris.ac.uk/~matyd/GroupNames/1/e3/S3byC2%5E2.html#s1},
    note = {Accessed: 12-02-2026}
}

@article{SPRINGER1956238,
title = {Quadratic forms over Fields with a Discrete Valuation {II}. {N}orms},
journal = {Indagationes Mathematicae (Proceedings)},
volume = {59},
pages = {238-246},
year = {1956},
issn = {1385-7258},
doi = {https://doi.org/10.1016/S1385-7258(56)50031-5},
url = {https://www.sciencedirect.com/science/article/pii/S1385725856500315},
author = {Springer, Tonny Albert}
}

@article{Tian2025LocalGlobal,
  author    = {Tian, Zhiyu},
  title     = {Local-global principle and integral {T}ate conjecture for certain varieties},
  journal   = {Journal of the American Mathematical Society},
  volume    = {38},
  number    = {3},
  year      = {2025},
  pages     = {703--782},
  doi       = {10.1090/jams/1054},
  publisher = {American Mathematical Society}
}
	
\bibliographystyle{alpha}

\end{document}